\newcommand{\keywords}[1]
{\noindent
  {\small	
  \textbf{\text{Keywords: }} #1}
}
\newcommand{\subjclass}[2][1991]
{\noindent
  {\small	
  \textbf{\text{MSC 2010 Subject Classification: }} #2}
}
\newtheorem{thm}{Theorem}[section]
\newtheorem{prop}[thm]{Proposition}
\newtheorem{lem}[thm]{Lemma}
\newtheorem{assumption}[thm]{Assumption}
\theoremstyle{remark}
\newtheorem{rmk}[thm]{Remark}
\newcommand{\no}{\nonumber}
\newcommand{\lf}{\left}
\newcommand{\rh}{\right}
\newcommand{\dr}{\partial}
\newcommand{\gr}{\nabla}
\newcommand{\eps}{\varepsilon}
\newcommand{\D}{\Delta}
\newcommand{\e}{\eta}
\newcommand{\Xb}{{\overline{X}}}
\newcommand{\Yb}{{\overline{Y}}}
\newcommand{\Zb}{{\overline{Z}}}
\newcommand{\Cb}{\overline{C}}
\newcommand{\pb}{{\overline{p}}}
\newcommand{\mub}{{\overline{\mu}}}
\renewcommand{\th}{\tilde{h}}
\newcommand{\tgr}{\tilde{\nabla}}
\newcommand{\tR}{\tilde{R}}
\newcommand{\tb}{\tilde{\beta}}
\newcommand{\E}{\mathbb{E}}
\renewcommand{\P}{\mathbb{P}}
\newcommand{\R}{\mathbb{R}}
\renewcommand{\O}{{\mathcal{O}}}
\newcommand{\CP}{\mathcal{P}}
\newcommand{\M}{\mathcal{M}}
\newcommand{\F}{{\mathcal{F}}}
\newcommand{\CL}{{\mathcal{L}}}
\newcommand{\C}{\mathscr{C}}
\renewcommand{\L}{\mathscr{L}}
\renewcommand{\d}{{\mathrm{d}}}
\newcommand{\KL}{{\mathrm{KL}}}
\newcommand{\TV}{{\mathrm{TV}}}
\newcommand{\tr}{{\mathrm{tr}}}
\newcommand{\T}{{\mathrm{T}}}
\newcommand{\rank}{{\mathrm{rank}}}
\numberwithin{equation}{section}
\title{Convergence of Simulated Annealing Using Kinetic Langevin Dynamics}
\author{
Xuedong He\thanks{Department of Systems Engineering and Engineering Management, The Chinese University of Hong Kong.
\href{mailto:xdhe@se.cuhk.edu.hk}{xdhe@se.cuhk.edu.hk}
}
\and Xiaolu Tan\thanks{Department of Mathematics, The Chinese University of Hong Kong.
\href{mailto:xiaolu.tan@cuhk.edu.hk.}{xiaolu.tan@cuhk.edu.hk.}
}
\and Ruocheng Wu\thanks{Department of Systems Engineering and Engineering Management, The Chinese University of Hong Kong.
\href{mailto:rcwu@se.cuhk.edu.hk.}{rcwu@se.cuhk.edu.hk.}
}
}
\date{}
\begin{document}

\maketitle
\thispagestyle{fancy}

\begin{abstract}

	We study the simulated annealing algorithm based on the kinetic Langevin dynamics, in order to find the global minimum of a non-convex potential function.
	For both the continuous time formulation and a discrete time analogue, we obtain the convergence rate results under technical conditions on the potential function,
	together with an appropriate choice of the cooling schedule and the time discretization parameters.

\end{abstract}

\keywords{Simulated annealing, kinetic Langevin dynamics, hypocoercivity, discretization, convergence rate.}

\subjclass{60J25, 60J60, 46N30}
%


%
%
%
%
%
%
%
%


\section{Introduction}
	
	Simulated annealing has always been an important method to find the global minimum of a given function $U: \R^d  \longrightarrow \R$, especially when $U$ is non-convex.
	Classical studies on the simulated annealing have been mainly focused on an algorithm based on the overdamped Langevin dynamic:
	\begin{equation} \label{eq:ol}
		\d X_t = -\gr U(X_t)\d t + \sqrt{2\eps_t} \d B_t,
	\end{equation}
	where $(B_t)_{t \ge 0}$ is a standard $d$-dimensional Brownian motion and $(\eps_t)_{t \ge 0}$ is a temperature parameter that turns to $0$ as $t \longrightarrow \infty$.
	Notice that, with constant temperature $\eps_t \equiv \eps$ and under mild conditions on $U$,
	the process $X$ in \eqref{eq:ol} is the standard overdamped Langevin dynamic and has the invariant measure $\mu^*_{o, \eps}(\d x) \propto \exp\left(-U(x)/\eps \right) \d x $.
	With small $\eps > 0$, samples from $\mu^*_{o,\eps}$ would approximately concentrate around the global minimum of function $U$,
	which is the intuition of the simulated annealing algorithm.

	Since the introduction of the simulated annealing algorithm by Kirkpatrick, Gellatt and Vecchi \cite{KirkpatrickGelattVecchi}, many works have been devoted to the convergence analysis of \eqref{eq:ol}; see e.g., Geman and Hwang \cite{GemanHwang86}, Chiang, Hwang and Sheu \cite{ChiangHwangSheu87}, Royer \cite{RoyerSICON89}, Holley, Kusuoka and Stroock \cite{HolleyStroockJFA89}, Miclo \cite{Miclo92}, Zitt \cite{ZittSPA08}, Fournier and Tardif \cite{FournierTardifJFA21}, Tang and Zhou \cite{TangZhou21}, etc.
	It has been shown that, the cooling schedule $\eps_t$ should be at least of the order $\frac{E}{\log t}$ as $t \longrightarrow \infty$ for some constant $E > 0$, in order to ensure the convergence of $X_t$ to the global minimum of $U$ as $t\longrightarrow \infty$.
	Intuitively, this cooling schedule allows the diffusion process to have enough time to escape from the local minima and at the same time to explore the whole space in order to find the global minimum of $U$; finally, the annealing process will ``freeze'' at the global minimum of $U$ as $\eps_t \longrightarrow 0$.
	We would like to mention in particular the recent paper by Tang and Zhou \cite{TangZhou21}, where the authors derived a convergence rate result of \eqref{eq:ol},
	where a fine estimation of the log-Sobolev inequality in Menz and Schlichting \cite{MenzSchlichtingAOP14} for invariant measure $\mu^*_{\eps,o}$ with low-temperature (small $\eps > 0$) has been crucially used.
	Moreover, they have also analyzed a corresponding discrete time scheme of \eqref{eq:ol} and obtained a convergence rate result.
	Notice that in practice it is the discrete time scheme which is implemented to find the optimizer of $U$.

	Motivated by the above works, we will study in this paper the simulated annealing based on the kinetic (underdamped) Langevin dynamic,
	that is, the process $(X,Y) = (X_t, Y_t)_{t \ge 0}$ defined by
	\begin{equation} \label{eq:kl}
		X_t = X_0 + \!\int_0^t\! Y_s \d s,
		\;\;\;\;
		Y_t = Y_0 - \int_0^t\! \gr_x U(X_s)\d s - \int_0^t \! \theta Y_s\d s + \int_0^t \!\sqrt{2\eps_s} \d B_s,
	\end{equation}
	where $\theta > 0$ is a fixed constant and $(\eps_t)_{t \ge 0}$ is a cooling schedule satisfying $\eps_t \longrightarrow 0$ as $t \longrightarrow \infty$.
	Moreover, we will also study a discrete time scheme of \eqref{eq:kl}.
	More precisely, consider a sequence $(\D t_k)_{k \ge 0}$ of time steps and define the discrete time grid $0 = T_0 < T_1 < \cdots $ by
	$$
		T_k
		~:=~
		\sum_{j=0}^{k-1} \D t_j .
	$$
	The discrete time scheme will be defined on the grid $(T_k)_{ k \ge 0}$.
	For ease of presentation and the convergence analysis later, we will write this scheme as a continuous time process $(\Xb, \Yb) = (\Xb_t, \Yb_t)_{t \ge 0}$ by using the time freezing function $\e(t) := \sum_{k=0}^{\infty} T_k \mathbf{1}_{\{t\in[T_k, T_{k+1})\}}$.
	Then, the discrete time scheme process $(\Xb, \Yb) $ is defined by
	\begin{equation} \label{eq:diskl}
		\overline{X}_t = X_0 + \int_0^t\! \overline{Y}_s \d s,
		\;\;\;\;
		\overline{Y}_t = Y_0 + \int_0^t \!\! \lf( -\gr_x U(\Xb_{\e(s)}) - \theta \Yb_s \rh) \d s + \int_0^t \!\! \sqrt{2\eps_{\e(s)}} \d B_s  .
	\end{equation}
	Notice that the above scheme is the second-order scheme, rather than the Euler scheme of \eqref{eq:kl}.
	This scheme can be explicitly re-written on the time grid $(T_k)_{k \ge 0}$ and hence is implementable (see \eqref{eq:xbtybt} and \eqref{eq:implement} for details). This second-order scheme has also been introduced and studied for  standard underdamped Langevin dynamic, i.e., for \eqref{eq:kl} with constant temperature $\eps_t \equiv \eps_0$; see, e.g., Cheng, Chatterji, Bartlett and Jordan \cite{ChengCOLT18}, Zou, Xu and Gu \cite{ZouXuGuNIPS19}, Gao, G{\"u}rb{\"u}zbalaban and Zhu \cite{GaoZhuOR21} and Ma, Chatterji, Cheng, Flammarion, Bartlett and Jordan \cite{MaBEJ21}.

	For the kinetic simulated annealing process $(X,Y)$ in \eqref{eq:kl}, a convergence result without convergence rate has already been established by Journel and Monmarch\'e \cite{JournelMonmarche21}.
	In the present paper, we aim at obtaining some convergence rate results for both the simulated annealing process $(X,Y)$ in \eqref{eq:kl} and the discrete time scheme $(\Xb, \Yb)$ in \eqref{eq:diskl}.
	To the best of our knowledge, we are the first to study the convergence of the kinetic simulated annealing algorithm in the discrete time framework.
	Let us also mention that, by cooling the parameter $\theta$ instead of $\eps$ in \eqref{eq:kl}, 
	Monmarch\'e \cite{MonmarchePTRF18} studied an alternative kinetic simulated annealing process and derived a convergence rate for it (see Remark \ref{rem:compare_monmarche} in the following for a detailed comparison of his work and ours).

	The remainder of the paper is organized as follows.
	We first introduce some notations.
	In Section~\ref{s:main}, we state the assumptions and our main results,
	and provide the main idea of the proofs, together with some discussions on the related literature.
	The proof of the convergence rate of \eqref{eq:kl} is given in Section~\ref{s:conSA}, and the convergence rate of \eqref{eq:diskl} is given in Section~\ref{s:disSA}.

	\noindent {\bf Notations.} $\mathrm{(i)}$ Denote by $C^{\infty}(\R^d)$, or simply $C^{\infty}$, the collection of all smooth (i.e., infinitely differentiable) functions $f: \R^d \longrightarrow \R$.
	For $f \in C^{\infty}$, let $\gr f, \gr_x^2 f,$ and $\Delta f $ denote the gradient, Hessian, and Laplacian of $f$, respectively.
	For a smooth vector field $v:\R^d\longrightarrow\R^d$, $\gr\cdot v$ denotes the divergence of $v$.
	For vectors $a,b\in\R^d$, $\langle a,b \rangle$ is their inner product and $|a|=\sqrt{\langle a,a\rangle}$ is the Euclidean norm of $a$.
	For two matrices $M,N\in\M_{d\times d}(\R) $ their Frobenius inner product is defined as $\langle M, N \rangle_F=\tr (M^{\T} N)=\sum_{i,j=1}^d M_{ij}N_{ij} $,
	and $\| M\|_F =\sqrt{\langle M ,M\rangle_F} $ is the Frobenius norm of $M$.
	A function $\eps \longmapsto w(\eps)$ is said to be sub-exponential if $\eps \log w(\eps)\longrightarrow 0$ as $\eps\longrightarrow 0$.

	\noindent $\mathrm{(ii)}$  We denote by $\CP(\R^d)$ (resp. $\CP(\R^d \times \R^d)$) the collection of all probability measures on $\R^d$ (resp. $\R^d \times \R^d$).
	For functions $f$ and $g$ defined on $\R_+$,
	the symbol $f=\O(g)$ means that $f/g$ is bounded when some problem parameter tends to $0$ or $\infty$.

\section{Main Results and Literature}
\label{s:main}

	We will state our main convergence rate results 
	and then discuss the main idea of proof as well as some related literature.

\subsection{Main Results}

	We first provide some conditions on the (potential) function $U: \R^d \longrightarrow \R$.
	Without loss of generality, we assume that $\min_{x\in \R^d} U(x)=0$ throughout the paper.

	\begin{assumption} \label{assumption1}
		$\mathrm{(i)}$ The function $U \in C^{\infty}(\R^d)$ and all its derivatives have at most polynomial growth.
		The gradient $\gr U$ is $L$-Lipschitz for constant $L > 0$. Moreover, $U$ is $(r,m)$-dissipative in the sense that for some positive constants $r > 0$ and $m > 0$,
		$$
			\gr U(x)\cdot x\ge r|x|^2-m,
			~~\mbox{for all}~
			x \in \R^d.
		$$
		
		\noindent $\mathrm{(ii)}$ The function $U$ has a finite number of local minimizers and $\gr^2 U$ is non-degenerate at the local minimizers.
		Moreover, $U$ has at least one non-global minimizer.
	\end{assumption}

	\begin{rmk}
		$\mathrm{(i)}$ In the literature of the standard kinetic (underdamped) Langevin dynamic (i.e., \eqref{eq:kl} with constant temperature $\eps_t \equiv \eps_0$), 
		the dissipative condition on $U$ is a standard Lyapunov condition to ensure the ergodicity of the process; see e.g., Eberle, Guillin and Zimmer \cite{EberleGuillinZimmerAOP19} and Mattingly, Stuart and Higham \cite{MattinglyStuartHigham02}.
		The Lipschitz condition on $\gr U$ is also usually imposed to obtain quantitative exponential convergence rate of the law of standard kinetic Langevin dynamic to its invariant measure; see, e.g., \cite{EberleGuillinZimmerAOP19, GaoZhuOR21, MaBEJ21}.
		In particular, this condition ensures that the process $(X,Y)$ in \eqref{eq:kl} is well defined.

		\noindent $\mathrm{(ii)}$ The Lipschitz condition on $\gr U$, together with the dissipative condition, implies that there exists a positive constant $K$ such that
		\begin{equation}\label{quadratic}
			\frac{r}{3}|x|^2-K \le U(x) \le L|x|^2+K,
			~\mbox{for all}~
			x \in \R^d.
		\end{equation}
		For a proof, see, e.g., Raginsky, Rakhlin and Telgarsky \cite[Lemma 2]{RaginskyRakhlinCOLT17}.
	\end{rmk}

	Let $m_U$ and $M_U$ denote respectively the set of local minima and the set of global minima of $U$.
	We then define a constant $E_* > 0$, the so-called critical depth of $U$, by
	$$
		E_*
		:=
		\max_{ \begin{subarray}{c} x\in m_U\\ y\in M_U \end{subarray} } \inf \left\{ \max_{s\in[0,1]} U(\gamma(s))-U(x) ~:\gamma\in\C\left( [0,1], \R^d  \right),\ \gamma(0)=x, \ \gamma(1)=y \right\}.
	$$

	\begin{assumption} \label{assumption2}
		$\mathrm{(i)}$
		The initial distribution of $(X_0,Y_0)$, denoted as $\mu_0= \L (X_0, Y_0)$, has a $C^{\infty}$ density function $p_0$.
		Moreover, the initial Fisher information $\int_{\R^d \times \R^d} \frac{|\gr p_0(x,y)|^2}{p_0(x,y)}\ \d x\d y $ is finite and
		$ \E[|X_0|^m+|Y_0|^m] <\infty$ for each $m\ge 1$.

		\noindent $\mathrm{(ii)}$
		The function $t \longmapsto \eps_t$ is positive, non-increasing and differentiable.
		Moreover, for some time $t_0$ and a constant $E > E_*$, one has $\eps_t=\frac{E}{\log t}$ for all $t>t_0$.
	\end{assumption}

	\begin{rmk}
		To obtain the convergence of the simulated annealing using overdamped Langevin dynamic in \eqref{eq:ol},
		it is standard to consider the cooling schedule {\color{blue}$\eps_t = \O \lf( \frac{1}{\log t}\rh)$ as $t \longrightarrow \infty$};
		see, e.g., \cite{GemanHwang86, ChiangHwangSheu87, RoyerSICON89, HolleyStroockJFA89, Miclo92, TangZhou21}. 
		For the kinetic simulated annealing process \eqref{eq:kl}, this cooling schedule is also assumed in Journel and Monmarch\'e \cite{JournelMonmarche21} to obtain a convergence result without convergence rate.
		Moreover, it is proved in \cite{JournelMonmarche21} that the convergence may fail for a faster cooling schedule.
		In Monmarch\'e \cite{MonmarchePTRF18}, for an alternative kinetic simulated annealing process with cooling schedule on parameter $\theta$, a similar cooling schedule is also assumed.
		Moreover, our technical conditions on $U$ in Assumption \ref{assumption1} are also generally motivated by those in \cite{MonmarchePTRF18}.
	\end{rmk}

	Let us now provide a first convergence rate result on $(X, Y)$ defined by \eqref{eq:kl}.
	By a time change argument, we can easily reduce \eqref{eq:kl} to the case with $\theta =1$.
	For this reason, we will always assume $\theta = 1$ in the rest of paper.
	Also, recall that the condition $\min_{x \in \R^d} U(x) = 0$ is assumed throughout the paper.

	\begin{thm} \label{thm1}
		Let Assumptions~\ref{assumption1} and \ref{assumption2} hold true.
		Then, for any constants $\delta > 0$ and $\alpha > 0$, there exists some constant $C > 0$ such that
		\begin{equation*}
			\P \lf(U (X_t) > \delta\rh)
			~\le~
			C t^{-\min \lf( \frac{\delta}{E} , \frac{1}{2} \lf( 1-\frac{E_*}{E} \rh)  \rh) + \alpha }, \quad \mbox{for all}\;\; t>0.
		\end{equation*}
	\end{thm}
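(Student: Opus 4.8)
The plan is to compare, at each time $t$, the law $\mu_t := \L(X_t,Y_t)$ of the kinetic annealing process with the frozen Gibbs measure at the current temperature,
\[
\mu^*_{\eps}(\d x\,\d y)\;\propto\;\exp\!\Big(-\tfrac{1}{\eps}\big(U(x)+\tfrac12|y|^2\big)\Big)\,\d x\,\d y,
\]
and to use the splitting
\[
\P\big(U(X_t)>\delta\big)\;\le\;\mu^*_{\eps_t}\big(\{(x,y):U(x)>\delta\}\big)\;+\;\|\mu_t-\mu^*_{\eps_t}\|_{\TV}.
\]
The first term is a Laplace/concentration estimate for the Gibbs measure: since $U\ge 0$ with $\min U=0$, with non-degenerate Hessian at the (finitely many) global minimizers and at most polynomial growth (Assumption~\ref{assumption1}), a standard computation gives $\mu^*_{\eps}(U>\delta)=\O\big(w(\eps)\,e^{-\delta/\eps}\big)$ for some sub-exponential factor $w$; plugging in $\eps_t=E/\log t$ (Assumption~\ref{assumption2}(ii)) and absorbing $w$ into the exponent yields a bound of order $t^{-\delta/E+\alpha/2}$ for $t$ large, which is one of the two competing rates in the statement.

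For the second, dynamical term I would use Pinsker's inequality, $\|\mu_t-\mu^*_{\eps_t}\|_{\TV}\le\sqrt{\tfrac12\,\KL(\mu_t\,\|\,\mu^*_{\eps_t})}$, so that it suffices to show
\[
\KL(\mu_t\,\|\,\mu^*_{\eps_t})\;=\;\O\big(t^{-(1-E_*/E)+\alpha'}\big)
\]
for any $\alpha'>0$; taking square roots then produces the rate $t^{-\frac12(1-E_*/E)+\alpha'/2}$, the other term of the minimum. Here one uses that, under Assumptions~\ref{assumption1}--\ref{assumption2}, $p_t$ is smooth, has finite Fisher information propagated from $p_0$, and has all polynomial moments bounded uniformly in $t$ — the latter via the dissipativity of $U$ used as a Lyapunov condition for \eqref{eq:kl}.

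The heart of the argument is a differential inequality for a modified (entropic-hypocoercive, i.e.\ Villani-type) free energy $\mathcal{E}_t$ which dominates $\KL(\mu_t\,\|\,\mu^*_{\eps_t})$ and augments it with a ``twisted'' Fisher-information term mixing $\gr_x\log(p_t/\mu^*_{\eps_t})$ and $\gr_y\log(p_t/\mu^*_{\eps_t})$, with time-dependent weights chosen to compensate for the degeneracy of the noise in \eqref{eq:kl}. Differentiating $\mathcal{E}_t$ along the Fokker--Planck evolution of \eqref{eq:kl}, handling the Hamiltonian coupling and the commutator terms coming from $\gr_x U$, and invoking the logarithmic Sobolev inequality for the $x$-marginal Gibbs measure $\propto e^{-U/\eps}$ with its sharp low-temperature constant (the Menz--Schlichting estimate, bounded below by a constant times $e^{-E_*/\eps}$ up to sub-exponential factors), I would obtain an inequality of the form
\[
\frac{\d}{\d t}\mathcal{E}_t\;\le\;-\,\kappa(\eps_t)\,\mathcal{E}_t\;+\;b_t,
\qquad \kappa(\eps_t)\gtrsim e^{-E_*/\eps_t}\asymp t^{-E_*/E}\ (\text{up to sub-exp}),
\]
where the source term $b_t$, produced by the time-dependence of $\eps_t$ and of the weights, is of order $|\eps_t'|/\eps_t^2$ times bounded moment quantities, hence $b_t=\O(t^{-1})$ up to sub-exponential corrections. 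Solving by Gronwall, the homogeneous part decays like $\exp\big(-c\,t^{1-E_*/E}\big)$ (super-polynomially, as $E>E_*$), while the particular part is of order $b_t/\kappa(\eps_t)=\O\big(t^{-(1-E_*/E)}\big)$ up to sub-exponential corrections; thus $\mathcal{E}_t=\O\big(t^{-(1-E_*/E)+\alpha'}\big)$, which gives the desired KL bound. Combining the two parts and relabelling $\alpha$ concludes the proof.

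The step I expect to be the main obstacle is the differentiation of $\mathcal{E}_t$: one must pick the time-dependent Villani weights so that \emph{all} the commutator and cross terms stemming from the Hamiltonian coupling and from $\gr_x U$ are absorbed into $-\kappa(\eps_t)\mathcal{E}_t$, while keeping explicit and essentially sharp the dependence of $\kappa(\eps_t)$ on the low-temperature log-Sobolev constant of the marginal — the usual hypocoercivity bookkeeping, now made quantitative in $\eps$. A secondary difficulty, needed to make these manipulations rigorous and to control $b_t$ uniformly in $t$, is the a priori regularity and moment theory for $p_t$ (smoothness, propagation of finite Fisher information, uniform-in-time polynomial moments), which rests on Assumption~\ref{assumption1}.
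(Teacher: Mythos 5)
Your plan is correct and follows essentially the same route as the paper: the same splitting into a Gibbs--Laplace term and a total-variation/Pinsker term, with the latter controlled by a distorted (Villani-type) entropy $H_{\gamma(\eps_t)}(\mu_t|\mu^*_{\eps_t})$ mixing $\gr_x$ and $\gr_y$ of $\log(p_t/p^*_{\eps_t})$, a dissipation inequality with rate $c(\eps_t)\rho_{\eps_t}$ coming from the Menz--Schlichting low-temperature log-Sobolev constant, a source term of order $|\eps_t'|\,\omega(\eps_t)$ controlled by uniform Lyapunov moment bounds, and a Gr\"onwall argument yielding the $t^{-(1-E_*/E)+\alpha}$ decay of the entropy (hence the $\tfrac12(1-E_*/E)$ exponent after the square root). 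The only cosmetic difference is that the paper applies the LSI to the full measure $\mu^*_{\eps}$ on $\R^{2d}$ rather than to the $x$-marginal, and carries out the entropy-dissipation computation following the strategy of Ma et al.\ rather than the classical hypocoercivity bookkeeping, but the functional and the resulting estimates are the same.
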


	\begin{rmk} \label{rmk1}

		$\mathrm{(i)}$ In \cite{JournelMonmarche21}, the authors used localization arguments to obtain a convergence result for \eqref{eq:kl} with conditions on the potential function $U$ weaker than ours. They, however, did not derive the convergence rate.

		\noindent $\mathrm{(ii)}$ The convergence rate in Theorem~\ref{thm1} is the same to that in Miclo \cite{Miclo92} and Tang and Zhou \cite{TangZhou21} for the simulated annealing using overdamped Langevin dynamic,
		and also to that in Chak, Kantas and Pavliotis \cite{PavliotisGeneralisedLangevin} for the simulated annealing using generalized Langevin process. While higher order Langevin dynamics are often used in MCMC as accelerated versions compare to the overdamped Langevin dynamic (see for instance \cite{MaBEJ21, GaoZhuOR21, MouJMLR21highorderLangevin}), this is not the case in the simulated annealing problem.
		Indeed, it has been observed that the convergence behavior of the annealed process will be mainly determined by the potential function $U$,
		but not the process used; see for instance the discussion in \cite[Remarks after Theorem 1]{MonmarchePTRF18}.
	\end{rmk}

	We now present the convergence rate result for the discrete time scheme $(\Xb, \Yb)$ as defined in \eqref{eq:diskl}.

	\begin{thm} \label{thm2}
		Let Assumptions~\ref{assumption1} and \ref{assumption2} hold true.
		Assume in addition that $\gr_x^2 U$  is $L'$-Lipschitz, i.e. $\|\gr_x^2 U(x) -\gr_x^2 U(y)\|_F \le L'|x-y| $ for all $x,y \in\R^d$, for some constant $L'>0$.
		And that the time step size parameters $(\Delta t_k)_{k\ge 0}$ satisfies
		\begin{equation} \label{eq:Cond_Tk}
			\lim_{k \to \infty} T_k = \infty
			\;\;\mbox{and}\;\;
			\limsup_{k \to \infty}  \D t_k \sqrt{T_k}  < \infty.
		\end{equation}
		Then for all constants $\delta > 0$ and $\alpha > 0$, there exists a constant $C> 0$, such that
		\begin{equation*}
			\P \lf(U (\Xb_{T_k}) > \delta\rh) \le C {T_k}^{-\min \lf( \frac{\delta}{E} , \frac{1}{2} \lf( 1-\frac{E_*}{E} \rh)  \rh) + \alpha }, \quad \mbox{for all}\;\; k\ge 1.
		\end{equation*}
	\end{thm}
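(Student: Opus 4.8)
The plan is to follow the proof of Theorem~\ref{thm1} given in Section~\ref{s:conSA} almost verbatim, inserting into the governing differential inequality an extra term accounting for the frozen gradient $\gr_x U(\Xb_{\eta(t)})$ and the frozen temperature $\eps_{\eta(t)}$, and then showing that the step-size condition \eqref{eq:Cond_Tk} is precisely what keeps this term from spoiling the rate. Throughout write $\bar\mu_t := \L(\Xb_t,\Yb_t)$ and $\mu^*_\eps(\d x\,\d y) \propto \exp\!\big(-(U(x)+|y|^2/2)/\eps\big)\,\d x\,\d y$ for the instantaneous Gibbs measure. As a first step I would establish the moment bounds $\sup_{t\ge 0}\E\big[|\Xb_t|^p+|\Yb_t|^p\big]<\infty$ for every $p\ge 1$: starting from the finite initial moments in Assumption~\ref{assumption2}(i), this follows from the dissipativity in Assumption~\ref{assumption1}(i) via the usual hypoelliptic Lyapunov function $V(x,y)=|x|^2+|y|^2+c\langle x,y\rangle$, the only new point being that the frozen drift $-\gr_x U(\Xb_{\eta(t)})$ must be compared with $-\gr_x U(\Xb_t)$ using the $L$-Lipschitz property of $\gr U$ and $|\Xb_t-\Xb_{\eta(t)}|=\big|\int_{\eta(t)}^t\Yb_s\,\d s\big|$, the resulting correction being absorbed since $\Delta t_k\to 0$ by \eqref{eq:Cond_Tk}.

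Second, I would revisit the hypocoercive functional $\mathcal{L}_t$ (a modified relative entropy, or a distorted Fisher information, of $\L(X_t,Y_t)$ with respect to $\mu^*_{\eps_t}$) used in Section~\ref{s:conSA}, together with the differential inequality $\tfrac{\d}{\d t}\mathcal{L}_t\le -\lambda(\eps_t)\mathcal{L}_t+b(t)$ it satisfies, where $\lambda(\eps)=c_0\,\eps\,e^{-E_*/\eps}$ up to sub-exponential factors (the low-temperature hypocoercive/log-Sobolev rate, the exponential $e^{-E_*/\eps}$ coming from the critical depth $E_*$) and $b(t)$ collects the terms produced by the motion of the reference measure $\mu^*_{\eps_t}$, with $\int_0^t e^{-\int_s^t\lambda(\eps_u)\d u}b(s)\,\d s$ already estimated in Section~\ref{s:conSA} so as to give the rate of Theorem~\ref{thm1}. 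For the scheme, $\bar\mu_t$ solves a Fokker--Planck equation whose drift is the conditional expectation $b_t(x,y):=\E\big[\gr_x U(\Xb_{\eta(t)})\,\big|\,\Xb_t=x,\Yb_t=y\big]$ and whose diffusion coefficient is $\eps_{\eta(t)}$; repeating the Section~\ref{s:conSA} computation with the analogous functional $\overline{\mathcal{L}}_t$ produces the same inequality plus an error term $\mathcal{E}_t$ coming from $b_t-\gr_x U$, from the associated difference $\gr_x^2 U(\Xb_t)-\gr_x^2 U(\Xb_{\eta(t)})$ (which is where differentiating $\overline{\mathcal{L}}_t$ through the drift and the extra $L'$-Lipschitz hypothesis on $\gr_x^2 U$ enter), and from $\eps_{\eta(t)}-\eps_t$. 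By Cauchy--Schwarz and Young, $\mathcal{E}_t$ splits into $\tfrac12\lambda(\eps_t)\overline{\mathcal{L}}_t$, absorbed into the dissipation (only the velocity part of the entropy/Fisher dissipation is consumed, so the hypocoercive conversion still yields $-\tfrac12\lambda(\eps_t)\overline{\mathcal{L}}_t$), plus $C\eps_t^{-1}\E\big[|b_t-\gr_x U(\Xb_t)|^2\big]+\cdots$; by Jensen's inequality $\E\big[|b_t-\gr_x U(\Xb_t)|^2\big]\le L^2\,\E\big[|\Xb_t-\Xb_{\eta(t)}|^2\big]=\O(\Delta t_{\eta(t)}^2)$ by the moment bounds, the Hessian term is $\O(\Delta t_{\eta(t)}^2)$ by $L'$-Lipschitzness, and the temperature term $\O\big(|\dot\eps_{\eta(t)}|^2\Delta t_{\eta(t)}^2/\eps_t\big)$ is negligible. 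Hence $\tfrac{\d}{\d t}\overline{\mathcal{L}}_t\le-\tfrac12\lambda(\eps_t)\overline{\mathcal{L}}_t+b(t)+C\,\eps_t^{-1}\Delta t_{\eta(t)}^2$.

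Third, Grönwall's inequality. The contributions of $\overline{\mathcal{L}}_0$ and $b(t)$ reproduce, up to constants, the estimate of Theorem~\ref{thm1}, so it remains to bound $\int_0^t e^{-\frac12\int_s^t\lambda(\eps_u)\d u}\,\eps_s^{-1}\Delta t_{\eta(s)}^2\,\d s$. Condition \eqref{eq:Cond_Tk} yields $\Delta t_{\eta(s)}^2\le C/T_{\eta(s)}\le 2C/s$ for large $s$ (because $T_{\eta(s)}\ge s-\Delta t_{\eta(s)}\ge s/2$), hence $\eps_s^{-1}\Delta t_{\eta(s)}^2\le C'\,s^{-1}\log s$ using $\eps_s=E/\log s$; since $\lambda(\eps_t)^{-1}$ is sub-polynomial in $t$ relative to $t$ itself, a Laplace-type estimate of the convolution gives $\int_0^t e^{-\frac12\int_s^t\lambda(\eps_u)\d u}\eps_s^{-1}\Delta t_{\eta(s)}^2\,\d s=\O\big((\log t)^2\,t^{-(1-E_*/E)}\big)$, i.e.\ $\O\big(t^{-(1-E_*/E)+\alpha}\big)$ for any $\alpha>0$. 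Thus $\overline{\mathcal{L}}_{T_k}\le C\,T_k^{-(1-E_*/E)+\alpha}$, and plugging this into the final step of Section~\ref{s:conSA} --- writing $\P\big(U(\Xb_{T_k})>\delta\big)\le\mu^*_{\eps_{T_k}}(U>\delta)+\|\bar\mu_{T_k}-\mu^*_{\eps_{T_k}}\|_{\TV}$, with $\mu^*_{\eps_{T_k}}(U>\delta)=\O\big(T_k^{-\delta/E+\alpha}\big)$ by a Laplace estimate around the global minima and $\|\bar\mu_{T_k}-\mu^*_{\eps_{T_k}}\|_{\TV}\le\sqrt{\tfrac12\overline{\mathcal{L}}_{T_k}}$ (Pinsker, or the corresponding inequality when $\overline{\mathcal{L}}$ is a distorted Fisher information) --- gives the claimed bound.

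The step I expect to be the main obstacle is the second one: pushing the Section~\ref{s:conSA} functional computation through with the frozen drift and temperature while verifying that every new term is genuinely $\O(\Delta t_{\eta(t)}^2)$ --- this is where the non-Markovianity of the interpolated scheme $(\Xb,\Yb)$ must be handled through the conditional-expectation drift $b_t$ and Jensen, and where the $L'$-Lipschitz Hessian hypothesis is really used --- and simultaneously checking that the fraction of the dissipation surrendered to Young's inequality still leaves enough to close the inequality at rate $\propto\lambda(\eps_t)$. Once that is in place the Grönwall bookkeeping is routine, the key observation being that the correct Young penalty is $\eps_t^{-1}$ rather than $\lambda(\eps_t)^{-1}$, which is exactly why the relatively mild condition $\Delta t_k=\O(T_k^{-1/2})$ in \eqref{eq:Cond_Tk} suffices.
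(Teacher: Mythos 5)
Your strategy is essentially the paper's: a uniform moment bound and the increment estimate $\E[|\Xb_t-\Xb_{\eta(t)}|^2]=\O(\D t_k^2)$ (Propositions~\ref{prop:dismoment} and \ref{prop:disboundx}), then the hypocoercive one-step dissipation of $H_{\gamma(\eps_k)}(\mub_t|\mu^*_k)$ driven by a Fokker--Planck equation whose frozen-gradient drift is rewritten as the conditional expectation $A_t$ (Lemma~\ref{lem:disFP} and \eqref{eq:A}), controlled by Jensen/Young with penalties that are only polynomial in $\eps_k^{-1}$ (Lemmas~\ref{lem:young}, \ref{lem:gradA}, \ref{disMpsd}, giving Proposition~\ref{prop:42}), a temperature-update estimate (Lemma~\ref{disepsilon}; the paper does this as the discrete split \eqref{eq:derdis1}--\eqref{eq:derdis2} rather than your moving-reference bookkeeping, a minor difference), and finally a Gr\"onwall recursion plus Pinsker and the Laplace estimate \eqref{disbound}, with \eqref{eq:Cond_Tk} entering exactly as you say, through $\D t_k^2\le C/T_k$. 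Your observation that the Young penalty must be $\eps^{-1}$ (sub-exponential) rather than $\rho_\eps^{-1}$ is indeed the key point that makes $\D t_k=\O(T_k^{-1/2})$ sufficient.

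There is, however, one concrete ingredient missing at precisely the step you flag as the main obstacle. The second-order error term $4\int\langle\gr_z\gr_y\log\th_t,\,S\gr_z A_t\rangle_F\,\d\mub_t$ requires the gradient of the conditional expectation $A_t$, and $\gr_z A_t(z)$ is \emph{not} just the conditional mean of $\gr_x^2U(\Xb_t)-\gr_x^2U(\Xb_k)$: differentiating through the $z$-dependence of the backward conditional law $\pb_{T_k|t}(\cdot|z)$ produces the additional matrix terms $J_t(\Xb_k)$ in \eqref{eq:gradA}, involving $(I_d+\varpi(t)\gr_x^2U(\Xb_k))^{-1}$. The paper computes these explicitly (Lemma~\ref{lem:gradA}, adapting \cite[Lemma 10]{MaBEJ21}) by exploiting that, conditionally on $\Zb_{T_k}$, the second-order scheme is an exactly solvable linear Gaussian SDE on $[T_k,T_{k+1}]$ --- this is also why the second-order scheme, rather than Euler--Maruyama, is analyzed (see the remark after Proposition~\ref{prop:42}). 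These extra terms are $\O(L\D t_k)$ in operator norm, hence contribute $\O(L^2\D t_k^2/\eps_k)$ after Young and do not change your rate, so your plan closes once this computation is supplied; but attributing the whole Hessian-level error to ``$\gr_x^2U(\Xb_t)-\gr_x^2U(\Xb_{\eta(t)})=\O(\D t)$ by $L'$-Lipschitzness'' is incomplete as stated. A smaller omission: the Gr\"onwall recursion only starts at a large index $k_0$ (the matrix inequality of Lemma~\ref{disMpsd} holds only for large $k$), so you also need finiteness of $H_{\gamma(\eps_{k_0})}(\mub_{k_0}|\mu^*_{k_0})$, which the paper proves separately by propagating a Fisher-information bound (Lemma~\ref{disfiniteH}), the discrete analogue of Lemma~\ref{finiteH}.
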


	\begin{rmk}
		$\mathrm{(i)}$ The additional Lipschitz condition on $\gr_x^2 U$ will be essentially used to control the discretization error in the discrete time scheme \eqref{eq:diskl}.

		\noindent $\mathrm{(ii)}$ We need that $\Delta t_k \longrightarrow 0$ as $T_k \longrightarrow \infty$  to control the (cumulative) discretization error in the scheme \eqref{eq:diskl}.
		At the same time, $\Delta t_k$ should not decrease too fast, so that $T_k \longrightarrow \infty$ as $k \longrightarrow \infty$ and thus $(X_{T_k})_{k \ge 0}$ can reach the global minima of $U$. This explains the condition \eqref{eq:Cond_Tk}.

		\noindent $\mathrm{(iii)}$ Let $C_1 > 0$, $C_2 > 0$ be two constants, let us define $(\D t_k)_{k \ge 0}$ by $\D t_0 := C_1$ and $\D t_k := C_2 T_k^{-1/2} $ for $k \ge 1$.
		Then, it is straightforward to check that
		$$
			T_{k+1}^2
			~=~
			 (T_k + \D t_k)^2 = T^2_k + C^2/T_k + 2 CT_k^{1/2}
			~\ge~
			T^2_k + 3C^{4/3}.
		$$
		Therefore, $T_k \longrightarrow \infty$ as $k \longrightarrow\infty$, and thus condition \eqref{eq:Cond_Tk} holds.

		\noindent $\mathrm{(iv)}$ In Tang and Zhou \cite{TangZhou21}, for the discrete simulated annealing based on the overdamped Langevin dynamic \eqref{eq:ol},
		the authors required the step size $\D t_k$ satisfying
		\begin{equation*}
			\lim_{k \to \infty} T_k = \infty
			\;\;\mbox{and}\;\;
			\limsup_{k \to \infty}  \D t_k T_k  < \infty,
		\end{equation*}
		which is a little stronger than our condition \eqref{eq:Cond_Tk}.
		Of course, \eqref{eq:Cond_Tk} is only a sufficient condition to ensure the convergence rate result.
		
	\end{rmk}

	Notice that  \eqref{eq:diskl} is a linear SDE on each time interval $[T_k,T_{k+1}]$, so we can solve it explicitly.
	Namely, given the value $(\Xb_{T_k}, \Yb_{T_k})$, we have for all $t \in [T_k, T_{k+1}]$,
	\begin{equation} \label{eq:xbtybt}
	\begin{cases}
		\Xb_t  = \Xb_{T_k} + \lf( 1-e^{-(t-T_k)} \rh) \Yb_{T_k} - \lf(  t-T_k - \lf( 1-e^{-(t-T_k)} \rh) \rh) \gr_x U(\Xb_{T_k}) + D_x(t), \\
		\Yb_t  = e^{-(t-T_k)} \Yb_{T_k} - \lf( 1-e^{-(t-\T_k)} \rh) \gr_x U(\Xb_{T_k}) + D_y(t),
	\end{cases}
	\end{equation}
	with
	\begin{equation*}
		D_x (t)  = \int_{T_k}^t D_y(s)\d s ,
		\;\;\;\;
		D_y (t)  = \sqrt{2\eps_{T_k}} \int_{T_k}^t e^{-(t-s)}\d B_s,
	\end{equation*}
	where $B_t$ is the Brownian motion in \eqref{eq:diskl}.
	Therefore, we can implement $(\Xb, \Yb)$ on the discrete time grid $(T_k)_{k \ge 0}$ in an exact way.
	More precisely, by abbreviating $(\Xb_{T_k}, \Yb_{T_k}, \eps_{T_k})$ to $(\Xb_k, \Yb_k, \eps_k)$,
	we have
	\begin{equation} \label{eq:implement}
	\begin{cases}
		\Xb_{k+1}  = \Xb_k + \lf( 1-e^{-\D t_k} \rh) \Yb_k - \lf(  \D t_k - \lf( 1-e^{-\D t_k} \rh) \rh) \gr_x U(\Xb_k) + D_x(k), \\
		\Yb_{k+1}  = e^{-\D t_k} \Yb_k - \lf( 1-e^{-\D t_k} \rh) \gr_x U(\Xb_k) + D_y(k),
	\end{cases}
	\end{equation}
	where $\lf(D_x(k), D_y(k) \rh)$ is Gaussian vector in $\R^d \times\R^d $ independent of $(\Xb_k ,\Yb_k)$ with mean zero and covariance matrix
	\begin{equation} \label{eq:sigmak}
	\Sigma_k=
	\begin{pmatrix}
		\Sigma_{11}(k) I_d & \Sigma_{12}(k) I_d\\
		\Sigma_{12}(k) I_d & \Sigma_{22}(k) I_d
	\end{pmatrix},
	\end{equation}
	with $\Sigma_{11}(k) := \eps_k\lf( 2\D t_k -3 + 4e^{-\D t_k} - e^{-2\D t_k} \rh)$, $\Sigma_{12}(k) :=\eps_k \lf( 1 - 2e^{-\D t_k} + e^{-2\D t_k} \rh)$ and $\Sigma_{22}(k) :=  \eps_k \lf( 1 - e^{-2\D t_k} \rh) $.

\subsection{Main Idea of Proofs and Related Literature}
\label{s:proofsketch}

	Recall that for two probability measures $\mu, \nu\in\CP(\R^d)$ (or in $\CP(\R^d \times \R^d)$) such that $\mu\ll\nu$,
	the relative entropy $\KL(\mu|\nu)$ and the Fisher information $I(\mu|\nu)$ are defined by
	$$
		\KL(\mu|\nu) := \int\log \left(\frac{\d \mu}{\d \nu} \right)\d \mu,
		\;\;\;
		I(\mu|\nu):=\int\left|\gr\log\frac{\d \mu}{\d\nu} \right|^2\d\mu.
	$$

	For the simulated annealing process \eqref{eq:ol} using overdamped Langevin dynamic, let us denote $\mu_{o,t} := \L(X_t)$ and $\mu^*_{o,\eps} \propto\exp(-U(x)/\eps)\d x$ the invariant measure of standard overdamped Langevin dynamic with constant temperature $\eps$.
	To deduce their convergence rate result, Tang and Zhou \cite{TangZhou21} analyze the evolution of $\KL(\mu_{o,t}| \mu^*_{\eps_t,o})$.
	A key step consists in obtaining
	\begin{equation} \label{eq:olexp}
		\frac{\d}{\d t} \KL(\mu_{o,t}| \nu) \big|_{\nu = \mu^*_{o,\eps_t}}
		~=~
		-I(\mu_{o,t}|\mu^*_{o,\eps_t})
		~\le~
		- \rho(\eps_t) \ \KL(\mu_{o,t}|\mu^*_{o,\eps_t})
	\end{equation}
	for a good constant $\rho(\eps_t)$ (depending on $\eps_t$).
	The equality in \eqref{eq:olexp} follows the so-called de Bruijn’s identity, which is the entropy dissipation equation for the standard overdamped Langevin dynamic; see for instance Chapter 5.2. in the monograph Bakry, Ledoux and Gentil \cite{BGL} or Otto and Villani \cite{OttoVillaniJFA00} for a more general setting.
	The inequality in \eqref{eq:olexp} follows from the log-Sobolev inequality (LSI).
	This step is also the classical way to deduce the exponential ergodicity of the standard overdamped Langevin dynamic; see for instance Arnold, Markowich, Toscani and Unterreiter\cite{ArnoldMarkowichCPDE01} and Pavliotis \cite{PavliotisLangevinBook}.

	For the kinetic simulated annealing process \eqref{eq:kl} (with $\theta=1$), let us denote $\mu_t := \L (X_t, Y_t)$ and by
	$$
		\mu^*_{\eps}(\d x\,\d y) \propto\exp\left(-\frac{1}{\eps} \left( U(x)+\frac{|y|^2}{2} \right) \right) \d x\, \d y
	$$
	the invariant measure of standard kinetic Langevin dynamic with constant temperature $\eps$ (i.e. \eqref{eq:kl} with $\eps_t \equiv \eps$).
	It is, however, no longer helpful to use the relative entropy $\KL(\mu_t | \mu^*_{\eps_t})$ for the convergence analysis.
	Indeed, because the Brownian motion only appears in the $y$-direction in \eqref{eq:kl}, the time derivative of the relative entropy only gives a part of the Fisher information:
	$$
		\frac{\d}{\d t}\KL(\mu_t |  \nu) \big|_{\nu = \mu^*_{\eps}} = -\eps\int \left| \gr_y\log\frac{\d \mu_t}{\d \mu^*_{\eps}}  \right|^2\d \mu_t \not=-\eps I(\mu_t|\mu^*_{\eps}).
	$$
	Thus, we can not use the LSI to proceed as in \cite{TangZhou21} for the overdamped simulated annealing.

	This is also the main reason why we cannot use the relative entropy to deduce the exponential ergodicity of the standard kinetic Langevin dynamic,
	for which a successful alternative method is the so-called hypocoercivity method, initiated in Desvillettes and Villani \cite{DesvillettesVillani01}, H{\'e}rau \cite{Herau06, HerauJFA07} (see also Villani \cite{VillaniHypocoercivityAMS} for a detailed presentation).
	As in Monmarch\'e \cite{MonmarchePTRF18}, we will adopt the hypocoercivity method to study our kinetic simulated annealing processes in \eqref{eq:kl} and \eqref{eq:diskl}.
	More precisely, for two probability measures $\mu,\nu\in\CP(\R^d \times \R^d)$, we consider the distorted relative entropy $H_{\gamma}(\mu| \nu)$ defined by
	\begin{equation}\label{disent}
		H_{\gamma}(\mu|\nu):=\int \left( \lf|\gr_x\log\frac{\d \mu}{\d \nu} + \gr_y\log\frac{\d \mu}{\d \nu}\rh|^2 +\gamma \log \frac{\d \mu}{\d \nu} \right) \d \mu,
	\end{equation}
	where $\gamma>0$ is a constant.
	To deduce the convergence rate result of \eqref{eq:kl}, we will choose $\gamma$ to be a function of $\eps_t$, and then analyze the evolution of $H_{\gamma(\eps_t)}(\mu_t|\mu^*_{\eps_t})$:
	\begin{equation} \label{eq:der}
		\frac{\d}{\d t} H_{\gamma(\eps_t)}(\mu_t|\mu^*_{\eps_t}) = \dr_{\mu,t} H_{\gamma(\eps_t)} (\mu_t|\mu^*_{\eps_t}) +
		\dr_{\eps, t} H_{\gamma(\eps_t)} (\mu_t|\mu^*_{\eps_t}),
	\end{equation}
	where
	\begin{equation} \label{eq:der1}
		\lf. \dr_{\mu,t} H_{\gamma(\eps_t)} (\mu_t|\mu^*_{\eps_t}) := \frac{\d}{\d t} H_{\gamma} (\mu_t | \nu) \rh|_{\nu=\mu^*_{\eps_t},\gamma=\gamma(\eps_t)}
	\end{equation}
	and
	\begin{equation} \label{eq:der2}
		\lf. \dr_{\eps, t} H_{\gamma(\eps_t)} (\mu_t|\mu^*_{\eps_t}) := \frac{\d}{\d t} H_{\gamma(\eps_t)} (\mu | \mu^*_{\eps_t} ) \rh|_{\mu=\mu_t}.
	\end{equation}
	The term \eqref{eq:der1} is from the (instantaneous) evolution of $H_{\gamma(\eps_t)} (\mu_t|\mu^*_{\eps_t})$ along the kinetic diffusion \eqref{eq:kl} for fixed temperature $\eps_t$ and
	the term \eqref{eq:der2} arises from the influence of (instantaneous) invariant measure $\mu^*_{\eps_t}$ and $\gamma(\eps_t)$ on $H_{\gamma(\eps_t)} (\mu_t|\mu^*_{\eps_t})$.

	For the term \eqref{eq:der1}, with a carefully chosen $\eps_t \longmapsto \gamma(\eps_t)$ and by adapting a computation strategy in Ma, Chatterji, Cheng, Flammarion, Bartlett and Jordan \cite{MaBEJ21} for standard kinetic Langevin dynamic,
	we obtain
	\begin{equation} \label{eq:der3}
		\dr_{\mu,t} H_{\gamma(\eps_t)} (\mu_t|\mu^*_{\eps_t})
		~\le~
		- c(\eps_t) \rho_{\eps_t} H_{\gamma(\eps_t)} (\mu_t|\mu^*_{\eps_t}),
	\end{equation}
	where $c(\eps_t) = \gamma^{-1}(\eps_t)$ and $\rho_{\eps_t} > 0$ is the constant in the log-Sobolev inequality (LSI) satisfied by $\mu^*_{\eps_t}$.

	\begin{rmk}
		As in \cite{MonmarchePTRF18} and  \cite{TangZhou21}, it is crucial to use a fine estimation on the constant $\rho_{\eps_t}$ in the LSI for $\mu^*_{\eps_t}$ (under Assumption~\ref{assumption1}),
		in particular for the case with small $\eps_t $.
		We refer to Menz and Schlichting \cite{MenzSchlichtingAOP14}, especially, Section 2.3.3 therein, for this issue.
	\end{rmk}

	The treatment of the term \eqref{eq:der2} is classical (see e.g. Holley, Kusuoka and Stroock \cite{HolleyStroockJFA89}, Miclo \cite{Miclo92}, Tang and Zhou \cite{TangZhou21} in the case of the overdamped simulated annealing, or Monmarché \cite{MonmarchePTRF18} in the case of an alternative kinetic simulated annealing).
	First, we will apply the Lyapunov function technique as in Talay \cite{Talay02} to obtain a uniform bound on the moment of $(X_t, Y_t)$ in \eqref{eq:kl} for all $t \ge 0$.
	Using this uniform moment bound, together with the explicit expression of $\mu^*_{\eps_t}$ for all $t \ge 0$, we can directly compute that, for some sub-exponential $\omega(\eps)$,
	\begin{equation} \label{eq:der4}
		\dr_{\eps, t} H_{\gamma(\eps_t)} (\mu_t|\mu^*_{\eps_t})
		~\le~
		\omega(\eps_t) |\eps_t'| \big(1+H_{\gamma(\eps_t)} (\mu_t|\mu^*_{\eps_t}) \big) , \quad \mbox{for all}\;\; t>0.
	\end{equation}
	where $\eps'_t$ is the derivative of $t \longmapsto \eps_t$.
	The term $\omega(\eps_t)  |\eps_t'|$ at the right-hand-side(r.h.s.) of \eqref{eq:der4} is positive.
	In order to make $H_{\gamma(\eps_t)} (\mu_t|\mu^*_{\eps_t})$ decrease along the time and to get an explicit decay rate,
	we need to make sure that the term $c(\eps_t) \rho_{\eps_t} $ in  \eqref{eq:der3} is greater than the term $\omega(\eps_t)  |\eps_t'|$ in \eqref{eq:der4}.
	This requires a careful choice of $\eps \longmapsto \gamma(\eps)$ as well as some conditions on the cooling schedule $t \longmapsto \eps_t$ as in Assumption~\ref{assumption2}.

	\begin{rmk} \label{rem:compare_monmarche}
	
		$\mathrm{(i)}$ The above sketch of proof, as well as the use of the hypocoercivity method with the distorted entropy \eqref{disent}, is similar to that in Monmarch\'e \cite{MonmarchePTRF18} for a different kinetic annealing  process.
		More precisely, \cite{MonmarchePTRF18} studies the process
		\begin{equation} \label{eq:klMon}
 			X_t = X_0 + \int_0^t Y_s \d s,
			\;\;\;\;
			Y_t = Y_0 - \int_0^t \gr U(X_s)\d s - \int_0^t \frac{1}{\eps_s} Y_s\d s + \int_0^t \sqrt{2} \d B_s,
		\end{equation}
		with $\eps_s \longrightarrow 0$ as $s \longrightarrow \infty$.
		As our cooling schedule is on the diffusion parameter, we need to design another function $\eps_t \longmapsto \gamma(\eps_t)$ to compute the distorted entropy $H_{\gamma(\eps_t)} (\mu_t|\mu^*_{\eps_t})$.
		More importantly, \cite{MonmarchePTRF18} reformulated the problem into a Bakry-Emery framework to establish a contraction inequality similar to \eqref{eq:der3},
		while we apply a different computation strategy adapted from \cite{MaBEJ21}.
		In particular, as shown in \cite{MaBEJ21}, this computation strategy can be adapted to the discrete time setting for convergence analysis of numerical schemes, a subject that is not addressed in \cite{MonmarchePTRF18}.

		\noindent $\mathrm{(ii)}$ From a numerical point of view, our kinetic annealing process \eqref{eq:kl} seems to be more convenient for discrete time simulation than that in \eqref{eq:klMon}.
		Indeed, as $\eps_s \longrightarrow 0$, the Lipschitz constant of the coefficient functions in SDE \eqref{eq:klMon} will explode,
		so the corresponding time discretization error in the numerical analysis can be much harder to control.
	\end{rmk}

	For the convergence analysis of the discrete time process $(\Xb, \Yb)$ in \eqref{eq:diskl}, the main idea will be quite similar.
	Denote $\mub_t := \L (\Xb_t, \Yb_t)$.
	For ease of presentation, let us abbreviate $(\eps_{T_k}, \mub_{T_k}, \mu^*_{\eps_{T_k}})$ to $(\eps_k, \mub_k, \mu^*_k)$.
	Then, with the same distorted entropy function defined by \eqref{eq:der}, we need to compute the difference, for each $k \ge 0$,
	\begin{eqnarray}
		&&
		H_{\gamma(\eps_{k+1})} (\mub_{k+1}|\mu^*_{k+1}) - H_{\gamma(\eps_k)} (\mub_k | \mu^*_k)
		\nonumber \\
		&=&
		H_{\gamma(\eps_k)} (\mub_{k+1}|\mu^*_k) ~-~ H_{\gamma(\eps_k)} (\mub_k|\mu^*_k)
		\label{eq:derdis1} \\
		&&+~
		H_{\gamma(\eps_{k+1})} (\mub_{k+1}|\mu^*_{k+1}) ~-~ H_{\gamma(\eps_k)} (\mub_{k+1}|\mu^*_k).
		 \label{eq:derdis2}
	\end{eqnarray}

	For the term \eqref{eq:derdis1}, we can adapt the computation in \cite{MaBEJ21} for the standard kinetic Langevin process.
	Concretely, we can interpolate $H_{\gamma(\eps_k)} (\mub_k | \mu^*_k)$ and $H_{\gamma(\eps_k)} (\mub_{k+1} | \mu^*_k)  $ by the function $ t \longmapsto H_{\gamma(\eps_k)} (\mub_t| \mu^*_k)$ with $t \in [T_k, T_{k+1}]$.
	Notice that $(\mub_t)_{t \in [T_k, T_{k+1}]}$ satisfies a Fokker-Planck equation with a (partially) frozen coefficient function.
	We can then apply the same computation strategy as in the continuous time setting to obtain a (asymptotic) contraction estimation.

	Similarly, the term  \eqref{eq:derdis2} can be handled by interpolating $H_{\gamma(\eps_k)} (\mub_{k+1}|\mu^*_k)$ and $H_{\gamma(\eps_{k+1})} (\mub_{k+1}|\mu^*_{k+1})$ with $\eps \longrightarrow H_{\gamma(\eps)}(\mub_{k+1} | \mu^*_{\eps})$ for $\eps \in [\eps_{k+1} ,\eps_k]$, together  with a uniform moment estimation on $(\Xb, \Yb)$.
	Finally, with a good choice of the time step size parameters $(\D t_k)_{k \ge 0}$, we can combine the estimations of \eqref{eq:derdis1} and  \eqref{eq:derdis2} to obtain a convergence rate result for the process $(\Xb, \Yb)$ on the discrete time grid $(T_k)_{k \ge 0}$.

	In particular, the above analysis on $H_{\gamma(\eps_k)} (\mub_k | \mu^*_k)$ extends the hypocoercivity method for discrete time Langevin dynamic in \cite{MaBEJ21}
	to the setting with time-dependent coefficient.
	We also notice that most existing papers on the numerical aspects of kinetic type equations by hypocoercivity method mainly focused on the discretization of corresponding Fokker-Planck equations, as in Dujardin, H\'erau and Lafitte \cite{DujardinHerauLafitte20} and Porretta and Zuazua \cite{PorrettaZuazua17}.


\section{Convergence of the continuous-time simulated annealing}
\label{s:conSA}

	In this section, we will present the convergence analysis of continuous-time kinetic annealing process \eqref{eq:kl} in order to prove Theorem \ref{thm1}.
	As discussed in Section~\ref{s:proofsketch}, our main strategy consists in studying the evolution of
	$$
		t \longmapsto H_{\gamma(\eps_t)} (\mu_t | \mu^*_t),
		~~\mbox{with}~
		\gamma(\eps) := \frac{4(1+L^2)}{\eps},
	$$
	where $L > 0$ is the Lipschtiz constant of $\nabla U$ in Assumption \ref{assumption1}.
	For this purpose, we will study separately the two terms at the r.h.s. of \eqref{eq:der}.

	For simplicity of presentation, we will assume that $\theta = 1$. Denote $z = (x,y)$ as a single variable and $Z = (X, Y)$ as the process satisfying \eqref{eq:kl}.
	For each $t \ge 0$, denote by $\mu_t = \L (Z_t)$ the marginal distribution of $Z$.
	Similar to \cite[Proposition 4]{MonmarchePTRF18}, under  the smoothness and growth conditions of $U$ in Assumption \ref{assumption1},  $\mu_t$ has a strictly positive smooth density function on $\R^{2d}$, denoted by $p_t$.
	Recall also that, for each $\eps > 0$, the  invariant probability measure of \eqref{eq:kl} with constant temperature $\eps_t \equiv \eps$ is denoted by $\mu^*_{\eps}$, i.e.,
	with some renormalized constant $C_{\eps} > 0$,
	$$
		\mu^*_{\eps} (\d z) = p^*_{\eps}(z) dz,
		~~\mbox{with}~ p^*_{\eps}(x,y) := C_{\eps} \exp \lf( - \big(U(x)+ |y|^2/2 \big)/ \eps \rh).
	$$

\subsection{Preliminary analysis of continuous kinetic annealing process}
\label{s:conprocess}

	Let us first recall a fine result on the log-Sobolev inequality from \cite[Proposition 2]{MonmarchePTRF18}, which is mainly based on Menz and Schlichting \cite[Corollary 2.18]{MenzSchlichtingAOP14}.

	\begin{prop} \label{prop:LSI}
		Let $U$ satisfy Assumption~\ref{assumption1}.
		Then, for all $\eps > 0$,
		$$
			\KL(\nu|\mu_{\eps}^*) \le \max \left\{ \frac{\eps}{2}, \frac{1}{2\rho_{\eps}} \right\} I(\nu|\mu_{\eps}^*),
			~\mbox{for all}~
			\nu \ll \mu_{\eps}^*,
		$$
		where $\rho_{\eps} =\chi(\eps)\exp \left( - \frac{E_*}{\eps} \right)$ and $\chi(\eps)$ is sub-exponential in the sense that $\eps \log \chi(\eps) \longrightarrow 0 $ as $\eps \longrightarrow 0$.
\end{prop}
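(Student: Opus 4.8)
The plan is to exploit the product structure of the invariant measure together with the tensorization property of the log-Sobolev inequality (LSI). The density $p^*_\eps$ factorizes as $p^*_\eps(x,y) = p^U_\eps(x)\,g_\eps(y)$, so that $\mu^*_\eps = \nu_\eps \otimes g_\eps$, where $\nu_\eps(\d x) \propto \exp(-U(x)/\eps)\,\d x$ is the Gibbs measure of $U$ at temperature $\eps$ and $g_\eps = \mathcal{N}(0,\eps I_d)$ is the velocity marginal. For such a product measure the Fisher information splits, $I(\nu|\mu^*_\eps) = \int \big( |\gr_x \log\tfrac{\d\nu}{\d\mu^*_\eps}|^2 + |\gr_y\log\tfrac{\d\nu}{\d\mu^*_\eps}|^2 \big)\,\d\nu$, and the LSI tensorizes: if $\mu_1,\mu_2$ satisfy $\KL(\cdot|\mu_i)\le \tfrac{1}{2\rho_i} I(\cdot|\mu_i)$, then $\mu_1\otimes\mu_2$ satisfies the same inequality with constant $\rho = \min(\rho_1,\rho_2)$ (see e.g. \cite{BGL}). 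Hence it suffices to produce an LSI constant for each of the two factors.

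For the Gaussian factor, the potential $y\mapsto |y|^2/(2\eps)$ has Hessian $\eps^{-1}I_d$, so the Bakry--Émery criterion gives that $g_\eps$ satisfies an LSI with constant $\eps^{-1}$, i.e. $\KL(\cdot|g_\eps)\le \tfrac{\eps}{2}\,I(\cdot|g_\eps)$; this is the source of the $\tfrac{\eps}{2}$ term. For the $U$-factor, I would invoke the sharp low-temperature estimate of Menz and Schlichting \cite[Corollary 2.18]{MenzSchlichtingAOP14}: Assumption~\ref{assumption1} supplies exactly the required hypotheses — the dissipativity condition yields, via \eqref{quadratic}, the quadratic confinement controlling $\nu_\eps$ at infinity, while the finiteness of the set of local minimizers and the non-degeneracy of $\gr^2 U$ there place $U$ in the admissible class — and it gives that $\nu_\eps$ satisfies an LSI with constant $\rho_\eps = \chi(\eps)\exp(-E_*/\eps)$, where $E_*$ is the critical depth of $U$ and $\chi$ is sub-exponential. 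This is precisely the statement established in \cite[Proposition~2]{MonmarchePTRF18}, which I would cite directly; the only bookkeeping is to match the normalization of the LSI constant and to identify the exponential rate with the critical depth as defined here.

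Combining the two factors by tensorization, $\mu^*_\eps$ satisfies an LSI with constant $\min(\eps^{-1},\rho_\eps)$, that is
$$\KL(\nu|\mu^*_\eps) \le \frac{1}{2\min(\eps^{-1},\rho_\eps)}\, I(\nu|\mu^*_\eps) = \max\Big\{\tfrac{\eps}{2},\tfrac{1}{2\rho_\eps}\Big\}\, I(\nu|\mu^*_\eps), \qquad \nu\ll\mu^*_\eps,$$
which is the claim. The only genuinely delicate point is the appeal to \cite[Corollary~2.18]{MenzSchlichtingAOP14} — verifying that the growth and non-degeneracy hypotheses in Assumption~\ref{assumption1} indeed fit their framework and that the exponential rate is governed by $E_*$ — everything else (the product decomposition, the Gaussian LSI, and the tensorization) being standard.
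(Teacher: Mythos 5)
Your argument is correct and is essentially the paper's own route: the paper simply recalls this statement from \cite[Proposition 2]{MonmarchePTRF18}, which in turn rests on Menz--Schlichting \cite[Corollary 2.18]{MenzSchlichtingAOP14}, exactly the reference you invoke for the $x$-marginal. The product decomposition $\mu^*_\eps=\nu_\eps\otimes\mathcal{N}(0,\eps I_d)$, the Bakry--\'Emery bound for the Gaussian factor, and the tensorization giving the constant $\max\{\eps/2,\,1/(2\rho_\eps)\}$ are precisely the standard argument behind that cited proposition, so no comparison beyond this is needed.
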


	Next, we give a moment estimation on the solution $(X_t, Y_t)_{t \ge 0}$  to \eqref{eq:kl}.
	\begin{prop} \label{prop:conmoment}
		Let Assumptions~\ref{assumption1} and \ref{assumption2} hold. Then,
		$$
			C_0 ~:=~ \sup_{t\ge 0} \E \left[ U(x) + |Y_t|^2   \right] ~<~ \infty .
		$$
	\end{prop}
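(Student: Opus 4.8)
The plan is to construct a suitable Lyapunov function for the kinetic dynamics and exploit the dissipativity in Assumption~\ref{assumption1}(i) to obtain a uniform-in-time moment bound, following the strategy of Talay \cite{Talay02} and Mattingly, Stuart and Higham \cite{MattinglyStuartHigham02}. Since the noise enters only the $Y$-component, the naive candidate $|x|^2 + |y|^2$ does not yield a drift inequality with a good sign (the $\langle x, y\rangle$ cross term is not controlled), so I would instead work with the total ``energy plus twist'' function
\begin{equation*}
	V(x,y) := U(x) + \tfrac{1}{2}|y|^2 + \tfrac{\lambda}{2}|x|^2 + \lambda \langle x, y\rangle,
\end{equation*}
for a small constant $\lambda > 0$ to be chosen. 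The idea is that $U(x) + \tfrac12|y|^2$ is the ``Hamiltonian'' along which the deterministic part of \eqref{eq:kl} is (almost) conserved up to the friction term $-\theta|y|^2 = -|y|^2$, while the extra terms $\tfrac{\lambda}{2}|x|^2 + \lambda\langle x,y\rangle$ inject a coercive $-\lambda r|x|^2$ contribution in the $x$-direction through the dissipativity inequality $\nabla U(x)\cdot x \ge r|x|^2 - m$. Using \eqref{quadratic} one checks that for $\lambda$ small enough $V(x,y)$ is comparable to $U(x) + |y|^2 + |x|^2$, in particular bounded below, so that a bound on $\E[V(X_t,Y_t)]$ gives the claimed bound on $\E[U(X_t) + |Y_t|^2]$.

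The core computation is to apply Itô's formula and the generator of \eqref{eq:kl} to $V$. The generator is $\mathcal{A} = y\cdot\nabla_x - (\nabla U(x) + y)\cdot\nabla_y + \eps_t \Delta_y$. Computing $\mathcal{A}V$, the Hamiltonian part contributes $\nabla U(x)\cdot y - (\nabla U(x)+y)\cdot y = -|y|^2$; the term $\tfrac{\lambda}{2}|x|^2$ contributes $\lambda\langle x,y\rangle$; the cross term $\lambda\langle x,y\rangle$ contributes $\lambda|y|^2 - \lambda\langle x, \nabla U(x)\rangle - \lambda\langle x,y\rangle$; and the Laplacian gives $\eps_t d$. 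Collecting,
\begin{equation*}
	\mathcal{A}V(x,y) = -(1-\lambda)|y|^2 - \lambda\, \nabla U(x)\cdot x + \eps_t d \le -(1-\lambda)|y|^2 - \lambda r |x|^2 + \lambda m + \eps_t d,
\end{equation*}
where I used $\nabla U(x)\cdot x \ge r|x|^2 - m$. Since $\eps_t$ is non-increasing (Assumption~\ref{assumption2}(ii)), $\eps_t d \le \eps_0 d$ is bounded uniformly in $t$. Then, choosing $\lambda \in (0,1)$ small enough, one shows there exist constants $c > 0$ and $\widetilde{C} < \infty$ with $\mathcal{A}V \le -c V + \widetilde{C}$ for all $(x,y)$ and all $t\ge 0$: this again uses \eqref{quadratic} to dominate $V$ by a multiple of $|x|^2 + |y|^2 + 1$ and to absorb the $\lambda\langle x,y\rangle$ cross term via Young's inequality. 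A standard Gronwall argument on $t\mapsto \E[V(X_t,Y_t)]$ — justified because the polynomial growth of $U$ and all derivatives (Assumption~\ref{assumption1}(i)) together with the finite-moment assumption on $(X_0,Y_0)$ (Assumption~\ref{assumption2}(i)) legitimize applying Itô and taking expectations, after a localization/stopping-time argument — then yields
\begin{equation*}
	\E[V(X_t,Y_t)] \le e^{-ct}\,\E[V(X_0,Y_0)] + \frac{\widetilde{C}}{c}, \qquad t\ge 0,
\end{equation*}
which is bounded uniformly in $t$, and hence so is $\E[U(X_t) + |Y_t|^2]$.

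The main obstacle I anticipate is not the algebra of the drift inequality — which is routine once the right $V$ is in hand — but rather the rigorous justification of the Gronwall step: a priori one does not know $\E[V(X_t,Y_t)] < \infty$, so one must work with stopping times $\tau_n = \inf\{t : |X_t| + |Y_t| \ge n\}$, derive the inequality for the stopped process, and pass to the limit using Fatou on the left and the explicit constants on the right; the polynomial growth of the coefficients and the assumed finite moments of $(X_0, Y_0)$ of all orders (which in fact give more than enough for the stopping argument and ensure non-explosion) make this go through. A secondary point to be careful about is the choice of $\lambda$: it must be small enough both to keep $V$ bounded below and comparable to $U(x) + |x|^2 + |y|^2$ and to make the coefficient of $|y|^2$ negative, but these are compatible and any $\lambda < \min\{1, \text{something depending on } r, L\}$ works. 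I would also remark that exactly the same Lyapunov function, applied to higher powers $V^p$, gives the uniform bounds on $\E[|X_t|^m + |Y_t|^m]$ for all $m$ that are needed elsewhere (e.g.\ for the treatment of the term \eqref{eq:der4}), so the proof is written to make that extension transparent.
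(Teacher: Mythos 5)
Your proposal is correct and follows essentially the same route as the paper: a Lyapunov function built from the Hamiltonian $U(x)+\tfrac12|y|^2$ plus a small cross term $\lambda\,x\cdot y$, the dissipativity condition to get a drift inequality $\mathcal{A}V\le -cV+\widetilde C$ uniformly in $\eps$, Gr\"onwall, and a two-sided comparison of $V$ with $U(x)+|y|^2$. The only (cosmetic) difference is that you add $\tfrac{\lambda}{2}|x|^2$ so that the $\lambda\langle x,y\rangle$ terms cancel exactly in the generator, whereas the paper omits the $|x|^2$ term and absorbs the resulting cross term by Young's inequality; your explicit localization/stopping-time justification of the Gr\"onwall step is a welcome extra level of care.
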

	\begin{proof}
	Inspired by \cite[Lemma 7]{MonmarchePTRF18}, we consider the Lyapunov function
	$$
		R(x,y) := U(x) + \phi_1(x,y) + \beta \phi_2(x,y),
	$$
	where $\phi_1(x,y) := \frac{|y|^2}{2}$, $\phi_2(x,y) :=  x\cdot y $, and $\beta := \frac{1}{2} \frac{1}{1+\frac{1}{2r}} \frac{1}{1+\frac{1}{\sqrt{r/3}}} < \frac{1}{2}$ with the constant $r > 0$ as given in Assumption~\ref{assumption1}.

	For any smooth function $\phi: \mathbb{R}^d \times \mathbb{R}^d \longrightarrow  \mathbb{R}$,
	let $\CL_{\eps} \phi(x,y) := y\cdot \gr_x \phi(x,y)  - (\gr_x U + y) \cdot \gr_y \phi(x,y) + \eps \Delta_y \phi(x,y) $,
	which corresponds to the generator of the diffusion process \eqref{eq:kl} at temperature $\eps > 0$.
	Then
	$$ \CL_{\eps}  \big( U(x)+ \phi_1(x,y) \big) = -|y|^2 +\eps d. $$
	Further, by Assumption~\ref{assumption1},
	$$
		 \CL_{\eps} \phi_2(x,y)
		 ~=~ |y|^2 - x\cdot y - x \cdot \gr_x U(x)
		 ~\le~
		 \left( 1+ \frac{1}{4c_1} \right) |y|^2 + (c_1-r)|x|^2 +m
	$$
	for any constant $c_1>0$.
	Choosing $c_1=\frac{r}{2}$, and by \eqref{quadratic} as well as the fact that  $\beta<\frac{1}{2}$, it follows that
	\begin{eqnarray*}
		\CL_{\eps} R(x,y) & \le & -|y|^2 + \beta \left( 1+\frac{1}{4c_1}  \right) |y|^2 + \beta (c_1 -r) |x|^2 + \beta m + \eps d\\
		& = & -\left( 1-\beta\left( 1 + \frac{1}{2r}  \right)  \right) |y|^2 - \beta \frac{r}{2} |x|^2 + \beta m +\eps d \quad \\
		& \le & -c_3\beta \lf( U(x) + |y|^2\rh) +c_2,
	\end{eqnarray*}
	for some positive constants $c_2, c_3$ independent of $\eps$.
	Using again $\beta<\frac{1}{2}$ and that $U$ is of quadratic growth (see \eqref{quadratic}),
	we obtain that
	$$
		R(x,y)
		\le
		U(x) + \frac{|y|^2}{2} + \frac{1}{2} x\cdot y
		\le
		c_4\lf( 1+U(x) +|y|^2\rh),
	$$
	for some constant $c_4 > 0$.
	Therefore, for some positive constants $c_5$ and $c_6$ independent of $\eps > 0$,
	we have
	$$
		\CL_{\eps} R(x,y)
		~\le~
		 -c_3\beta \lf( \frac{R(x,y)}{c_4} -1\rh) +c_2
                 ~\le~ -c_5 R(x,y) +c_6.
	$$
	Notice that the Lyapunov function $R(x,y)$ does not depend on $\eps > 0$, so
	$$
		\frac{\d}{\d t}\E[R(X_t,Y_t)] = \E [\CL_{\eps_t} R(X_t, Y_t) ] \le -c_5 \E[R(X_t,Y_t)] + c_6.
	$$
	We can then apply the Gr\"{o}nwall  Lemma to conclude that $(\E[R(X_t,Y_t)])_{t \ge 0}$ is uniformly bounded.

	To conclude, we notice that (see also \cite[Lemma 7]{MonmarchePTRF18}) there is some constant $C > 0$ such that
	$$
		U(x) + |y|^2 \le C \big( R(x,y) + 1 \big),
		~\mbox{for all}~
		(x,y) \in \mathbb{R}^d \times \mathbb{R}^d.
	$$
	Therefore,
	$$
		C_0 ~:=~ \sup_{t\ge 0} \E \left[ U(x) + |Y_t|^2   \right]
		~\le~
		C + C \sup_{t \ge 0} \E \big[ R(X_t, Y_t) \big]
		~<~ \infty .
	$$
\end{proof}


\subsection{Evolution of distorted entropy with fixed temperature}
\label{s:condissipation}

	We now compute $\dr_{\mu, t} H_{\gamma(\eps_t)} (\mu_t | \mu^*_{\eps_t}) $ in \eqref{eq:der1}.
	Although the computation is mainly adapted from Ma, Chatterji, Cheng, Flammarion, Bartlett and Jordan \cite{MaBEJ21}, we nevertheless provide most of details for completeness.
	For simplicity, for a function $f: \R^d \times \R^d \longrightarrow \R$, we abbreviate $\int_{\R^d \times \R^d} f(z) dz$ to $\int f dz$.
	Recalling that $p_t$ (resp. $p^*_{\eps_t}$) represents the density function of $\mu_t$ (resp. $\mu^*_{\eps_t}$), we also write
	$$
		\E_{\mu_t} [f] := \int f p_t dz,
		~\;\;\;\;
		\E_{\mu^*_{\eps_t}} [f] := \int f p^*_{\eps_t} dz.
	$$

	Notice that the marginal distribution $p_t$ satisfies the kinetic Fokker-Planck equation
\begin{equation}
\label{eq:kFP}
    \dr_t p_t = -y\cdot\gr_x p_t + \gr_x U\cdot \gr_y p_t + \gr_y \cdot \left( yp_t  \right) + \eps_t \Delta_y p_t.
\end{equation}
Denote $\gr_z=(\gr_x,\gr_y)^{\T}$. Because $p_{\eps_t}^*\propto\exp\left( -\frac{1}{\eps_t}\left(U(x)+\frac{|y|^2}{2}  \right)   \right) $,
we can rewrite \eqref{eq:kFP} as the following equivalent form:
$$ \dr_t p_t +\gr_z \cdot \left(  p_t
\left( \begin{array}{c}
        -\eps_t\gr_y\log p^*_{\eps_t}  \\
        \eps_t \gr_x\log p_{\eps_t}^* - \eps_t \gr_y\log\frac{p_t}{p^*_{\eps_t}}
        \end{array}\right)  \right) =0 .  $$
To simplify the computation afterwards, we follow the idea in \cite{MaBEJ21} to further add a divergence-free term $\left( \begin{array}{c}
        \eps_t\gr_y\log p_t  \\
        -\eps_t \gr_x\log p_t
        \end{array}\right) $
into the equation. By direct computation, it follows that
\begin{equation}
    \label{FP1}
    \dr_t p_t + \gr_z \cdot \left( p_t v_t  \right) =0,
\end{equation}
where $v_t$ is the vector flow that transports $\mu_t$ towards $\mu^*_{\eps_t}$:
\begin{equation}
    \label{vel1}
    v_t=\eps_t
    \left( \begin{array}{cc}
      0   &  I_d \\
      -I_d   &  -I_d
    \end{array}
    \right)
    \left( \begin{array}{c}
      \gr_x \log \frac{p_t}{p_{\eps_t}^*}  \\
      \gr_y \log \frac{p_t}{p_{\eps_t}^*}
    \end{array}
    \right).
\end{equation}

Let $h_t :=\sqrt{\frac{p_t}{p^*_{\eps_t}}}$ and $S :=\lf(\begin{array}{cc}
    I_d & I_d  \\
    I_d & I_d
\end{array}\rh)$. Then the distorted relative entropy $ H_{\gamma(\eps_t)} (\mu_t | \mu^*_{\eps_t}) $ can be written as
\begin{eqnarray}
    \label{condisent}
    H_{\gamma(\eps_t)} (\mu_t|\mu^*_{\eps_t}) & = & \int \lf(  \lf\langle \gr_z\log\frac{p_t}{p^*_{\eps_t}}, S\,\gr_z\log \frac{p_t}{p^*_{\eps_t}}\rh\rangle + \gamma(t)\log \frac{p_t}{p^*_{\eps_t}} \rh) p_t \d z \no \\
     & = & 4\;\E_{\mu_t}\lf[\lf\langle \gr_z\log h_t, S\,\gr_z\log h_t \rh\rangle    \rh] + 2\gamma(t)\;\E_{\mu_t}\lf[\log h_t   \rh].
\end{eqnarray}

Denote as $\gr_z^*=(\gr_x^*, \gr_y^*) := \lf(-\gr_x\cdot + \frac{1}{\eps_t}\gr_x U\cdot\, , \, -\gr_y\cdot + \frac{y}{\eps_t} \cdot\rh)$ the adjoint operator of $\gr_z$ w.r.t. $\mu^*_{\eps_t}$, in the sense that for any $f,g\in L^2(\mu^*_{\eps_t})$, $\;\E_{\mu^*_{\eps_t}}[f\cdot\gr_z g] = \;\E_{\mu^*_{\eps_t}} [g\gr^*_z f] $.
	By \eqref{FP1}, we obtain that
\begin{eqnarray*}
 \dr_{\mu,t} H_{\gamma(\eps_t)} (\mu_t|\mu^*_{\eps_t}) & := & \lf.\frac{\d}{\d t} H_{\gamma} (\mu_t | \nu) \rh|_{\nu=\mu^*_{\eps_t},\gamma=\gamma(\eps_t)} \\
    & = & \int\lf. \frac{\delta H_{\gamma} (\mu_t | \nu)}{\delta \mu_t}\rh|_{\nu=\mu^*_{\eps_t},\gamma=\gamma(\eps_t)}  \dr_t p_t\, \d z\\
    & = & - \int \frac{\delta H_{\gamma(\eps_t)}(\mu_t | \mu^*_{\eps_t}) }{\delta \mu_t} \gr_z\cdot \lf( p_t v_t \rh)\, \d z \\
    & = & \int \gr_z \lf( \frac{\delta H_{\gamma(\eps_t)}(\mu_t | \mu^*_{\eps_t}) }{\delta \mu_t} \rh) \cdot v_t \, p_t\d z,
\end{eqnarray*}
where $\frac{\delta H_{\gamma(\eps_t)}(\mu_t | \mu^*_{\eps_t}) }{\delta \mu_t}$ is the first order variational derivative of $H_{\gamma(\eps_t)}(\mu_t | \mu^*_{\eps_t})$ at $\mu_t$. By Proposition 2.22 in \cite{LelievreStoltz16}, we have
$$\frac{\delta H_{\gamma(\eps_t)}(\mu_t | \mu^*_{\eps_t})}{\delta \mu_t} = \frac{4}{h_t}\gr_z^* S \gr_z h_t + 2\gamma(\eps_t)\log h_t + \gamma(\eps_t). $$
Then using \eqref{vel1} and \eqref{condisent}, a simple calculation leads to
\begin{eqnarray}
       & &  \dr_{\mu,t} H_{\gamma(\eps_t)} (\mu_t|\mu^*_{\eps_t})\no\\
       & = & \int \gr_z \lf( \frac{4}{h_t}\gr_z^* S \gr_z h_t + 2\gamma(\eps_t)\log h_t  \rh) \cdot v_t \, p_t\d z \no \\
       & = & \label{ex1} -4\eps_t\gamma(\eps_t)\int \lf\langle \gr_z h_t\, , \,  \begin{pmatrix}
      0   &  -I_d \\
      I_d   &  I_d
       \end{pmatrix} \gr_z h_t \rh\rangle  p^*_{\eps_t}\d z \\
    &  & \label{ex2} + 8\eps_t\int \lf\langle \gr_z h_t\, , \, \begin{pmatrix}
      0   &  -I_d \\
      I_d   &  I_d
       \end{pmatrix}  \gr_z h_t \rh\rangle \frac{\gr_z^*S\gr_z h_t}{h_t} \,p^*_{\eps_t}\d z\\
       & & \label{ex3} -8\eps_t\int \lf\langle \gr_z\gr_z^*S\gr_z h_t\, , \, \begin{pmatrix}
      0   &  -I_d \\
      I_d   &  I_d
       \end{pmatrix}  \gr_z h_t\rh\rangle \, p^*_{\eps_t}\d z.
\end{eqnarray}

For \eqref{ex1}, it equals to
\begin{eqnarray}  \label{ex11}
     \;\;\;\;\;
    -4\eps_t \gamma(\eps_t)\, \;\E_{\mu^*_{\eps_t}}  \big[ \big|\gr_y h_t \big|^2 \big]
       ~=~
       -\eps_t \gamma(\eps_t)\, \;\E_{\mu_t} \Big[ \Big|\gr_y\log\frac{p_t}{p^*_{\eps_t}} \Big|^2 \Big].
\end{eqnarray}
In addition, \eqref{ex2} can be simplified as
\begin{eqnarray}
       &   & 8\eps_t\int \lf\langle \gr_z h_t\, , \, \begin{pmatrix}
      0   &  -I_d \\
      I_d   &  I_d
       \end{pmatrix}  \gr_z h_t \rh\rangle \frac{\gr_z^*S\gr_z h_t}{h_t} \,p^*_{\eps_t}\d z \no \\
       & = & 8\eps_t\, \;\E_{\mu^*_{\eps_t}} \lf[ \frac{|\gr_y h_t|^2}{h_t} \gr_z^*S\gr_z h_t \rh] \no\\
      & = & 8\eps_t \;\E_{\mu^*_{\eps_t}} \lf[ \lf\langle \frac{\gr_z|\gr_y h_t|^2}{h_t} \, , \, S\gr_z h_t \rh\rangle \rh]
      - 8\eps_t \;\E_{\mu^*_{\eps_t}} \lf[ \lf\langle \frac{|\gr_y h_t|^2}{h_t^2}\gr_z h_t \, , \, S\gr_z h_t \rh\rangle \rh]     \no   \\
      & = & \label{ex21} 16\eps_t \;\E_{\mu^*_{\eps_t}} \lf[ \lf\langle \frac{\gr_z h_t}{h_t} (\gr_y h_t)^{\T} \, , \, S\gr_z\gr_y h_t \rh\rangle_F \rh] \\
      &  & - 8\eps_t \;\E_{\mu^*_{\eps_t}} \lf[ \lf\langle \frac{\gr_z h_t}{h_t} (\gr_y h_t)^{\T} \, , \, S \frac{\gr_z h_t}{h_t} (\gr_y h_t)^{\T} \rh\rangle_F \rh]. \no
\end{eqnarray}
For \eqref{ex3}, we write it as the sum of the following three terms:
\begin{eqnarray*}
     I_1  & := & \label{ex31} -8\eps_t\; \;\E_{\mu_{\eps_t}^*} \lf[ \lf\langle \gr_z (\gr_x^*\gr_y + \gr_y^*\gr_x) h_t\, , \, \begin{pmatrix}
      0   &  -I_d \\
      I_d   &  I_d
       \end{pmatrix}  \gr_z h_t\rh\rangle \rh], \\
    I_2  & := & \label{ex32} -8\eps_t \; \;\E_{\,u_{\eps_t}^*} \lf[ \lf\langle \gr_z \gr_x^*\gr_x  h_t\, , \, \begin{pmatrix}
      0   &  -I_d \\
      I_d   &  I_d
       \end{pmatrix}  \gr_z h_t\rh\rangle \rh],\\
      I_3 & := & \label{ex33} -8\eps_t \; \;\E_{\mu_{\eps_t}^*} \lf[ \lf\langle \gr_z \gr_y^*\gr_y  h_t\, , \, \begin{pmatrix}
      0   &  -I_d \\
      I_d   &  I_d
       \end{pmatrix}  \gr_z h_t\rh\rangle \rh].
\end{eqnarray*}
	Further, by \cite[Lemma 9]{MaBEJ21}, we have
	\begin{eqnarray*}
           I_1 & = & 8\; \;\E_{\mu^*_{\eps_t}} \lf[\langle \gr_y h_t, \gr_x^2 U\gr_y h_t\rangle - \langle \gr_x h_t, \gr_x h_t + \gr_y h_t  \rangle   \rh]\\
            &  & -16\eps_t \; \;\E_{\mu^*_{\eps_t}} \lf[ \langle \gr_y\gr_x h_t , \gr_y^2 h_t \rangle_F   \rh], \\
       I_2 &=& 8\; \;\E_{\mu_{\eps_t}^*} \lf[ \langle \gr_x h_t, \gr_x^2 U \gr_y h_t \rangle \rh] - 8\eps_t \; \;\E_{\mu_{\eps_t}^*} \lf[\langle \gr_x \gr_y h_t, \gr_x \gr_y h_t \rangle_F  \rh], \\
        I_3  &=&  -8 \; \;\E_{\mu^*_{\eps_t}} \lf[ \langle \gr_x h_t + \gr_y h_t , \gr_y h_t \rangle + \eps_t \langle \gr_y^2 h_t, \gr_y^2 h_t \rangle_F \rh].
    \end{eqnarray*}
	Consequently, we obtain that \eqref{ex3} equals 
	\begin{multline*}
		- 8\;\;\E_{\mu_{\eps_t}^*} \lf[ \lf\langle \gr_x h_t , \gr_x h_t + (2I_d-\gr_x^2 U) \gr_y h_t  \rh\rangle \rh] \\
		 - 8  \; \;\E_{\mu^*_{\eps_t}} \lf[ \eps_t \lf\langle \gr_z\gr_y h_t, S\gr_z\gr_y h_t  \rh\rangle_F  + \lf\langle \gr_y h_t, (I_d-\gr_x^2 U) \gr_y h_t    \rh\rangle  \rh].
	\end{multline*}

	Combining the above with \eqref{ex11} and \eqref{ex21}, we derive
\begin{eqnarray*}
       &  & \dr_{\mu,t} H_{\gamma(\eps_t)} (\mu_t|\mu^*_{\eps_t})\\
       & = & -8\eps_t \;\E_{\mu_{\eps_t}^*} \Big[ \big\langle
       \gr_z\gr_y h_t - (\gr_z\log h_t) (\gr_y h_t)^{\T} ,
       S \lf( \gr_z\gr_y h_t - (\gr_z\log h_t) (\gr_y h_t)^{\T}  \rh)
       \big \rangle_F    \Big]\\
       & & -4 \;\E_{\mu_{\eps_t}^*} \lf[\lf\langle \gr_z h_t, M_t \gr_z h_t \rh\rangle  \rh] \\
       & = & -8 \eps_t \;\E_{\mu_t} \lf[ \lf\langle
       \gr_z\gr_y \log h_t , S \gr_z\gr_y \log h_t
       \rh\rangle_F    \rh] -4 \;\E_{\mu_t} \lf[\lf\langle \gr_z \log h_t, M_t \gr_z \log h_t \rh\rangle  \rh],
\end{eqnarray*}
with $M_t=\begin{pmatrix}
 2I_d & 2I_d -\gr_x^2 U\\
 2I_d - \gr_x^2 U & \lf( 2+ \gamma(\eps_t)\eps_t  \rh)I_d - 2\gr_x^2 U
\end{pmatrix} \in\M_{2d\times 2d}(\R) $.
Because $S=\begin{pmatrix}
 I_d & I_d\\
 I_d & I_d
\end{pmatrix}
\succeq 0 $, we obtain
can get rid of the term involving with the second-order derivatives and obtain that
\begin{eqnarray*}
       \dr_{\mu,t} H_{\gamma(\eps_t)} (\mu_t|\mu^*_{\eps_t}) & \le & -4 \;\E_{\mu_t} \lf[\lf\langle \gr_z \log h_t, M_t \gr_z \log h_t \rh\rangle  \rh]  \\
       & = & - \;\E_{\mu_t} \lf[\lf\langle \gr_z \log \frac{p_t}{p_{\eps_t}^*}, M_t \gr_z \log \frac{p_t}{p_{\eps_t}^*} \rh\rangle  \rh].
\end{eqnarray*}

	Note that the r.h.s. of the above inequality resembles the Fisher information $I(\mu_t|\mu^*_{\eps_t}) = \;\E_{\mu_t} \lf[\lf\langle \gr_z \log \frac{p_t}{p_{\eps_t}^*} , \gr_z \log \frac{p_t}{p_{\eps_t}^*} \rh\rangle  \rh]$.
	In order to use the log-Sobolev inequality satisfied by $\mu^*_{\eps_t}$ (Proposition~\ref{prop:LSI}) and connect it to the distorted relative entropy \eqref{condisent},
	we will prove in Lemma \ref{Mpsd} that, for sufficiently large $t \ge 0$,
	\begin{equation} \label{eq:claimMt}
		M_t ~\succeq~ c(\eps_t)\rho_{\eps_t} \lf( S  + \frac{\gamma(\eps_t)}{2\rho_{\eps_t}} I_{2d} \rh),
	\end{equation}
	where $c(\eps_t) := \frac{1}{\gamma(\eps_t)} = \frac{\eps_t}{4\lf( 1+ L^2 \rh)}$ and $\rho_{\eps_t}$ is given in Proposition~\ref{prop:LSI}.
	It follows that
	\begin{eqnarray*}
		&&\!\!\!
		\dr_{\mu,t} H_{\gamma(\eps_t)} (\mu_t|\mu^*_{\eps_t}) \\
		\!\!\! & \le &\!\!\! - c(\eps_t)\rho_{\eps_t} \E_{\mu_t} \!\! \lf[\lf\langle \gr_z \log \frac{p_t}{p_{\eps_t}^*}, S \gr_z \log \frac{p_t}{p_{\eps_t}^*} \rh\rangle  + \frac{\gamma(\eps_t)}{2\rho_{\eps_t}} \lf\langle \gr_z \log \frac{p_t}{p_{\eps_t}^*} , \gr_z \log \frac{p_t}{p_{\eps_t}^*} \rh\rangle \! \rh]\\
		& \le &\!\!\! - c(\eps_t)\rho_{\eps_t} \;\E_{\mu_t} \lf[\lf\langle \gr_z \log \frac{p_t}{p_{\eps_t}^*}, S \gr_z \log \frac{p_t}{p_{\eps_t}^*} \rh\rangle  + \gamma(\eps_t) \KL (\mu_t | \mu^*_{\eps_t}) \rh]\\
		& = &\!\!\! - c(\eps_t)\rho_{\eps_t} H_{\gamma(\eps_t)}(\mu_t|\mu^*_{\eps_t}).
	\end{eqnarray*}

	\begin{lem} \label{Mpsd}
		Let Assumptions~\ref{assumption1} and \ref{assumption2} hold true.
		Then, for sufficiently large $t$, $M_t$ satisfies \eqref{eq:claimMt}.
	\end{lem}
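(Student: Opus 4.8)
The plan is to diagonalize away the only non-scalar ingredient of \eqref{eq:claimMt}, namely the Hessian $\gr_x^2 U$, and thereby reduce the claim to a one-parameter family of $2\times 2$ matrix inequalities that can be checked by hand. First I would fix $z=(x,y)$ and choose an orthogonal matrix $O=O(x)$ with $O^{\T}\gr_x^2 U(x)\,O=\diag(\lambda_1,\dots,\lambda_d)$; since $\gr U$ is $L$-Lipschitz, each $\lambda_i=\lambda_i(x)$ lies in the fixed compact interval $[-L,L]$. Every $d\times d$ block occurring in $M_t$, in $S$ and in $I_{2d}$ is a scalar multiple of $I_d$ apart from the single occurrence of $\gr_x^2 U$, so conjugating by $O\oplus O$ and applying a permutation of the coordinates turns \eqref{eq:claimMt} into the $d$ scalar inequalities
\[
    \begin{pmatrix} 2 & 2-\lambda \\ 2-\lambda & 2+\gamma(\eps_t)\eps_t-2\lambda\end{pmatrix}
    ~\succeq~
    s\begin{pmatrix}1 & 1\\ 1 & 1\end{pmatrix}+\frac{c(\eps_t)\gamma(\eps_t)}{2}\,I_2,
    \qquad \lambda\in[-L,L],
\]
where $s:=c(\eps_t)\rho_{\eps_t}$. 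The key algebraic point, built into the choice $\gamma(\eps)=4(1+L^2)/\eps$, is that $c(\eps_t)\gamma(\eps_t)=1$: this makes the second term on the right exactly $\tfrac12 I_2$, \emph{independent of $t$}, even though the single factor $\gamma(\eps_t)/\rho_{\eps_t}$ diverges as $t\to\infty$; and $\gamma(\eps_t)\eps_t=4(1+L^2)$ is likewise a $t$-independent constant, so the left-hand matrix does not depend on $t$ either.

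Write $D_\lambda(s)$ for the difference of the two sides, i.e.
\[
    D_\lambda(s):=\begin{pmatrix} \tfrac32-s & 2-\lambda-s\\ 2-\lambda-s & \tfrac32+4(1+L^2)-2\lambda-s\end{pmatrix},
\]
so the reduced claim is $D_\lambda(s)\succeq 0$ for all $\lambda\in[-L,L]$. For a symmetric $2\times2$ matrix this is equivalent to $\tr D_\lambda(s)\ge 0$ and $\det D_\lambda(s)\ge 0$, and a short computation gives $\tr D_\lambda(s)=3+4(1+L^2)-2\lambda-2s$ and $\det D_\lambda(s)=\tfrac{17}{4}+6L^2+\lambda-\lambda^2-(3+4L^2)\,s$. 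At $s=0$ the trace is minimized over $[-L,L]$ at $\lambda=L$, with value $7+4L^2-2L>0$, and the determinant is minimized at $\lambda=-L$, with value $\tfrac{17}{4}+5L^2-L>0$ (both quadratics in $L$ have negative discriminant). The $\lambda$-dependence of each expression is unaffected by $s$, and both are decreasing in $s$, so there is a threshold $\eta=\eta(L)>0$ with $D_\lambda(s)\succeq 0$ for every $\lambda\in[-L,L]$ whenever $0\le s<\eta$.

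It then remains only to note that $s=c(\eps_t)\rho_{\eps_t}\to 0$ as $t\to\infty$: indeed $\eps_t=E/\log t\to 0$, hence $c(\eps_t)=\eps_t/(4(1+L^2))\to 0$, while $\rho_{\eps_t}=\chi(\eps_t)e^{-E_*/\eps_t}\to 0$ since the sub-exponential factor $\chi$ cannot offset the exponential. Choosing $t_1$ with $c(\eps_t)\rho_{\eps_t}<\eta$ for all $t\ge t_1$ gives $D_\lambda(c(\eps_t)\rho_{\eps_t})\succeq 0$ for every $\lambda\in[-L,L]$, and reassembling the $d$ scalar blocks yields \eqref{eq:claimMt} for every $z$; the threshold $t_1$ is uniform in $z$ precisely because the only spatial dependence enters through the eigenvalues $\lambda_i(x)\in[-L,L]$, a fixed compact set. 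I do not expect a genuine obstacle: the content is the bookkeeping that isolates the $t$-independent $\tfrac12 I_2$ contribution from the diverging factor $\gamma(\eps_t)/\rho_{\eps_t}$, together with checking that the constant $4(1+L^2)$ in the definition of $\gamma$ is large enough to keep $\det D_\lambda(0)$ positive for every $\lambda\in[-L,L]$ — which is exactly the constraint that motivates that choice of $\gamma$.
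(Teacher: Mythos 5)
Your proposal is correct and follows essentially the same route as the paper: exploiting $c(\eps_t)\gamma(\eps_t)=1$ and $\gamma(\eps_t)\eps_t=4(1+L^2)$, diagonalizing $\gr_x^2 U$ with eigenvalues in $[-L,L]$, and checking positivity of the resulting $2\times 2$ blocks — your trace/determinant conditions are exactly the coefficient conditions of the quadratic characteristic equation the paper writes down, after which the vanishing factor $c(\eps_t)\rho_{\eps_t}$ absorbs the $S$ term. The only difference is cosmetic: you carry the parameter $s=c(\eps_t)\rho_{\eps_t}$ through the block reduction and make the uniform-in-$x$ threshold $\eta(L)$ explicit, a point the paper leaves implicit when it asserts that $M_t-\tfrac12 I_{2d}\succ 0$ suffices.
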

	\begin{proof}
	The proof is very similar to that of \cite[Lemma 8]{MonmarchePTRF18}, so we only present the main idea here.

	By the choice of $\gamma(\eps_t)$ and $c(\eps_t)$, we have that
	$(\eps_t, c(\eps_t) ,\rho_{\eps_t}, \gamma(\eps_t)) \longrightarrow (0, 0, 0, \infty)$ as $t \longrightarrow \infty$.
	Thus, it is sufficient to prove that for sufficiently large $t \ge 0$,
	$$
		M_t -\frac{1}{2} c(\eps_t)\gamma(\eps_t) I_{2d}
		~=~
		M_t -\frac{1}{2} I_{2d}
		~\succ~
		0 .
	$$
	We first compute the characteristic equation of $M_t-\frac{1}{2} I_{2d}$:
	\begin{eqnarray*}
		&&
		\det \lf( M_t -\frac{1}{2} I_{2d} -\lambda I_{2d} \rh) \\
		&=&
		\det \begin{pmatrix}
		\lf(\frac{3}{2} -\lambda \rh) I_{d} & 2I_d -\gr_x^2 U\\
		2I_d -\gr_x^2 U  &  \lf( \frac{11}{2} - \lambda + 4L^2  \rh)I_d -2\gr_x^2 U
		\end{pmatrix}\\
		&=&
		\det \lf( \lf(\frac{3}{2} -\lambda\rh) \lf(\frac{11}{2} -\lambda + 4 L^2 \rh) I_d - 2 \lf(\frac{3}{2}-\lambda\rh) \gr_x^2 U - \lf(2I_d -\gr_x^2 U\rh)^2  \rh).
	\end{eqnarray*}
	By diagonalizing $\gr_x^2 U$, with $(\gr_x^2 U)_i$ denoting the $i$-th eigenvalue of $\gr_x^2 U$,
	we obtain $d$ quadratic equations on $\lambda$ in the form
	\begin{equation*}
		\lambda^2 - \lf( 7+4L^2 -2 (\gr_x^2 U)_i  \rh)\lambda + \lf( \frac{17}{4} + 6L^2 - (\gr_x^2 U)^2_i + (\gr_x^2 U)_i  \rh) =0,
	\end{equation*}
	whose roots are the eigenvalues of $M_t-\frac{1}{2}I_{2d}$.
	Using the fact that $|(\gr_x^2 U)_i| \le L $ (as $\gr U$ is $L$-Lipschitz), we can check that all roots of the above equations are positive,
	so all eigenvalues of $M_t -\frac{1}{2}I_{2d}$ are positive.
	This concludes the proof.
\end{proof}

\begin{rmk}
\label{rmkfiniteH}
	The previous lemma only ensures $M_t \succeq c(\eps_t)\rho_{\eps_t} \lf( S  + \frac{\gamma(\eps_t)}{2\rho_{\eps_t}} I_{2d} \rh)$ for sufficiently large $t$. However, this is sufficient to obtain the convergence of our kinetic annealing process \eqref{eq:kl}, as long as we can ensure the distorted relative entropy $H_{\gamma(\eps_t)}(\mu_t|\mu^*_{\eps_t})$ is finite in any finite time interval (see Lemma~\ref{finiteH} below).
	We also want to point out that the choice of $\gamma(\eps_t)$ and $c(\eps_t) $ is not unique to ensure that $M_t$ satisfies \eqref{eq:claimMt}.
\end{rmk}

	To conclude this subsection, we summarize the above computation in the following proposition:
	\begin{prop} \label{condissipation}
		Let Assumptions~\ref{assumption1} and \ref{assumption2} hold.
		Then, for sufficiently large $t \ge 0$, we have
		\begin{equation*}
			\dr_{\mu, t} H_{\gamma(\eps_t)}(\mu_t|\mu^*_{\eps_t}) \le - c(\eps_t)\rho_{\eps_t} H_{\gamma(\eps_t)}(\mu_t|\mu^*_{\eps_t}),
		\end{equation*}
		where $c(\eps_t) := \frac{\eps_t}{4\lf( 1+ L^2 \rh)}$ and $\rho_{\eps_t}$ is given in Proposition~\ref{prop:LSI}.
	\end{prop}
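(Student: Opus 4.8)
The plan is to assemble Proposition~\ref{condissipation} directly from the pieces already developed in this subsection, so that the proposition is really a summary statement rather than a fresh argument. First I would recall the exact-form computation carried out above: starting from the Fokker--Planck equation \eqref{FP1}--\eqref{vel1} for $p_t$ at frozen temperature $\eps_t$, the chain rule for the variational derivative of $H_\gamma$ (via Proposition~2.22 in \cite{LelievreStoltz16}), and the sequence of integration-by-parts identities (the $I_1, I_2, I_3$ decomposition together with \cite[Lemma~9]{MaBEJ21}), we arrived at the clean expression
\begin{equation*}
	\dr_{\mu,t} H_{\gamma(\eps_t)}(\mu_t|\mu^*_{\eps_t})
	= -8\eps_t \E_{\mu_t}\!\lf[\lf\langle \gr_z\gr_y\log h_t, S\,\gr_z\gr_y\log h_t\rh\rangle_F\rh]
	- 4\,\E_{\mu_t}\!\lf[\lf\langle \gr_z\log h_t, M_t\,\gr_z\log h_t\rh\rangle\rh],
\end{equation*}
with $M_t$ as defined above. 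Since $S\succeq 0$, the first term is $\le 0$, so it may be discarded, yielding
$\dr_{\mu,t} H_{\gamma(\eps_t)}(\mu_t|\mu^*_{\eps_t}) \le -\,\E_{\mu_t}[\langle \gr_z\log(p_t/p^*_{\eps_t}), M_t\,\gr_z\log(p_t/p^*_{\eps_t})\rangle]$.

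Next I would invoke Lemma~\ref{Mpsd}, which is valid for $t$ sufficiently large and gives the matrix inequality \eqref{eq:claimMt}, namely $M_t\succeq c(\eps_t)\rho_{\eps_t}\lf(S + \tfrac{\gamma(\eps_t)}{2\rho_{\eps_t}}I_{2d}\rh)$. Substituting this lower bound into the quadratic form and splitting the two pieces, the $S$-piece reproduces (up to the factor $4$ absorbed in the definition \eqref{condisent}) the distorted-entropy term $4\,\E_{\mu_t}[\langle\gr_z\log h_t,S\gr_z\log h_t\rangle]$, while the $\tfrac{\gamma(\eps_t)}{2\rho_{\eps_t}}I_{2d}$-piece produces $\tfrac{\gamma(\eps_t)}{2\rho_{\eps_t}}I(\mu_t|\mu^*_{\eps_t})$. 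Applying the log-Sobolev inequality of Proposition~\ref{prop:LSI} — precisely, $\KL(\mu_t|\mu^*_{\eps_t})\le \tfrac{1}{2\rho_{\eps_t}}I(\mu_t|\mu^*_{\eps_t})$ once $\rho_{\eps_t}\le 1/\eps_t$, which holds for large $t$ since $\rho_{\eps_t}\to 0$ — bounds $\tfrac{\gamma(\eps_t)}{2\rho_{\eps_t}}I(\mu_t|\mu^*_{\eps_t})\ge \gamma(\eps_t)\KL(\mu_t|\mu^*_{\eps_t})$. Collecting the $S$-term and the $\gamma(\eps_t)\KL$-term exactly reconstitutes $H_{\gamma(\eps_t)}(\mu_t|\mu^*_{\eps_t})$ by \eqref{disent}, giving the claimed inequality $\dr_{\mu,t}H_{\gamma(\eps_t)}(\mu_t|\mu^*_{\eps_t})\le -c(\eps_t)\rho_{\eps_t}H_{\gamma(\eps_t)}(\mu_t|\mu^*_{\eps_t})$.

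Since most of the computation is already displayed in the text preceding the proposition, the proof of Proposition~\ref{condissipation} is essentially a one-paragraph synthesis: ``This is an immediate consequence of the computation above, the matrix bound of Lemma~\ref{Mpsd}, and the log-Sobolev inequality of Proposition~\ref{prop:LSI}.'' The only genuine subtlety — and hence the step I would flag as the main obstacle — is the justification that the formal manipulations (differentiation under the integral sign, the integration-by-parts identities, discarding boundary terms at infinity) are legitimate for the kinetic Fokker--Planck flow. This rests on the smoothness and strict positivity of $p_t$ (asserted after \eqref{eq:kFP}, analogous to \cite[Proposition~4]{MonmarchePTRF18}), the polynomial-growth control on the derivatives of $U$ from Assumption~\ref{assumption1}, the uniform moment bound of Proposition~\ref{prop:conmoment}, and the finiteness of $H_{\gamma(\eps_t)}(\mu_t|\mu^*_{\eps_t})$ on finite time windows (deferred to Lemma~\ref{finiteH}). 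I would therefore either reference those results for the integrability of each term, or — following the stated convention that the computation is ``mainly adapted from \cite{MaBEJ21}'' — simply note that the identities are established exactly as in \cite[Section~4]{MaBEJ21} and \cite[Lemma~8]{MonmarchePTRF18}, with the time-dependence of $\eps_t$ entering only through the frozen coefficient and hence not affecting any of the spatial integrations.
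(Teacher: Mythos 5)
Your proposal is correct and takes essentially the same route as the paper: the paper's proof of Proposition~\ref{condissipation} is exactly the computation displayed before it — the identity for $\dr_{\mu,t}H_{\gamma(\eps_t)}(\mu_t|\mu^*_{\eps_t})$, discarding the nonpositive second-order term via $S\succeq 0$, the matrix bound \eqref{eq:claimMt} from Lemma~\ref{Mpsd}, and the log-Sobolev inequality of Proposition~\ref{prop:LSI} (with the max equal to $\tfrac{1}{2\rho_{\eps_t}}$ for large $t$) to reassemble $H_{\gamma(\eps_t)}$. Your closing remarks about justifying the formal manipulations match what the paper handles by citing \cite{MaBEJ21}, \cite{MonmarchePTRF18} and deferring finiteness to Lemma~\ref{finiteH} (see Remark~\ref{rmkfiniteH}).
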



\subsection{Evolution of distorted entropy with cooling temperature}
\label{s:33}

	In this subsection, we will compute $\dr_{\eps,t} H_{\gamma(\eps_t)}(\mu_t | \mu^*_t) $ as defined in \eqref{eq:der2}.
	Recall that $C_0$ is the upper bound of $\E[ U(X_t) + |Y_t|^2]$ as defined in Proposition \ref{prop:conmoment}.
	Denote by $\eps'_t$ the derivative of $t \longmapsto \eps_t$.

	\begin{lem} \label{conepsilon}
		Let Assumptions~\ref{assumption1} and \ref{assumption2} hold true.
		Then, there exists some sub-exponential function $\eps \longmapsto \omega(\eps)$, such that
		\begin{equation*}
			\dr_{\eps,t} H_{\gamma(\eps_t)}(\mu_t|\mu^*_{\eps_t}) \le |\eps_t'| \omega(\eps_t) \lf( H_{\gamma(\eps_t)}(\mu_t|\mu^*_{\eps_t}) + 1 + C_0 \rh).
		\end{equation*}
	\end{lem}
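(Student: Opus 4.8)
The plan is to differentiate $H_{\gamma(\eps)}(\mu \mid \mu^*_\eps)$ with respect to $\eps$ while holding $\mu = \mu_t$ fixed, using the explicit Gaussian-Gibbs form of $p^*_\eps$. Write
$$
\log \frac{p_t}{p^*_\eps}(z) = \log p_t(z) - \log C_\eps + \frac{1}{\eps}\Big(U(x) + \tfrac{|y|^2}{2}\Big),
$$
so that both $\nabla_z \log\frac{p_t}{p^*_\eps}$ and $\log\frac{p_t}{p^*_\eps}$ depend on $\eps$ only through the scalar $1/\eps$ multiplying $W(z) := U(x) + |y|^2/2$ (and through the constant $\log C_\eps$, whose $\eps$-derivative is $-\eps^{-2}\,\E_{\mu^*_\eps}[W]$ by differentiating the normalization identity). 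Also $\gamma(\eps) = 4(1+L^2)/\eps$ so $\gamma'(\eps) = -\gamma(\eps)/\eps$. Carrying out the chain rule, $\partial_{\eps,t} H_{\gamma(\eps_t)}(\mu_t\mid\mu^*_{\eps_t})$ becomes $\eps'_t$ times a sum of terms, each of which is an expectation under $\mu_t$ of a polynomial-in-$W$ weight times either $1$, $\log\frac{p_t}{p^*_{\eps_t}}$, or the quadratic form $\langle \nabla_z\log\frac{p_t}{p^*_{\eps_t}}, S\,\nabla_z\log\frac{p_t}{p^*_{\eps_t}}\rangle$; the $1/\eps^k$ prefactors collect into a single sub-exponential factor.

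The key steps, in order: (1) differentiate the normalization $\int p^*_\eps\,\d z = 1$ to get $\partial_\eps \log C_\eps = -\eps^{-2}\E_{\mu^*_\eps}[W]$, and note $\E_{\mu^*_\eps}[W] = \O(\eps)$ by a direct Gaussian-type estimate (or via \eqref{quadratic}), so this term is harmless. (2) Apply the chain rule to \eqref{disent} with $\mu = \mu_t$ fixed: the derivative of the $\gamma\log\frac{\d\mu}{\d\nu}$ part produces $\gamma'(\eps_t)\E_{\mu_t}[\log\frac{p_t}{p^*_{\eps_t}}] + \gamma(\eps_t)\partial_{\eps}\E_{\mu_t}[\log\frac{p_t}{p^*_{\eps_t}}]$, and the derivative of the gradient-square part produces cross terms of the form $\E_{\mu_t}[\langle \nabla_z\log\frac{p_t}{p^*_{\eps_t}}, S\,\nabla_z(\eps_t^{-1}W)\rangle]$. (3) Bound each resulting term: use Cauchy--Schwarz to split any cross term between a factor controlled by $H_{\gamma(\eps_t)}(\mu_t\mid\mu^*_{\eps_t})$ (i.e. the full distorted entropy, since $S \succeq 0$ and the $\KL$ part dominates $\E_{\mu_t}[\log\frac{p_t}{p^*_{\eps_t}}]$ up to sign — here one uses that $\KL \ge 0$ and $H_\gamma$ controls $\gamma\cdot\KL$) and a factor that is a moment of $W$ or $|\nabla_z W|^2 = |\nabla U(x)|^2 + |y|^2$ under $\mu_t$, which by Assumption~\ref{assumption1} (polynomial growth, Lipschitz $\nabla U$) is bounded by $C(1 + \E_{\mu_t}[U(X_t) + |Y_t|^2]) \le C(1+C_0)$ using Proposition~\ref{prop:conmoment}. (4) Collect all the $\eps_t^{-k}$ prefactors into one function $\omega(\eps_t)$ and check it is sub-exponential, i.e. $\eps\log\omega(\eps)\to 0$; since $\omega$ is merely a fixed negative power of $\eps$ times constants, $\eps\log\omega(\eps)\sim -k\,\eps\log\eps \to 0$, so this holds trivially.

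The main obstacle I expect is step (3) for the gradient cross terms: one has to show that $\E_{\mu_t}\big[\big|\langle \nabla_z\log\tfrac{p_t}{p^*_{\eps_t}}, S\,\nabla_z W\rangle\big|\big]$ is dominated by (something sub-exponential)$\times(H_{\gamma(\eps_t)}(\mu_t\mid\mu^*_{\eps_t}) + 1 + C_0)$. By Cauchy--Schwarz this is at most $\E_{\mu_t}[\langle \nabla_z\log\tfrac{p_t}{p^*_{\eps_t}}, S\,\nabla_z\log\tfrac{p_t}{p^*_{\eps_t}}\rangle]^{1/2} \cdot \E_{\mu_t}[\langle \nabla_z W, S\,\nabla_z W\rangle]^{1/2}$; the first factor squared is bounded by $H_{\gamma(\eps_t)}(\mu_t\mid\mu^*_{\eps_t})$ directly from \eqref{disent}, and the second by $C(1+C_0)$ from the moment bound, and then one uses $ab \le \tfrac12(a^2+b^2)$ to convert the product into a sum — this is where the ``$+1$'' and the $C_0$ in the statement come from. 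A secondary technical point is justifying the differentiation under the integral sign (interchanging $\partial_\eps$ with $\int\,\d z$), which is legitimate because $p_t$ has rapidly decaying smooth density and all the $\eps$-dependence is through smooth, polynomially-controlled factors; I would either invoke this directly or note it follows from the finiteness of $H_{\gamma(\eps_t)}(\mu_t\mid\mu^*_{\eps_t})$ on finite time intervals (Lemma~\ref{finiteH}) together with dominated convergence.
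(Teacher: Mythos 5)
Your proposal is correct and follows essentially the same route as the paper's proof: differentiate in $\eps$ using the explicit Gibbs form of $p^*_{\eps}$ (so that $\partial_\eps\log p^*_\eps$ is, up to the normalization term, $\eps^{-2}\big(U(x)+|y|^2/2\big)$ and $\gr_z\partial_\eps\log p^*_\eps=\eps^{-2}(\gr_x U, y)^{\T}$), split the resulting cross terms by Cauchy--Schwarz/Young between the quadratic-form part of $H_{\gamma(\eps_t)}$ (and $\gamma\KL\le H_{\gamma}$ for the $\gamma'$ term) and moments of $|\gr_x U(X_t)|^2+|Y_t|^2$ controlled via the $L$-Lipschitz bound, \eqref{quadratic} and Proposition~\ref{prop:conmoment}, and finally absorb the polynomial-in-$1/\eps$ prefactors into a sub-exponential $\omega$. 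This is exactly the paper's argument (which uses the $\tilde S=(I_d,I_d)$ rewriting for the same Young-inequality step), so no further comparison is needed.
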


\begin{proof}
	We follow the proof of \cite[Lemma 15]{MonmarchePTRF18}.
	In the following, the constant $C > 0$ may change from line to line.

	Notice that $\mu^*_{\eps_t}$ has smooth density function $p^*_{\eps_t} (x,y) = \frac{1}{Z_{\eps_t}} \exp \lf( -\frac{1}{\eps_t}  \lf( U(x) + \frac{|y|^2}{2}  \rh) \rh) $ with renormalization constant $Z_{\eps_t} > 0$.
	Use \eqref{quadratic}, it is easy to obtain that $\mu^*_{\eps_t}$ has bounded moments uniformly in $\eps$.
	Thus, there exists some constant $C > 0$ such that
\begin{eqnarray*}
       \lf|\dr_{\eps} \log p^*_{\eps_t} (x,y) \rh|
       & = & \lf|\dr_{\eps} \lf( -\log Z_{\eps_t} - \frac{U(x)}{\eps_t} - \frac{|y|^2}{2\eps_{t}}   \rh) \rh|\\
       & = & \lf|\frac{1}{\eps_t^2} \lf( U(x) + \frac{|y|^2}{2}  \rh ) - \int \frac{1}{\eps_t^2} \lf( U(a) + \frac{|b|^2}{2}  \rh ) \d p^*_{\eps_t} (a,b) \rh|\\
       & \le & C \eps_t^{-2} \lf( 1+ U(x) +|y|^2 \rh).
\end{eqnarray*}
Moreover,
\begin{equation*}
    \gr_z \dr_{\eps} \log p^*_{\eps_t} (x,y) = \eps_t^{-2} \begin{pmatrix}
    \gr_x U(x)\\ y
    \end{pmatrix}.
\end{equation*}
Thus, setting $\tilde{S} = (I_d, I_d)$ and using $L$-Lipschitz of $\gr_x U$, we have
\begin{eqnarray*}
       &  & \dr_{\eps} \;\E_{\mu_t} \lf[ \lf\langle \gr_z\log\frac{p_t}{p^*_{\eps_t}}, S\,\gr_z\log \frac{p_t}{p^*_{\eps_t}}\rh\rangle \rh]\\
       & = & \dr_{\eps} \int \lf|\tilde{S} \gr_z \log\frac{p_t}{p^*_{\eps_t}} \rh|^2  p_t \d z \\
       & = & 2 \int \lf( \tilde{S} \gr_z \log\frac{p_t}{p^*_{\eps_t}} \cdot \tilde{S} \gr_z \dr_{\eps} \log\frac{p_t}{p^*_{\eps_t}}\rh) p_t \d z \\
       & = & -2 \int \lf( \tilde{S} \gr_z \log\frac{p_t}{p^*_{\eps_t}} \cdot \tilde{S} \gr_z \dr_{\eps} \log p^*_{\eps_t} \rh) p_t \d z \\
       & \le & \int \lf|\tilde{S} \gr_z \log\frac{p_t}{p^*_{\eps_t}} \rh|^2 p_t \d z + \int \lf| \tilde{S} \gr_z \dr_{\eps} \log p^*_{\eps_t} \rh|^2 p_t \d z \\
       & \le &  \;\E_{\mu_t} \lf[ \lf\langle \gr_z\log\frac{p_t}{p^*_{\eps_t}}, S\,\gr_z\log \frac{p_t}{p^*_{\eps_t}}\rh\rangle \rh] + 2\eps_t^{-4} \E \lf[ |\gr_x U(X_t)|^2 + |Y_t|^2  \rh]\\
       & \le & \;\E_{\mu_t} \lf[ \lf\langle \gr_z\log\frac{p_t}{p^*_{\eps_t}}, S\,\gr_z\log \frac{p_t}{p^*_{\eps_t}}\rh\rangle \rh] + C\eps_t^{-4} (1 + C_0)  
\end{eqnarray*}
and
\begin{eqnarray*}
       \dr_{\eps} \lf( \gamma(\eps_t) \;\E_{\mu_t} \lf[ \log\frac{p_t}{p^*_{\eps_t}}  \rh] \rh)
       & = & \gamma'(\eps_t) \;\E_{\mu_t} \lf[ \log\frac{p_t}{p^*_{\eps_t}} \rh] - \gamma(\eps_t) \;\E_{\mu_t} \lf[ \dr_{\eps} \log p^*_{\eps_t} \rh]\\
       & \le & C \eps_t^{-2} \;\E_{\mu_t} \lf[ \log\frac{p_t}{p^*_{\eps_t}}  \rh] + C \eps_t^{-3} (1 + C_0) .
\end{eqnarray*}
Then, by \eqref{eq:der2}, we have
\begin{eqnarray*}
       \dr_{\eps,t} H_{\gamma(\eps_t)}(\mu_t|\mu^*_{\eps_t})
       & = & \eps_t'\,\dr_{\eps} \;\E_{\mu_t} \lf[ \lf\langle \gr_z\log\frac{p_t}{p^*_{\eps_t}}, S\,\gr_z\log \frac{p_t}{p^*_{\eps_t}}\rh\rangle + \gamma(\eps_t) \log\frac{p_t}{p^*_{\eps_t}} \rh]\\
       & \le & |\eps_t'|\lf( (1+\eps_t^{-1}) H_{\gamma(\eps_t)}(\mu_t|\mu^*_{\eps_t}) + C (\eps_t^{-3} + \eps_t^{-4}) (1 + C_0)  \rh)\\
       & \le & |\eps_t'| \omega(\eps_t) \lf( H_{\gamma(\eps_t)}(\mu_t|\mu^*_{\eps_t}) + 1 + C_0 \rh) 
\end{eqnarray*}
for $\omega(\eps) := 1+\eps^{-1} + C(\eps^{-3} + \eps^{-4}) $ which is clearly sub-exponential.
\end{proof}


\subsection{Proof of Theorem~\ref{thm1}}

	For any $t\ge 0$,  let $\tilde Z_t = (\tilde{X}_t, \tilde{Y}_t)$ be a random variable (on the same initial probability space) following distribution $\mu^*_{\eps_t}$.
	Then, for any $\delta>0$,
	\begin{eqnarray} \label{bound}
		 \P (U(X_t)>\delta) & = & \P(U(X_t)>\delta, ~U(\tilde{X}_t)>\delta) + \P (U(X_t)>\delta, ~U(\tilde{X}_t)\le \delta)  \nonumber \\
		 & \le & \P(U(\tilde{X}_t)>\delta) + \TV (\mu_t | \mu^*_{\eps_t}) \nonumber \\
		 & \le & \P(U(\tilde{X}_t)>\delta) + \sqrt{2\KL(\mu_t | \mu^*_{\eps_t})}  \nonumber\\
		 & \le & \P(U(\tilde{X}_t)>\delta) + \sqrt{ 2 H_{\gamma(\eps_t)}(\mu_t | \mu^*_{\eps_t})},
	\end{eqnarray}
	where we have used the Csiszár-Kullback-Pinsker inequality, the definition of the distorted relative entropy \eqref{disent},
	and the fact that $\gamma(\eps_t) = \frac{4}{\eps_t} (1+L^2) \ge 1$ for sufficiently large $t \ge 0$.
	We then conclude the proof by Lemma~\ref{Laplace} and Proposition~\ref{conannealing}.
	\qed

	For the estimation of $\P(U(\tilde{X_t})>\delta)$, we have the following classical result (see e.g. \cite[Lemma 3]{MonmarchePTRF18}, or \cite[Lemma 3]{TangZhou21}).

	\begin{lem} \label{Laplace}
		Let $U$ satisfy Assumption~\ref{assumption1}.
		Then, for all constants $ \delta$ and $\alpha>0$, there exists some constant $C>0$ (which depends only on $\delta,\, \alpha,\, U,\, d$) such that
		$$\P(U(\tilde{X_t})>\delta) \le C e^{-\frac{\delta-\alpha}{\eps_t}} .  $$
		In particular, let $\eps_t$ satisfy Assumption~\ref{assumption2}. Then for sufficiently large $t > 0$, we have
		$$\P(U(\tilde{X_t})>\delta) \le C t^{-\frac{\delta-\alpha}{E}} .$$
	\end{lem}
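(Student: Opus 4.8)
The plan is to exploit the explicit Gibbs form of $\mu^*_{\eps_t}$ and run Laplace's method. Since $\mu^*_{\eps_t}(\d x\,\d y) \propto e^{-U(x)/\eps_t}\,e^{-|y|^2/(2\eps_t)}\,\d x\,\d y$ factorises, the law of $\tilde{X}_t$ has density proportional to $e^{-U(x)/\eps_t}$, so that
$$
	\P\big(U(\tilde{X}_t) > \delta\big) \;=\; \frac{\int_{\{U > \delta\}} e^{-U(x)/\eps_t}\,\d x}{\int_{\R^d} e^{-U(x)/\eps_t}\,\d x}.
$$
Everything then reduces to an upper bound on the numerator and a lower bound on the denominator, valid for $\eps_t$ small; the bounded range of $t$ is disposed of at the end using $\P \le 1$, since $\eps_t$ is positive and non-increasing there, so $e^{-(\delta-\alpha)/\eps_t}$ stays bounded away from $0$ on that range and the constant $C$ can be enlarged to absorb it.

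For the numerator I would split off the exponential factor: for $\eps_t \le 1$,
$$
	\int_{\{U > \delta\}} e^{-U(x)/\eps_t}\,\d x \;=\; e^{-\frac{\delta - \alpha/2}{\eps_t}} \int_{\{U > \delta\}} e^{-\frac{U(x)-\delta+\alpha/2}{\eps_t}}\,\d x \;\le\; e^{-\frac{\delta-\alpha/2}{\eps_t}}\, e^{\delta}\!\int_{\R^d} e^{-U(x)}\,\d x,
$$
where on $\{U>\delta\}$ the exponent $U(x)-\delta+\alpha/2$ is positive, so $e^{-(U-\delta+\alpha/2)/\eps_t}\le e^{-(U-\delta+\alpha/2)}$ when $\eps_t\le 1$, and the last integral is finite because of the quadratic lower bound $U(x)\ge\frac{r}{3}|x|^2-K$ in \eqref{quadratic}. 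For the denominator I would fix a global minimizer $x^*$ of $U$ (it exists by that same growth bound); since $x^*$ is a critical point ($\gr U(x^*)=0$) with $U(x^*)=0$ and $U$ is smooth, a second-order Taylor expansion gives $U(x)\le C_2|x-x^*|^2$ on a ball $B(x^*,\eta)$, whence for $\eps_t\le\eta^2$,
$$
	\int_{\R^d} e^{-U(x)/\eps_t}\,\d x \;\ge\; \int_{B(x^*,\eta)} e^{-C_2|x-x^*|^2/\eps_t}\,\d x \;=\; \eps_t^{d/2}\!\int_{B(0,\,\eta/\sqrt{\eps_t})}\! e^{-C_2|u|^2}\,\d u \;\ge\; c_1\,\eps_t^{d/2}
$$
for a constant $c_1>0$ depending only on $U$ and $d$.

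Combining the two estimates gives $\P(U(\tilde{X}_t)>\delta)\le (C_1/c_1)\,\eps_t^{-d/2}\,e^{-(\delta-\alpha/2)/\eps_t}$ for small $\eps_t$; writing $e^{-(\delta-\alpha/2)/\eps_t}=e^{-\alpha/(2\eps_t)}\,e^{-(\delta-\alpha)/\eps_t}$ and using that $\eps_t^{-d/2}e^{-\alpha/(2\eps_t)}\to 0$ as $\eps_t\to 0$, this is bounded by $C\,e^{-(\delta-\alpha)/\eps_t}$, with $C$ depending only on $\delta,\alpha,U,d$. The ``in particular'' assertion is then immediate from Assumption~\ref{assumption2}(ii): for $t>t_0$ one has $\eps_t=E/\log t$, so $e^{-(\delta-\alpha)/\eps_t}=t^{-(\delta-\alpha)/E}$. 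I do not expect any genuine obstacle here; the only steps needing a little care are the Laplace-type lower bound on the partition function $\int e^{-U/\eps_t}\,\d x$ near the minimizer and the bookkeeping needed to absorb the polynomial prefactor $\eps_t^{-d/2}$ into the exponential — which is precisely the role of the slack $\alpha$.
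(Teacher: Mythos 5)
Your argument is correct, and it fills in a proof that the paper itself does not give: in the text this lemma is simply quoted as a classical fact with pointers to \cite[Lemma 3]{MonmarchePTRF18} and \cite[Lemma 3]{TangZhou21}, so there is no in-paper proof to compare against. What you wrote is precisely the standard Laplace-type argument behind those references: factorize $\mu^*_{\eps}$, bound the tail integral $\int_{\{U>\delta\}}e^{-U/\eps}\,\d x$ by peeling off $e^{-(\delta-\alpha/2)/\eps}$ (the remaining integral is finite by the quadratic lower bound \eqref{quadratic}), lower-bound the partition function by $c_1\eps^{d/2}$ via a local quadratic upper bound at a global minimizer (here one can even use $U(x)\le \tfrac{L}{2}|x-x^*|^2$ globally, since $\gr U$ is $L$-Lipschitz, $\gr U(x^*)=0$ and $U(x^*)=0$), and let the slack $\alpha$ absorb the polynomial prefactor $\eps^{-d/2}$. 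One cosmetic remark: your treatment of the ``large $\eps_t$'' regime invokes monotonicity of $t\mapsto\eps_t$, i.e.\ Assumption~\ref{assumption2}, whereas the first assertion of the lemma is stated under Assumption~\ref{assumption1} alone; this is easily avoided, since your estimate is really a statement for each fixed $\eps>0$ — for $\eps$ above your smallness threshold one has $e^{-(\delta-\alpha)/\eps}\ge e^{-(\delta-\alpha)_+/\mathrm{thr}}$, so $\P\le 1$ already gives the bound with a constant depending only on $\delta,\alpha,U,d$ (and the case $\alpha\ge\delta$ is trivial). With that trivial adjustment the proof is complete and matches the cited classical result.
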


	To provide an estimation on $H_{\gamma(\eps_t)}(\mu_t|\mu^*_{\eps_t})$, we first prove that it is uniformly bounded on any finite time horizon $[0,t]$.
	\begin{lem} \label{finiteH}
		Let Assumptions \ref{assumption1} and \ref{assumption2} hold true.
		Then, for all $t \ge 0$, we have $$\sup_{0 \le s \le t} H_{\gamma(\eps_s)}(\mu_s | \mu^*_{\eps_s}) < \infty.$$
	\end{lem}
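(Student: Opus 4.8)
The plan is to prove the bound by a Gr\"onwall argument in the time variable: I would first show that the distorted relative entropy is finite at the initial time, and then that along the kinetic Fokker--Planck flow \eqref{eq:kFP} it obeys a linear differential inequality whose coefficients are integrable on every bounded time interval.

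For the initial time, write $\gr_z \log\tfrac{p_0}{p^*_{\eps_0}} = \tfrac{\gr_z p_0}{p_0} + \tfrac{1}{\eps_0}(\gr_x U, y)^{\T}$ (note that $\mu_0 \ll \mu^*_{\eps_0}$ since $p^*_{\eps_0}>0$ everywhere); then, using $|a+b|^2 \le 2|a|^2+2|b|^2$,
\[
\int \Bigl| \gr_x \log\tfrac{p_0}{p^*_{\eps_0}} + \gr_y \log\tfrac{p_0}{p^*_{\eps_0}} \Bigr|^2 p_0 \,\d z
\;\le\; 2\,I(\mu_0|\mu^*_{\eps_0})
\;\le\; 4\int \tfrac{|\gr p_0|^2}{p_0}\,\d z \;+\; \tfrac{4}{\eps_0^2}\,\E\bigl[\, |\gr_x U(X_0)|^2 + |Y_0|^2 \,\bigr],
\]
which is finite by Assumption~\ref{assumption2}(i) together with the (at most linear) growth of $\gr U$ implied by its Lipschitz property in Assumption~\ref{assumption1}. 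For the entropic part, the log-Sobolev inequality of Proposition~\ref{prop:LSI} gives $\KL(\mu_0|\mu^*_{\eps_0}) \le \max\{\eps_0/2,\,1/(2\rho_{\eps_0})\}\,I(\mu_0|\mu^*_{\eps_0}) < \infty$. Since $\gamma(\eps_0) = 4(1+L^2)/\eps_0 < \infty$, combining the two estimates gives $H_{\gamma(\eps_0)}(\mu_0|\mu^*_{\eps_0}) < \infty$.

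For the propagation, I would use the decomposition \eqref{eq:der}. The ``diffusion'' term is controlled by the exact identity obtained in Section~\ref{s:condissipation},
\[
\dr_{\mu,s} H_{\gamma(\eps_s)}(\mu_s|\mu^*_{\eps_s}) = -8\eps_s\, \E_{\mu_s}\!\bigl[ \langle \gr_z\gr_y \log h_s,\, S\, \gr_z\gr_y \log h_s \rangle_F \bigr] - 4\, \E_{\mu_s}\!\bigl[ \langle \gr_z \log h_s,\, M_s\, \gr_z \log h_s \rangle \bigr];
\]
inspecting the proof of Lemma~\ref{Mpsd}, its characteristic equation does not depend on $s$ (because $\gamma(\eps_s)\eps_s \equiv 4(1+L^2)$), so in fact $M_s \succeq \tfrac12 I_{2d} \succeq 0$ for \emph{every} $s\ge 0$; together with $S \succeq 0$ this yields $\dr_{\mu,s} H_{\gamma(\eps_s)}(\mu_s|\mu^*_{\eps_s}) \le 0$. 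The ``cooling'' term is handled by Lemma~\ref{conepsilon}: $\dr_{\eps,s} H_{\gamma(\eps_s)}(\mu_s|\mu^*_{\eps_s}) \le |\eps'_s|\,\omega(\eps_s)\,\bigl(H_{\gamma(\eps_s)}(\mu_s|\mu^*_{\eps_s}) + 1 + C_0\bigr)$. Now fix $t\ge 0$; on $[0,t]$ the map $s\mapsto\eps_s$ is continuous, positive and non-increasing, so $\bar\omega_t := \sup_{s\in[0,t]}\omega(\eps_s) < \infty$ and $\int_0^t |\eps'_s|\,\d s = \eps_0 - \eps_t < \infty$. Setting $g(s) := H_{\gamma(\eps_s)}(\mu_s|\mu^*_{\eps_s}) + 1 + C_0$, we get $g'(s) \le \bar\omega_t\,|\eps'_s|\,g(s)$ on $[0,t]$, and the integral form of Gr\"onwall's inequality gives $\sup_{s\in[0,t]} g(s) \le g(0)\, e^{\bar\omega_t(\eps_0-\eps_t)} < \infty$, using $g(0)<\infty$ from the previous step and $C_0<\infty$ from Proposition~\ref{prop:conmoment}. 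This is the assertion.

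The main obstacle is to make the differential computation above rigorous: a priori $H_{\gamma(\eps_s)}(\mu_s|\mu^*_{\eps_s})$ is only known to be finite at $s=0$, so one must justify that $s\mapsto H_{\gamma(\eps_s)}(\mu_s|\mu^*_{\eps_s})$ is locally absolutely continuous with derivative given by \eqref{eq:der}, and that the integrations by parts of Section~\ref{s:condissipation} are licit. This rests on the hypoelliptic smoothing of \eqref{eq:kFP} --- for $s>0$ the density $p_s$ is smooth and strictly positive, and, thanks to the uniform moment bounds of Proposition~\ref{prop:conmoment}, $p_s$ and its derivatives decay fast enough --- and is carried out in the standard way by a truncation/approximation argument (cutting off $\log h_s$ at level $n$, and/or restricting to balls $B_R$, then letting $n,R\to\infty$ with monotone and dominated convergence). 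A secondary point requiring care is that $\eps'$ is only assumed locally integrable (not bounded), which is why the Gr\"onwall step is run in integral form.
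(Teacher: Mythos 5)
Your argument is correct, but it propagates a different quantity than the paper does. The paper's proof reduces the problem to the plain Fisher information: it notes $\E_{\mu_t}\bigl[|\gr_x\log\tfrac{\d\mu_t}{\d\mu^*_{\eps_t}}+\gr_y\log\tfrac{\d\mu_t}{\d\mu^*_{\eps_t}}|^2\bigr]\le 2I(\mu_t|\mu^*_{\eps_t})$, invokes the LSI to control the $\KL$ part along the whole interval, and then shows $\tfrac{\d}{\d t}I(\mu_t|\mu^*_{\eps_t})\le C\,(I(\mu_t|\mu^*_{\eps_t})+1)$ on $[0,t]$ via the bound $\tilde M\succeq-(L+1)I_{2d}$ for the matrix appearing in $\dr_{\mu,t}I$, plus a cooling estimate analogous to Lemma~\ref{conepsilon}; finiteness of $I(\mu_0|\mu^*_{\eps_0})$ and Gr\"onwall conclude. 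You instead run Gr\"onwall directly on $H_{\gamma(\eps_s)}(\mu_s|\mu^*_{\eps_s})$: your key observation --- that $\gamma(\eps)\eps\equiv 4(1+L^2)$ makes $M_s$ independent of $s$, so the eigenvalue computation in Lemma~\ref{Mpsd} gives $M_s\succeq\tfrac12 I_{2d}$ for \emph{every} $s$, hence $\dr_{\mu,s}H\le 0$ --- is valid, and combined with Lemma~\ref{conepsilon} it yields the claim, with the LSI needed only at $s=0$ to make the $\KL$ part of $H_{\gamma(\eps_0)}$ finite. What each route buys: yours is more direct and dispenses with the LSI constant (and with bounding $\gamma$ and $\rho_\eps$) along $[0,t]$, but it exploits the specific normalization $\gamma(\eps)\eps=\mathrm{const}$; the paper's detour through $I$ tolerates a merely bounded-below matrix ($\tilde M\succeq -(L+1)I_{2d}$) and so is robust to the choice of $\gamma$, at the cost of the extra LSI conversion at the end. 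Your explicit initial-time estimate ($I(\mu_0|\mu^*_{\eps_0})$ bounded by the initial Fisher information of $p_0$ plus second moments via the Lipschitz growth of $\gr U$) spells out what the paper only asserts. The regularity caveat you flag (justifying absolute continuity of $s\mapsto H$ and the integrations by parts) is real, but the paper's own proof differentiates $I(\mu_t|\mu^*_{\eps_t})$ with exactly the same level of informality, so this is not a gap relative to the paper's standard.
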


\begin{proof}
	Recall $H_{\gamma(\eps_t)}(\mu_t|\mu^*_{\eps_t}) = \;\E_{\mu_t}\lf[ \lf| \gr_x\log\frac{\d \mu_t}{\d \mu^*_{\eps_t}} + \gr_y\log\frac{\d \mu_t}{\d \mu^*_{\eps_t}} \rh|^2 + \gamma(\eps_t)\log\frac{\d \mu_t}{\d\mu^*_{\eps_t} } \rh] $, and notice that
	$$
		\E_{\mu_t}\lf[\lf| \gr_x\log\frac{\d \mu_t}{\d \mu^*_{\eps_t}} + \gr_y\log\frac{\d \mu_t}{\d \mu^*_{\eps_t}} \rh|^2 \rh]
		~\le~
		2I(\mu_t|\mu^*_{\eps_t}).
	$$
	Then, by the log-Sobolev inequality, it suffices to prove  that $I(\mu_t|\mu^*_{\eps_t})$ is uniformly bounded on any finite horizon.

	As in \eqref{eq:der}, we write
\begin{equation}
\label{derivativeI}
    \frac{\d}{\d t} I(\mu_t|\mu^*_{\eps_t}) = \dr_{\mu, t} I(\mu_t|\mu^*_{\eps_t}) + \dr_{\eps,t} I(\mu_t|\mu^*_{\eps_t}),
\end{equation}
where
	$$
		\dr_{\mu, t} I(\mu_t|\mu^*_{\eps_t}) :=\lf. \frac{\d}{\d t} I(\mu_t|\nu) \rh|_{\nu=\mu^*_{\eps_t}}
		~\mbox{and}~\;\;\;
		\dr_{\eps, t} I(\mu_t|\mu^*_{\eps_t}) := \lf. \frac{\d}{\d t} I(\nu|\mu^*_{\eps_t}) \rh|_{\nu=\mu_t}.
	$$
Similar to the computation in Section~\ref{s:condissipation}, for $h_t=\sqrt{p_t/p^*_{\eps_t}}$, we have $$I(\mu_t|\mu^*_{\eps_t}) = 4 \;\E_{\mu_t}[\langle \gr_z \log h_t, \gr_z \log h_t\rangle ]. $$
Then,
\begin{eqnarray*}
        \dr_{\mu, t} I(\mu_t|\mu^*_{\eps_t}) & = & \int \gr_z\lf( \frac{4}{h_t}\gr_z^*\gr_z h_t \rh)\cdot v_t p_t \d z\\
        & = & -8 \eps_t \;\E_{\mu^*_{\eps_t}} \lf[\lf|\gr_z\gr_yh_t - (\gr_z\log h_t) (\gr_y h_t)^{\T} \rh|^2  \rh]\\
        &  & -4\;\E_{\mu^*_{\eps_t}}\lf[ \langle \gr_z h_t, \tilde{M} \gr_z h_t\rangle \rh]\\
        & \le & -4\;\E_{\mu^*_{\eps_t}}\lf[ \langle \gr_z h_t, \tilde{M} \gr_z h_t\rangle \rh],
\end{eqnarray*}
 where $v_t$ satisfies \eqref{vel1} and $\tilde{M}=\begin{pmatrix}
 0 & I_d -\gr_x^2 U\\
 I_d - \gr_x^2 U & 2I_d
\end{pmatrix}$.
Using the fact that $\gr U$ is $L$-Lipschitz, it is easy to show that $\tilde{M} \succeq -(L+1) I_{2d} $ and thus
\begin{equation}
    \label{derivativeI3}
     \dr_{\mu, t} I(\mu_t|\mu^*_{\eps_t}) \le (L+1) I(\mu_t | \mu^*_{\eps_t} ).
\end{equation}
	
	Further, a similar calculation to the one in the proof of Lemma~\ref{conepsilon} yields that, for some constant $C > 0$,
\begin{equation*}
    \dr_{\eps} I (\mu_t |\mu^*_{\eps} ) \le I (\mu_t |\mu^*_{\eps} ) + C\eps^{-4} (1 + C_0).
\end{equation*}
This inequality, together with Proposition~\ref{prop:conmoment}, yields the existence of some sub-exponential $\tilde{\omega}( \cdot)$, such that
\begin{equation}
    \label{derivativeI4}
    \dr_{\eps, t} I(\mu_t|\mu^*_{\eps_t}) \le |\eps_t'|\tilde{\omega}(\eps_t) (1+I(\mu_t|\mu^*_{\eps_t})).
\end{equation}
	Plugging \eqref{derivativeI3} and \eqref{derivativeI4} into \eqref{derivativeI}, we obtain
\begin{equation*}
    \frac{\d}{\d t} I(\mu_t | \mu^*_{\eps_t}) \le (1+L) I(\mu_t | \mu^*_{\eps_t}) + |\eps_t'|\tilde{\omega}(\eps_t) (1+I(\mu_t|\mu^*_{\eps_t})).
\end{equation*}
	Because $\tilde{\omega}(\eps_t)$ and $|\eps_t'|$ are all bounded on any finite time interval, there exists $C>0$, such that
\begin{equation*}
    \frac{\d}{\d t} I(\mu_t | \mu^*_{\eps_t}) \le C (I(\mu_t | \mu^*_{\eps_t}) + 1).
\end{equation*}
	Finally, by Assumption~\ref{assumption2}, we conclude $I(\mu_0 | \mu^*_{\eps_0})$ is finite. The proof then completes.
\end{proof}

\begin{prop}
\label{conannealing}
Let Assumptions~\ref{assumption1} and \ref{assumption2} hold and choose $\gamma(\eps_t)=\frac{4}{\eps_t}\lf( 1+ L^2 \rh) $. Then, for any $\alpha>0$, there exists $C>0$, such that for sufficiently large $t$,
\begin{equation*}
    H_{\gamma(\eps_t)}(\mu_t|\mu^*_{\eps_t}) \le C t^{-\lf( 1-\frac{E_*}{E} -\alpha \rh)}.
\end{equation*}

\end{prop}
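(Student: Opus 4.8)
The plan is to reduce everything to a scalar linear differential inequality for $h(t):=H_{\gamma(\eps_t)}(\mu_t|\mu^*_{\eps_t})\ge 0$ and then solve it by a comparison argument. First, combining the decomposition \eqref{eq:der} with Proposition~\ref{condissipation} and Lemma~\ref{conepsilon}, and recalling $C_0<\infty$ from Proposition~\ref{prop:conmoment}, one obtains for all sufficiently large $t$ that
\[
  h'(t)\ \le\ -\bigl(c(\eps_t)\rho_{\eps_t}-|\eps_t'|\,\omega(\eps_t)\bigr)\,h(t)\ +\ |\eps_t'|\,\omega(\eps_t)\,(1+C_0).
\]
Next I would insert the explicit cooling schedule $\eps_t=E/\log t$ (valid for $t>t_0$): this gives $|\eps_t'|=\eps_t^2/(Et)$, while Proposition~\ref{prop:LSI} gives $c(\eps_t)\rho_{\eps_t}=\frac{E\,\chi(\eps_t)}{4(1+L^2)\log t}\,t^{-E_*/E}$. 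Since $\chi$ and $\omega$ are sub-exponential, $\chi(\eps_t)=t^{o(1)}$ and $\omega(\eps_t)=t^{o(1)}$, and $1/\log t$ decays more slowly than any power; hence for every $\eta>0$ there exist $c_1,c_2>0$ and a time $t_1$ with $c(\eps_t)\rho_{\eps_t}\ge c_1 t^{-E_*/E-\eta}$ and $|\eps_t'|\,\omega(\eps_t)\le c_2 t^{-1+\eta}$ for $t\ge t_1$. Because $E>E_*$, i.e.\ $E_*/E<1$, the former eventually dominates the latter; enlarging $t_1$, we may set $\beta(t):=\tfrac12 c(\eps_t)\rho_{\eps_t}$ and $g(t):=|\eps_t'|\,\omega(\eps_t)(1+C_0)\ge 0$ to get, using $h\ge 0$, the clean inequality $h'(t)\le -\beta(t)\,h(t)+g(t)$ for $t\ge t_1$, with $\beta(t)\ge c_1 t^{-E_*/E-\eta}$ and $g(t)\le c_2(1+C_0)\,t^{-1+\eta}$.

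It then remains to solve this inequality with the advertised rate. Without loss of generality assume $\alpha\in(0,1-E_*/E)$, since a smaller $\alpha$ only strengthens the bound, and fix $\eta\in\bigl(0,\min\{\alpha/2,\,1-E_*/E\}\bigr)$. I take as candidate super-solution $w(t):=t^{-(1-E_*/E-\alpha)}$. A direct differentiation gives $\beta(t)w(t)\ge c_1 t^{-1+\alpha-\eta}$, whereas $|w'(t)|=(1-E_*/E-\alpha)\,t^{-2+E_*/E+\alpha}=o\bigl(t^{-1+\alpha-\eta}\bigr)$ precisely because $\eta<1-E_*/E$; hence $w'(t)+\beta(t)w(t)\ge \tfrac{c_1}{2}t^{-1+\alpha-\eta}\ge g(t)$ for $t$ large, as $\alpha>2\eta$ forces $\tfrac{c_1}{2}t^{\alpha-2\eta}\ge c_2(1+C_0)$ eventually. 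Thus $w$, and therefore (since $g\ge 0$) also $\lambda w$ for any $\lambda\ge 1$, is a super-solution of $y'=-\beta y+g$ on $[t_*,\infty)$ for some $t_*\ge t_1$. By Lemma~\ref{finiteH} we have $h(t_*)<\infty$, so choosing $\lambda:=\max\{1,\,h(t_*)/w(t_*)\}$ ensures $\lambda w(t_*)\ge h(t_*)$; the elementary comparison principle for linear ODEs --- apply Gr\"onwall to $h-\lambda w$, which satisfies $(h-\lambda w)'\le -\beta\,(h-\lambda w)$ --- then yields $h(t)\le \lambda w(t)=\lambda\, t^{-(1-E_*/E-\alpha)}$ for all $t\ge t_*$, which is the assertion with $C:=\lambda$.

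I expect the only genuine difficulty to lie in the exponent bookkeeping of the first step: one must check that the sub-exponential factors $\chi(\eps_t),\omega(\eps_t)$ and the $1/\log t$ terms cost only an arbitrarily small polynomial power $t^{\pm\eta}$, and --- the crucial point --- that the standing hypothesis $E>E_*$ makes the contraction rate $t^{-E_*/E}$ strictly beat the cooling perturbation $t^{-1}$, so that both the $-|\eps_t'|\omega(\eps_t)h$ correction and the additive $(1+C_0)$ forcing become asymptotically negligible. The remainder is a routine linear-ODE comparison; alternatively, in place of the super-solution one may use the variation-of-constants estimate $h(t)\le h(t_1)e^{-\int_{t_1}^t\beta}+\int_{t_1}^t g(s)\,e^{-\int_s^t\beta}\,ds$ and split $\int_{t_1}^t=\int_{t_1}^{t/2}+\int_{t/2}^t$, bounding the near part via the fact that $g/\beta\asymp t^{-(1-E_*/E)+o(1)}$ is eventually decreasing and the far part via $\int_{t/2}^t\beta\gtrsim t^{1-E_*/E-o(1)}$, which grows faster than any power.
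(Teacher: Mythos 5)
Your proposal is correct and follows essentially the same route as the paper: combine Proposition~\ref{condissipation} and Lemma~\ref{conepsilon} into a scalar differential inequality for $H_{\gamma(\eps_t)}(\mu_t|\mu^*_{\eps_t})$, use the explicit schedule $\eps_t=E/\log t$ together with the sub-exponential factors to reduce the coefficients to powers $t^{-E_*/E-\eta}$ and $t^{-1+\eta}$, exploit $E>E_*$ so the contraction dominates, and close with Lemma~\ref{finiteH} plus a Gr\"onwall-type comparison. The only difference is cosmetic: you finish with a super-solution/comparison argument (or variation of constants), while the paper integrates the inequality directly to get the bound $\frac{2c_2}{c_1}t^{-(1-E_*/E-2\alpha)}+H_{\gamma(\eps_{t_0})}(\mu_{t_0}|\mu^*_{\eps_{t_0}})\exp\lf(-\frac{c_1}{\kappa}(t^{\kappa}-t_0^{\kappa})\rh)$; just make sure the smallness constraint on $\eta$ (namely $2\eta<1-\frac{E_*}{E}$, which your later choice $\eta<\alpha/2$ guarantees) is fixed before you invoke the domination step.
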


\begin{proof}

	First, by Lemma \ref{finiteH}, $H_{\gamma(\eps_t)}(\mu_t|\mu^*_{\eps_t})$ is finite for any $t \ge 0$.
	Next, by \eqref{eq:der}, Proposition~\ref{condissipation} and Lemma~\ref{conepsilon}, for sufficiently large $t \ge 0$, we have
	\begin{eqnarray*}
		\frac{\d}{\d t} H_{\gamma(\eps_t)}(\mu_t|\mu^*_{\eps_t})
		~\le~
		- c(\eps_t)\rho_{\eps_t} H_{\gamma(\eps_t)}(\mu_t|\mu^*_{\eps_t}) + |\eps'_t| \omega(\eps_t) \lf( H_{\gamma(\eps_t)}(\mu_t|\mu^*_{\eps_t}) + 1+C_0 \rh).
	\end{eqnarray*}

	Further, by Proposition~\ref{prop:LSI}, we have $\rho_{\eps} =\chi(\eps)\exp \left( - \frac{E_*}{\eps} \right) $ with some sub-exponential function $\chi(\eps)$.
Then, by Assumption~\ref{assumption2} and Proposition~\ref{condissipation}, for sufficiently large $t \ge 0$, we have that for all $\alpha>0$,
\begin{equation*}
    c(\eps_t)\rho_{\eps_t}  = \O \lf(  t^{-\lf( \alpha + \frac{E_*}{E} \rh)} \rh).
\end{equation*}
According to Proposition~\ref{prop:conmoment}, $ \E \left[ U(x) + |Y_t|^2   \right] $ is uniformly bounded, so we obtain
\begin{equation*}
      |\eps'_t| \omega(\eps_t) \lf( H_{\gamma(\eps_t)}(\mu_t|\mu^*_{\eps_t}) + \E \lf[ 1 + U(x) + |Y_t|^2   \rh]  \rh) = \O  \lf(t^{-1} H_{\gamma(\eps_t)}(\mu_t|\mu^*_{\eps_t}) + t^{-1}  \rh).
\end{equation*}
Thus, there exists some positive constants $\tilde{c}_1$ and $\tilde{c}_2$ such that for sufficiently large $t$,
\begin{equation*}
    \frac{\d}{\d t} H_{\gamma(\eps_t)}(\mu_t|\mu^*_{\eps_t})
        \le  -\tilde{c}_1 t^{-\lf( \alpha + \frac{E_*}{E} \rh)} H_{\gamma(\eps_t)}(\mu_t|\mu^*_{\eps_t}) + \tilde{c}_2 t^{-1} H_{\gamma(\eps_t)}(\mu_t|\mu^*_{\eps_t}) + \tilde{c}_2 t^{-1} .
\end{equation*}
Because $E>E_*$, for the terms of $H_{\gamma(\eps_t)}(\mu_t | \mu_{\eps_t}^*)$ in the r.h.s. of the above inequality, 
the first term dominates the second term when $\alpha > 0$ is small. As a result, for some positive constants $c_1$ and $c_2$, we have
\begin{eqnarray*}
    \frac{\d}{\d t} H_{\gamma(\eps_t)}(\mu_t|\mu^*_{\eps_t})
       & \le & -c_1 t^{-\lf( \alpha + \frac{E_*}{E} \rh)} H_{\gamma(\eps_t)}(\mu_t|\mu^*_{\eps_t}) + c_2 t^{-1} \\
       & \le & -c_1 t^{-\lf( \alpha + \frac{E_*}{E} \rh)} H_{\gamma(\eps_t)}(\mu_t|\mu^*_{\eps_t}) + c_2 t^{-1 + \alpha} .
\end{eqnarray*}
Set $\alpha < \frac{1}{2} \lf( 1-\frac{E_*}{E}  \rh)  $, then for sufficiently large $t_0$ and for all $t\ge t_0$, a Gr\"{o}nwall type argument yields that
\begin{equation*}
    H_{\gamma(\eps_t)}(\mu_t|\mu^*_{\eps_t}) \le \frac{2c_2}{c_1} t^{-\lf( 1-\frac{E_*}{E} -2\alpha  \rh)} + H_{\gamma(\eps_{t_0})}(\mu_{t_0}|\mu^*_{\eps_{t_0}}) \exp \lf( -\frac{c_1}{\kappa} (t^{\kappa} - t_0^{\kappa} ) \rh) ,
\end{equation*}
where $\kappa = 1-\frac{E_*}{E} -\alpha >0 $.

	Finally, by Lemma~\ref{finiteH},
	we have $H_{\gamma(\eps_{t_0})}(\mu_{t_0}|\mu^*_{\eps_{t_0}}) \exp \lf( -\frac{c_1}{\kappa} (t^{\kappa} - t_0^{\kappa} ) \rh) \longrightarrow 0$ as $t \longrightarrow \infty$ for any finite $t_0$.
	The proof is then completed.
\end{proof}


\section{Convergence of the discrete-time simulated annealing}
\label{s:disSA}

	Following the sketch of proof in Section~\ref{s:proofsketch}, we provide in this section the convergence analysis of the discrete time kinetic simulated annealing process $(\Xb, \Yb)$ in \eqref{eq:diskl} and prove Theorem \ref{thm2}.

	Without loss of generality, we assume that the Lipschitz constant of $\gr U(x)$ satisfies $L\ge 1$.
	We also stay in the context with $\theta = 1$ for the proof.
	Denote $z = (x,y)$ and  $\Zb :=(\Xb,\Yb) $.
	By the explicit solution of $(\Xb_t,\Yb_t)$ in \eqref{eq:xbtybt}, we can see that the marginal distribution $\mub_t := \L(\Zb_t) $ is denoted by $\pb_t$ has a strictly positive and smooth density function $\pb_t(z)$.
	Also recall that $\mu^*_{\eps}(\d z) = p^*_{\eps}(z) dz$ with $p^*_{\eps}(x,y) \propto \exp\lf( -\frac{1}{\eps}(U(x)+\frac{|y|^2}{2}) \rh)\d z $.
	For the ease of notation, we abbreviate
	$$
		(\Xb_{T_k}, \Yb_{T_k},\Zb_{T_k}, \eps_{T_k}, \mub_{T_k}, \mu^*_{\eps_{T_k}}, \pb_{T_k}, p^*_{\eps_{T_k}} )
		~\;\mbox{to}\;~
		(\Xb_k, \Yb_k, \Zb_k, \eps_k, \mub_k, \mu^*_k, \pb_k, p^*_k ).
	$$

\subsection{Preliminary analysis of discrete kinetic annealing process}
\label{s:disprocess}

	We first provide a uniform boundedness result on the moment of $(\Xb, \Yb)$.
	To this end, we introduce two constants
	$$
		\e^* := \frac{ \min\lf\{ \frac{\tilde{\beta}r}{2L} , 1- \frac{\tilde{\beta}}{r}  \rh\} \min \lf\{1-\frac{\tilde{\beta}^2}{r/3} , \frac{1}{2}   \rh\}  }{ \max\lf\{ \frac{9L^2}{r} +\frac{4}{3} , L+6 \rh\} \max\lf\{ 1+\frac{3}{4r}, \frac{3}{2}  \rh\} },
		~~\mbox{ }~
		\tilde{\beta} :=\frac{1}{ 2 ( 1+\frac{1}{r}) \Big(1+\frac{1}{\sqrt{r/3}} \Big) },
	$$
	where $r > 0$ is the constant given in Assumption~\ref{assumption1}.
	One can check that
	$\tilde{\beta}\le \frac{1}{2}$, $\tilde{\beta} < r$ and $\frac{\tilde{\beta}^2}{r/3}<1$.

	\begin{prop} \label{prop:dismoment}
		Let Assumptions~\ref{assumption1} and \ref{assumption2} hold.
		Assume that $ \D t_k\le\min\lf\{ \frac{\eta^*}{2} , \frac{1}{L} \rh\} $ for sufficiently large $k \ge 0$.
		Then
		\begin{equation} \label{eq:Cb0}
			\Cb_0 ~:=~ \sup_{k \ge 0} \E\lf[U(\Xb_k)+ |\Yb_k|^2\rh]   < \infty.
		\end{equation}
	\end{prop}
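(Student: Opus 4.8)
The idea is to transport the Lyapunov argument of Proposition~\ref{prop:conmoment} to the explicit one-step map \eqref{eq:implement}. I would keep the same Lyapunov function, only adjusting the constant,
$$
	R(x,y) ~:=~ U(x) + \frac{|y|^2}{2} + \tilde{\beta}\, x\cdot y ,
$$
with $\tilde{\beta}$ as defined above. Since $\tilde{\beta}\le\frac12$ and $U$ obeys the quadratic bounds \eqref{quadratic}, $R$ is comparable to $1+U(x)+|y|^2$ from both sides (as in \cite[Lemma~7]{MonmarchePTRF18}), so it suffices to show $\sup_{k\ge 0}\E[R(\Zb_k)]<\infty$.

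The core step is a one-step drift estimate: there exist constants $\lambda>0$, $C>0$ and an index $k_0$ such that
$$
	\E\big[R(\Zb_{k+1})\,\big|\,\Zb_k\big] ~\le~ (1-\lambda\,\D t_k)\,R(\Zb_k) + C\,\D t_k \qquad \text{a.s.,} \quad k\ge k_0 .
$$
To get it, substitute \eqref{eq:implement} into $R$, write $a:=1-e^{-\D t_k}=\D t_k+O(\D t_k^2)$ and $b:=\D t_k-a=O(\D t_k^2)$, use that $(D_x(k),D_y(k))$ is centred conditionally on $\Zb_k$ with covariances $\Sigma_{11}(k)=O(\eps_k\D t_k^3)$, $\Sigma_{12}(k)=O(\eps_k\D t_k^2)$, $\Sigma_{22}(k)=2\eps_k\D t_k+O(\eps_k\D t_k^2)$ from \eqref{eq:sigmak}, expand $U(\Xb_{k+1})$ about $\Xb_k$ through $U(x+h)\le U(x)+\gr U(x)\cdot h+\frac L2|h|^2$ (valid since $\gr U$ is $L$-Lipschitz), and expand $\frac12|\Yb_{k+1}|^2$ and $\tilde{\beta}\,\Xb_{k+1}\cdot\Yb_{k+1}$ exactly. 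A short calculation shows that the coefficient of $\D t_k$ in the resulting expansion is exactly $\CL_{\eps_k}R(\Xb_k,\Yb_k)$, with $\CL_{\eps}$ the generator used in the proof of Proposition~\ref{prop:conmoment}; hence, by that same computation (with $c_1=r/2$, and using that $\eps_k\le\eps_0$ is bounded), the $\D t_k$-term is at most $\D t_k\big(-c_5 R(\Zb_k)+c_6\big)$ for constants $c_5,c_6>0$ independent of $k$.

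What remains is to control the terms of order $\D t_k^2$ and higher. Using the dissipativity $\gr U(x)\cdot x\ge r|x|^2-m$, the linear-growth bound $|\gr U(x)|^2\le 2|\gr U(0)|^2+2L^2|x|^2$, and \eqref{quadratic}, each such term is bounded by $C\D t_k^2\big(1+U(\Xb_k)+|\Yb_k|^2\big)\le C'\D t_k^2\big(1+R(\Zb_k)\big)$; in particular the $|\gr U(\Xb_k)|^2$-contributions, which appear with both signs, and the $\Yb_k\cdot\gr U(\Xb_k)$ cross-terms are all absorbed this way. Since $\D t_k\le\min\{\eta^*/2,1/L\}$ for $k$ large, the extra factor $\D t_k$ in this remainder makes it dominated by $\frac{c_5}{2}\D t_k R(\Zb_k)$ plus a bounded term, which gives the one-step estimate with $\lambda=c_5/2$; the explicit value of $\tilde{\beta}$ and the explicit threshold $\eta^*$ in the statement are precisely what is needed for this absorption to go through. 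Writing $a_k:=\E[R(\Zb_k)]$ and taking expectations, $a_{k+1}\le(1-\lambda\D t_k)a_k+C\D t_k$ with $\lambda\D t_k<1$ for $k\ge k_0$; an elementary induction then gives $a_k\le\max\{a_{k_0},C/\lambda\}$ for $k\ge k_0$, while $a_k<\infty$ for $k\le k_0$ since $(X_0,Y_0)$ has finite moments of every order (Assumption~\ref{assumption2}), the Gaussian increments in \eqref{eq:implement} have all moments, and the coefficients — involving the at most linearly growing $\gr U$ — are finite. Hence $\sup_{k\ge 0}\E[R(\Zb_k)]<\infty$, and the comparison $U(x)+|y|^2\le C(R(x,y)+1)$ yields \eqref{eq:Cb0}.

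The main obstacle is precisely the bookkeeping of the one-step expansion: one must expand the three pieces of $R(\Zb_{k+1})$, recognise the $O(\D t_k)$-part as $\CL_{\eps_k}R$, and then check that none of the $O(\D t_k^2)$ remainder terms — notably those carrying $|\gr U(\Xb_k)|^2$ or $\Xb_k\cdot\Yb_k$ — spoils the contraction, given the specific constants $\tilde{\beta}$ and $\eta^*$. The discrete Grönwall step and the comparison of $R$ with $1+U+|y|^2$ parallel the continuous-time argument and are routine.
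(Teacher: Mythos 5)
Your proposal is correct and follows essentially the same route as the paper's proof: the same Lyapunov function $\tR(x,y)=U(x)+\tfrac{|y|^2}{2}+\tb\, x\cdot y$, the same one-step expansion of the scheme \eqref{eq:implement} (Lipschitz second-order bound for $U$, exact expansion of the quadratic and cross terms, Gaussian covariances from \eqref{eq:sigmak}), absorption of the $O(\D t_k^2)$ remainder using $\D t_k\le\min\{\eta^*/2,1/L\}$, a discrete Gr\"onwall induction, and the two-sided comparison of $\tR$ with $1+U+|y|^2$. The only difference is organizational — you identify the $O(\D t_k)$ coefficient explicitly as $\CL_{\eps_k}\tR$, whereas the paper bounds the collected coefficients directly — which does not change the substance of the argument.
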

	\begin{proof}
		Similar to the proof of Proposition~\ref{prop:conmoment}, we introduce the Lyapunov function $\tR(x,y)=U(x)+\frac{|y|^2}{2} + \tilde{\beta} x\cdot y $ for all $z = (x,y) \in \R^d \times \R^d$.
		Then, to prove \eqref{eq:Cb0},  it is equivalent to prove that $ \lf(\E\lf[\tR(\Xb_k, \Yb_k)\rh] \rh)_{k \ge 0}$ is uniformly bounded.

		Denote $ \theta_k := 1 - e^{-\D t_k} \le \D t_k$, so that the discrete time scheme \eqref{eq:implement} can be rewritten as
		\begin{equation*}
		\begin{cases}
			\Xb_{k+1} = \Xb_k + \theta_k \Yb_k - \lf( \D t_k - \theta_k \rh) \gr_x U(\Xb_k) + D_x(k), \\
			\Yb_{k+1}  = e^{-\D t_k} \Yb_k - \theta_k \gr_x U(\Xb_k) + D_y(k),
		\end{cases}
		\end{equation*}
		where $\ (D_x(k), D_y(k) ) $ is a Gaussian vector in $\R^d\times\R^d$ with mean zero and covariance matrix $\Sigma_k$ given by \eqref{eq:sigmak}.
		Then, by the Lipschitz property of $\gr_x U$, we derive
	\begin{eqnarray*}
		&& \E \lf[U(\Xb_{k+1}) \rh] \\
		& = & \E\lf[ U(\Xb_k) \rh] + \E\lf[\int_0^1 \gr_x U \lf( \Xb_k + t(\Xb_{k+1} - \Xb_k) \rh) \cdot (\Xb_{k+1} - \Xb_k) \d t\rh]\\
		& = & \E \lf[ U(\Xb_k) \rh]
		\E \lf[ \int_0^1 \gr_x U(\Xb_k) \cdot (\Xb_{k+1} - \Xb_k) \d t \rh]  \\
		&& + \E\lf[\int_0^1 \lf( \gr_x U \lf( \Xb_k + t(\Xb_{k+1} - \Xb_k) \rh) - \gr_x U(\Xb_k)  \rh) \cdot (\Xb_{k+1} - \Xb_k) \d t\rh]\\
		& \le & \E \lf[ U(\Xb_k) \rh] + \frac{L}{2} \E\lf[\lf|\Xb_{k+1} - \Xb_k\rh|^2 \rh] + \E\lf[\gr_x U(\Xb_k) \cdot (\Xb_{k+1}-\Xb_k)\rh]\\
		& = & \E \lf[ U(\Xb_k) \rh] + \frac{L}{2}\theta_k^2\E \big[ |\Yb_k|^2 \big] + \Big(\frac{L(\D t_k - \theta_k)^2}{2} - (\D t_k -\theta_k) \Big) \E\big[  \big|\gr_x U(\Xb_k)\big|^2 \big]\\
		&  & + \lf(\theta_k -L\theta_k (\D t_k - \theta_k)\rh) \E \lf[\Yb_k \cdot \gr_x U(\Xb_k)\rh] + \frac{L}{2}\Sigma_{11}(k) d.
	\end{eqnarray*}
	Further, we compute directly that
	\begin{eqnarray*}
		\E \lf[ \lf|\Yb_{k+1}\rh|^2 \rh] & = & \E \lf[ \lf|e^{-\D t_k} \Yb_k - \theta_k \gr_x U(\Xb_k) + D_y(k)\rh|^2\rh]\\
		& = & e^{-2\D t_k} \E\lf[\lf|\Yb_k\rh|^2 \rh] + \theta_k^2 \E\lf[\lf|\gr_x U(\Xb_k)\rh|^2 \rh] \\
		&&- 2\theta_k e^{-\D t_k} \E\lf[\Yb_k\cdot \gr_x U(\Xb_k)\rh] + \Sigma_{22}(k) d
	\end{eqnarray*}
	and
	\begin{eqnarray*}
		 \E \lf[\Xb_{k+1}\cdot \Yb_{k+1}\rh]
		\!\!\!\! &=&\!\!\!\! \E \Big[ ( \Xb_k + \theta_k \Yb_k - \lf( \D t_k - \theta_k \rh) \gr_x U(\Xb_k) \\
		&& \;\;\;\;\; + D_x(k) )\cdot \big( e^{-\D t_k} \Yb_k - \theta_k \gr_x U(\Xb_k) + D_y(k) \big) \Big]\\
		& = &\!\!\!\!  e^{-\D t_k} \E\lf[\Xb_k\cdot \Yb_k\rh] - \theta_k \E \lf[\Xb_k \cdot \gr_x U(\Xb_k)\rh] + \theta_k e^{-\D t_k} \E \lf[ \lf|\Yb_k\rh|^2 \rh] \\
		&  & - \lf( \theta_k^2 + (\D t_k -\theta_k)e^{-\D t_k}  \rh) \E\lf[\Yb_k \cdot \gr_x U(\Xb_k)\rh] \\
		&  & + \theta_k (\D t_k - \theta_k ) \E \lf[\lf|\gr_x U(\Xb_k)\rh|^2 \rh] + \Sigma_{12}(k) d.
	\end{eqnarray*}
	Therefore, by the definition of $\tR$, it follows that
\begin{eqnarray*}
      &  &  \frac{1}{\D t_k} \lf( \E\lf[\tR(\Xb_{k+1},\Yb_{k+1})\rh] - \E\lf[\tR (\Xb_k,\Yb_k ) \rh]\rh)\\
      & \le & \frac{1}{\D t_k} \bigg( \lf(\frac{L}{2}\theta_k^2 +\frac{1}{2} (e^{-2\D t_k}-1) + \tb\theta_k e^{-\D t_k}  \rh)\E\lf[\lf|\Yb_k\rh|^2\rh] + \tb\lf( e^{-\D t_k} -1 \rh) \E\lf[\Xb_k\cdot \Yb_k\rh]\\
      &  & + \lf( \frac{L(\D t_k-\theta_k)^2}{2} - (\D t_k -\theta_k) + \frac{1}{2}\theta_k^2 + \tb\theta_k(\D t_k-\theta_k)  \rh) \E\lf[\lf|\gr_x U(\Xb_k)\rh|^2\rh] \\
      &  & + \lf(\theta_k -L\theta_k(\D t_k-\theta_k) -\theta_k e^{-\D t_k} - \tb \lf( \theta_k^2 + (\D t_k - \theta_k)e^{-\D t_k} \rh) \rh) \E\lf[\Yb_k \cdot \gr_x U(\Xb_k)\rh]\\
      &  & - \tb\theta_k\E\lf[\Xb_k\cdot \gr_x U(\Xb_k)\rh] + \frac{L}{2}\Sigma_{11}(k)d + \frac{1}{2}\Sigma_{22}(k)d + \tb \Sigma_{12}(k) d\bigg) .
\end{eqnarray*}
Using the fact that $\tb\le \frac{1}{2}$, $\theta_k\le \D t_k \le \frac{1}{L} \le 1 $, and the expressions of $\Sigma_{ij}(k)$ in \eqref{eq:sigmak}, we have
$$\frac{L}{2}\theta_k^2 +\frac{1}{2} (e^{-2\D t_k}-1) + \tb\theta_k e^{-\D t_k} \le -\frac{1}{2}\D t_k + \lf( \frac{L}{2}+ 2 -\frac{3}{2} \tb \rh)\D t_k^2 ,$$
$$ \frac{L(\D t_k-\theta_k)^2}{2} - (\D t_k -\theta_k) + \frac{1}{2}\theta_k^2 + \tb\theta_k(\D t_k-\theta_k) \le \frac{1}{6}\D t_k^3 + \frac{L}{8} \D t_k^4  ,$$
$$\theta_k -L\theta_k(\D t_k-\theta_k) -\theta_k e^{-\D t_k} - \tb \lf( \theta_k^2 + (\D t_k - \theta_k)e^{-\D t_k} \rh) \le \lf( 1 - \frac{3}{2}\tb \rh)\D t_k^2 + \lf( 2\tb - \frac{L}{2} \rh)\D t_k^3   ,$$
and
$$ \frac{L}{2}\Sigma_{11}(k)d + \frac{1}{2}\Sigma_{22}(k)d + \tb \Sigma_{12}(k) d \le \frac{5}{2} \eps_k\D t_k d  .$$
	Thus, by $(r,m)$-dissipativity of $U$ and the $L$-Lipschitz of $\gr_x U$, there exists some constant $b$ such that
	\begin{eqnarray*}
        &  &  \frac{1}{\D t_k} \lf( \E\lf[\tR(\Xb_{k+1},\Yb_{k+1})\rh] - \E\lf[\tR(\Xb_k,\Yb_k) \rh]\rh)\\
        & \le & \frac{1}{\D t_k} \bigg\{ \lf( -\frac{1}{2}\D t_k + \lf( \frac{L}{2}+ 2 - \frac{3}{2} \tb \rh)\D t_k^2 \rh) \E\lf[ \lf|\Yb_k\rh|^2\rh] + \lf( \frac{1}{6}\D t_k^3 + \frac{L}{8} \D t_k^4 \rh) \E\lf[\lf|\gr_x U(\Xb_k)\rh|^2\rh] \\
        &  & \lf( \lf( 1 - \frac{3}{2}\tb \rh)\D t_k^2 + \lf( 2\tb - \frac{L}{2} \rh)\D t_k^3 \rh) \frac{\E\lf[\lf|\Yb_k\rh|^2\rh] + \E\lf[\lf|\gr_x U(\Xb_k)\rh|^2\rh] }{2}  \\
        &  & + \tb \lf( -\D t_k + \frac{\D t_k^2}{2} \rh) \E\lf[\Xb_k \cdot \Yb_k\rh] -\tb \lf( \D t_k -\frac{1}{2}\D t_k^2 \rh) \lf(r\E\lf[\lf|\Xb_k\rh|^2\rh]-m\rh) + \frac{5}{2}\eps_k\D t_k d \bigg\}\\
        & \le &  -\tb r\E\lf[\lf|\Xb_k\rh|^2\rh] - \frac{1}{2} \E\lf[\lf|\Yb_k\rh|^2\rh] - \tb\lf( 1-\frac{1}{2}\D t_k \rh) \E\lf[\Xb_k\cdot \Yb_k\rh] \\
        &  &  + \lf( \lf( L^2 + \frac{1}{2}\tb r \rh)\D t_k + L^2 \lf(2\tb +1 \rh)\D t_k^2  \rh) \E\lf[\lf|\Xb_k\rh|^2\rh]\\
        & & + \lf( \frac{L+5}{2} \D t_k + \tb \D t_k^2 \rh) \E \lf[ \lf| \Yb_k \rh|^2\rh] + b  \\
        & \le & -\tb r\E\lf[\lf|\Xb_k\rh|^2\rh] - \frac{1}{2} \E\lf[\lf|\Yb_k\rh|^2\rh] + \tb \lf(c\E\lf[\lf|\Xb_k\rh|^2\rh] + \frac{1}{4c} \E\lf[\lf|\Yb_k\rh|^2\rh]  \rh)\\
        &   & + \D t_k \lf( \frac{L+6}{2}\E\lf[\lf|\Yb_k\rh|^2\rh] + \lf( 3L^2 + \frac{r}{4} \rh)\E\lf[\lf|\Xb_k\rh|^2\rh] \rh) + b
\end{eqnarray*}
for arbitrary $c$. Now choose $c=\frac{r}{2}$.
	Notice that $\tb < r$ by its definition, then
	\begin{eqnarray*}
		&  &\frac{1}{\D t_k} \lf( \E\lf[\tR(\Xb_{k+1},\Yb_{k+1})\rh] - \E\lf[\tR(\Xb_k,\Yb_k) \rh]\rh)\\
		& \le & -\frac{1}{2} \tb r \E\lf[\lf|\Xb_k\rh|^2\rh] - \frac{1}{2}\lf( 1- \frac{\tb}{r} \rh) \E\lf[\lf|\Yb_k\rh|^2\rh]\\
		&  & + \D t_k \lf( \frac{L+6}{2}\E\lf[\lf|\Yb_k\rh|^2\rh] + \lf( 3L^2 + \frac{r}{4} \rh) \E\lf[\lf|\Xb_k\rh|^2\rh] \rh) + b.
	\end{eqnarray*}
	Using \eqref{quadratic}, we derive
	\begin{eqnarray*}
		&  &\frac{1}{\D t_k} \lf( \E\lf[\tR(\Xb_{k+1},\Yb_{k+1})\rh] - \E\lf[\tR(\Xb_k,\Yb_k) \rh]\rh)\\
		& \le & -\frac{\tb r}{2L}\E\lf[ U(\Xb_k) \rh] - \frac{1}{2} \lf( 1 - \frac{\tb}{r} \rh) \E\lf[\lf|\Yb_k\rh|^2\rh]\\
		&  & + \D t_k \lf( \lf(\frac{9L^2}{r} + \frac{3}{4} \rh) \E\lf[ U(\Xb_k)\rh] + \frac{L+6}{2} \E\lf[\lf|\Yb_k\rh|^2\rh] \rh) + b_3 \\
		& \le & -b_1 \lf(\E\lf[ U(\Xb_k) \rh] + \frac{1}{2} \E\lf[ \lf|\Yb_k\rh|^2 \rh]  \rh) + b_2\D t_k \lf(\E\lf[ U(\Xb_k)\rh] + \frac{1}{2} \E \lf[ |\Yb_k|^2 \rh]  \rh) + b_3,
	\end{eqnarray*}
	where $b_1 :=\min\lf\{ \frac{\tb r}{2L} , 1-\frac{\tb}{r}  \rh\}  $, $b_2 :=\max\lf\{ \frac{9L^2}{r}+\frac{4}{3}, L+6  \rh\}  $, and $b_3 := b + \frac{\tb r K}{2L} + \lf( \frac{9L^2}{r}+\frac{4}{3}\rh) K$.
	Set $b_4 :=\max\lf\{1+ \frac{3}{4r}, \frac{3}{2}  \rh\}  $ and $b_5 :=\min\lf\{ \frac{1}{2} , 1-\frac{\tb^3}{r/3}  \rh\}>0  $. Then, we have
	\begin{eqnarray*}
		b_5 \lf( U(x) + \frac{1}{2}|y|^2 \rh) - \frac{3\tb^2 K}{r}
		\le
		\tR(x,y)
		\le
		b_4 \lf( U(x) + \frac{1}{2}|y|^2 \rh) + \frac{3K}{4r}.
	\end{eqnarray*}
	We thus obtain
	\begin{eqnarray*}
		&  &\frac{1}{\D t_k} \lf( \E\lf[\tR(\Xb_{k+1},\Yb_{k+1})\rh] - \E\lf[\tR(\Xb_k,\Yb_k) \rh]\rh)\\
		& \le & -\frac{b_1}{b_4} \lf( \E\lf[\tR(\Xb_k,\Yb_k) \rh] - \frac{3K}{4r} \rh) + \D t_k \frac{b_2}{b_5} \lf( \E\lf[\tR(\Xb_k, \Yb_k) \rh] + \frac{3\tb^2 K}{r} \rh) + b_3.
	\end{eqnarray*}
	Rearrange the above, we derive, for some constant $C > 0$, that
	\begin{equation*}
		\E\lf[\tR(\Xb_{k+1}, \Yb_{k+1}) \rh] \le \lf(1 - \frac{b_1}{b_4}\D t_k + \frac{b_2}{b_5}\D t_k^2 \rh)\E\lf[\tR(\Xb_k,\Yb_k)\rh] + C\D t_k.
	\end{equation*}
	When  $\D t_k\le \frac{\e^*}{2} $, we have $ 1 - \frac{b_1}{b_4}\D t_k + \frac{b_2}{b_5}\D t_k^2 \le 1 - \frac{b_1}{2 b_4} \D t_k  $,
	which implies that $\big( \E\lf[\tR(\Xb_k,\Yb_k)\rh] \big)_{k \ge 0}$ is uniformly bounded.
	\end{proof}

	\begin{rmk}
		Under the conditions in Theorem~\ref{thm2}, we have
		\begin{equation*}
			\lim_{k \to \infty} T_k = \infty
			\;\;\mbox{and}\;\;
			\lim_{k \to \infty}  \D t_k = 0,
		\end{equation*}
		so the conditions of Proposition~\ref{prop:dismoment} hold.
	\end{rmk}

	By Proposition~\ref{prop:dismoment}, we can easily obtain that $\big( \E\big[ |\Yb_t |^2 \big] \big)_{t \ge 0}$ is also uniformly bounded.
	Moreover, we have the following fine estimation on the increment of $\Xb$:
	
	\begin{prop} \label{prop:disboundx}
		There exists a constant $C > 0$, such that
		$$
			\sup_{t \in [T_k, T_{k+1}]}
			\E\lf[\lf|\Xb_t-\Xb_k\rh|^2\rh]\le C \D t_k^2,
			~~\mbox{for all}~
			k \ge 0.
		$$
	\end{prop}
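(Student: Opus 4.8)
The plan is to start from the explicit representation \eqref{eq:xbtybt} of $\Xb_t$ on $[T_k,T_{k+1}]$. Writing $s := t-T_k \in [0,\D t_k]$, we have
$$
	\Xb_t - \Xb_k ~=~ \big(1-e^{-s}\big)\Yb_k ~-~ \big(s-(1-e^{-s})\big)\gr_x U(\Xb_k) ~+~ D_x(t),
$$
so by $|a+b+c|^2 \le 3(|a|^2+|b|^2+|c|^2)$ it suffices to bound the three contributions separately. For the first two I would use the elementary inequalities $0 \le 1-e^{-s} \le s \le \D t_k$ and $0 \le s-(1-e^{-s}) \le s^2/2 \le \D t_k^2/2$ (the latter follows since $f(s):=s^2/2 - s + 1 - e^{-s}$ satisfies $f(0)=f'(0)=0$ and $f''\ge 0$), which gives
$$
	\E\big[|\Xb_t-\Xb_k|^2\big] ~\le~ 3\,\D t_k^2\,\E[|\Yb_k|^2] ~+~ \tfrac{3}{4}\,\D t_k^4\,\E\big[|\gr_x U(\Xb_k)|^2\big] ~+~ 3\,\E\big[|D_x(t)|^2\big].
$$

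Next I would invoke Proposition~\ref{prop:dismoment}: since $\Cb_0 = \sup_k \E[U(\Xb_k)+|\Yb_k|^2] < \infty$, the quantity $\E[|\Yb_k|^2]$ is uniformly bounded; moreover, the quadratic lower bound $U(x) \ge \frac{r}{3}|x|^2 - K$ from \eqref{quadratic} gives $\E[|\Xb_k|^2] \le \frac{3}{r}(\Cb_0 + K)$, and the $L$-Lipschitz property of $\gr U$ (whence $|\gr_x U(x)|^2 \le 2|\gr U(0)|^2 + 2L^2|x|^2$) then shows that $\E[|\gr_x U(\Xb_k)|^2]$ is also uniformly bounded in $k$. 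Since the hypotheses of Theorem~\ref{thm2} force $\D t_k \to 0$, the sequence $(\D t_k)_{k\ge 0}$ is bounded, so $\D t_k^4 \le (\sup_j \D t_j)^2 \D t_k^2 \le C\D t_k^2$, and the first two terms are $\O(\D t_k^2)$ uniformly. For the stochastic term, recall $D_x(t) = \int_{T_k}^t D_y(r)\,\d r$ with $D_y(r) = \sqrt{2\eps_k}\int_{T_k}^r e^{-(r-u)}\,\d B_u$; by Jensen's inequality $|D_x(t)|^2 \le (t-T_k)\int_{T_k}^t |D_y(r)|^2\,\d r$, and by the It\^o isometry $\E[|D_y(r)|^2] = \eps_k d\,(1-e^{-2(r-T_k)}) \le 2\eps_k d\,\D t_k$, so that $\E[|D_x(t)|^2] \le 2\eps_k d\,\D t_k^3$. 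As $t\mapsto \eps_t$ is non-increasing, $\eps_k \le \eps_0$, and $\D t_k^3 \le C\D t_k^2$ again, this contribution is $\O(\D t_k^2)$ uniformly as well.

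Collecting the three bounds and taking the supremum over $t \in [T_k,T_{k+1}]$ yields the claim with a constant $C$ independent of $k$. The only real subtlety is to ensure that every constant is genuinely uniform in $k$: this rests on the uniform moment bound of Proposition~\ref{prop:dismoment}, on the boundedness of $(\D t_k)$ and of $(\eps_k)$, and on the fact that the higher powers $\D t_k^3$ and $\D t_k^4$ are dominated by $\D t_k^2$ precisely because $\D t_k$ is bounded. (Alternatively, one may read off $\E[|D_x(T_{k+1})|^2] = \Sigma_{11}(k)\,d$ from \eqref{eq:sigmak} and expand $\Sigma_{11}(k) = \tfrac{2}{3}\eps_k\D t_k^3 + \O(\eps_k\D t_k^4)$, but the Jensen/It\^o estimate above covers all $t \in [T_k,T_{k+1}]$ at once.) There is no substantial obstacle here; the argument is a routine computation made effective by the a priori estimates already established.
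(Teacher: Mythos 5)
Your argument is correct, but it is not the route the paper takes, and it is worth seeing how much shorter the paper's proof is. The paper simply writes $\Xb_t-\Xb_k=\int_{T_k}^t \Yb_s\,\d s$, applies Cauchy--Schwarz to get $\E[|\Xb_t-\Xb_k|^2]\le (t-T_k)\int_{T_k}^t\E[|\Yb_s|^2]\,\d s$, and invokes the fact (noted just before the proposition) that $\sup_{t\ge 0}\E[|\Yb_t|^2]<\infty$, which follows "easily" from Proposition~\ref{prop:dismoment}; that is the whole proof. You instead expand the explicit solution \eqref{eq:xbtybt}, bound the drift coefficients by $1-e^{-s}\le s$ and $s-(1-e^{-s})\le s^2/2$, control $\E[|\Yb_k|^2]$ and $\E[|\gr_x U(\Xb_k)|^2]$ via $\Cb_0$, \eqref{quadratic} and the Lipschitz bound, and estimate $D_x(t)$ by Jensen plus It\^o isometry. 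What your version buys is that it only uses the moment bound at the grid points $T_k$ (the content of Proposition~\ref{prop:dismoment} as stated) together with the explicit Gaussian term, so it sidesteps the intermediate claim that $\E[|\Yb_t|^2]$ is uniformly bounded for \emph{all} continuous $t$; what it costs is length and the (harmless) extra hypotheses that $(\D t_k)$ is bounded so that $\D t_k^3,\D t_k^4=\O(\D t_k^2)$, which indeed holds under \eqref{eq:Cond_Tk}. All your individual estimates check out (in particular $\E[|D_y(r)|^2]=\eps_k d(1-e^{-2(r-T_k)})$ and $\E[|D_x(t)|^2]\le 2\eps_k d\,\D t_k^3$), so this is a valid, if more laborious, alternative proof.
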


	\begin{proof}
	For $t \in [T_k, T_{k+1}]$, notice that $\Xb_t=\Xb_k + \int_{T_k}^t \Yb_s \d s $.
	It then follows by Cauchy-Schwarz inequality that
\begin{eqnarray*}
        \E\lf[\lf|\Xb_t-\Xb_k\rh|^2\rh]
        = \E \lf[\lf| \int_{T_k}^t \Yb_s \d s \rh|^2 \rh]
        \le (t-T_k) \int_{T_k}^t \E\lf[\lf|\Yb_s\rh|^2\rh] \d s
        \le C\D t_k^2
\end{eqnarray*}
	for some constant $C$ independent of $k \ge 0$.
\end{proof}


\subsection{One-step entropy evolution with fixed temperature}
\label{s:disdissipation}
	In this subsection, we aim at estimating the difference term \eqref{eq:derdis1},
	which is the one-step evolution of $H_{\gamma(\eps)}\lf( \mub_t | \mu^*_{\eps} \rh)$ with fixed $\eps$.

	We first provide the Fokker-Planck equation satisfied by $\mub_t$ on each interval $[T_k, T_{k+1}]$.
	Recall that $\pb_t$ denotes the density function of $\mub_t$.
	For $t \in [T_k, T_{k+1}]$ and $z_k \in \R^d \times \R^d$, denote by
	$$
		z \longmapsto \pb_{t|T_k}\lf(z| z_k\rh)
		~\mbox{ the density function of }~
		\L (\Zb_t | \Zb_{T_k} = z_k),
	$$
	that is, the conditional distribution of $\Zb_t$ knowing $\Zb_{T_k} = z_k$.
	Further, similar to \eqref{vel1}, define
	$$
		v_{t,k} (z) := \begin{pmatrix}
		y \\
		-\gr_x U(x) - y -\eps_k\gr_y \log \pb_t
		\end{pmatrix}
		~\mbox{and}~
		\hat{v}_{t,k}(z)
		:= \begin{pmatrix}
	        y \\
		-\gr_x U(x_k) - y -\eps_k\gr_y \log \pb_t
		\end{pmatrix}.
	$$
	
	\begin{lem} \label{lem:disFP}
		For all $k \ge 0$, $\pb_t$ satisfies
	$$
		 \dr_t \pb_t = - \gr_z \cdot \lf( \pb_t v_{t,k}  \rh) - \int \lf[ \gr_z \cdot \lf( \pb_{t|T_k}\lf(\cdot|z_k\rh) \lf(\hat{v}_{t,k} - v_{t,k}  \rh) \rh)  \rh] \d \mub_k(z_k),
		~
		 t\in[T_k ,T_{k+1}].
	$$
\end{lem}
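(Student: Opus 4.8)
The plan is to condition on $\Zb_{T_k}$, write down the (exact, Gaussian) forward equation for the resulting conditional density, and then average over $\Zb_{T_k}\sim\mub_k$. Fix $k\ge 0$. On $[T_k,T_{k+1})$ the coefficients of \eqref{eq:diskl} are frozen at $T_k$, so conditionally on $\Zb_{T_k}=z_k=(x_k,y_k)$ the process $(\Xb_s,\Yb_s)_{s\in[T_k,t]}$ is the linear diffusion with generator
$$
	\CL^{z_k} f(x,y) := y\cdot\gr_x f - \lf(\gr_x U(x_k)+y\rh)\cdot\gr_y f + \eps_k \D_y f .
$$
By the explicit solution \eqref{eq:xbtybt}, for $t\in(T_k,T_{k+1}]$ the conditional law $\L(\Zb_t\,|\,\Zb_{T_k}=z_k)$ is a non-degenerate Gaussian whose mean depends smoothly on $z_k$, so $\pb_{t|T_k}(z|z_k)$ is jointly smooth in $(z,z_k)$, strictly positive, and solves the forward Kolmogorov equation $\dr_t \pb_{t|T_k}(\cdot|z_k)=(\CL^{z_k})^{*}\pb_{t|T_k}(\cdot|z_k)$, i.e.
$$
	\dr_t \pb_{t|T_k} = -\gr_x\cdot(y\,\pb_{t|T_k}) + \gr_y\cdot\big((\gr_x U(x_k)+y)\pb_{t|T_k}\big) + \eps_k\D_y \pb_{t|T_k}.
$$
Rewriting $\eps_k\D_y\pb_{t|T_k}=\gr_y\cdot\big(\eps_k(\gr_y\log\pb_{t|T_k})\pb_{t|T_k}\big)$ puts this in divergence form $\dr_t\pb_{t|T_k}(\cdot|z_k)=-\gr_z\cdot\big(\pb_{t|T_k}(\cdot|z_k)\,w_{t,k}^{z_k}\big)$, with $w_{t,k}^{z_k}(z):=\big(y,\ -\gr_x U(x_k)-y-\eps_k\gr_y\log\pb_{t|T_k}(z|z_k)\big)^{\T}$.

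Next I would integrate this identity against $\mub_k(\d z_k)$, using $\pb_t(z)=\int\pb_{t|T_k}(z|z_k)\,\d\mub_k(z_k)$ and the fact that $z_k$ is merely a parameter, to obtain $\dr_t\pb_t(z)=-\gr_z\cdot\int\pb_{t|T_k}(z|z_k)\,w_{t,k}^{z_k}(z)\,\d\mub_k(z_k)$. The one subtle point is the cancellation of the conditional score: since $\pb_{t|T_k}\gr_y\log\pb_{t|T_k}=\gr_y\pb_{t|T_k}$, one has, pointwise in $(z,z_k)$,
$$
	\pb_{t|T_k}(z|z_k)\,w_{t,k}^{z_k}(z)
	= \pb_{t|T_k}(z|z_k)\,\hat v_{t,k}(z)
	- \eps_k\begin{pmatrix}0\\ \gr_y\pb_{t|T_k}(z|z_k)\end{pmatrix}
	+ \eps_k\,\pb_{t|T_k}(z|z_k)\begin{pmatrix}0\\ \gr_y\log\pb_t(z)\end{pmatrix},
$$
and upon integrating in $z_k$ the last two terms cancel, because $\int\gr_y\pb_{t|T_k}(z|z_k)\,\d\mub_k(z_k)=\gr_y\pb_t(z)=\pb_t(z)\gr_y\log\pb_t(z)$. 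Hence $\int\pb_{t|T_k}(z|z_k)\,w_{t,k}^{z_k}(z)\,\d\mub_k(z_k)=\int\pb_{t|T_k}(z|z_k)\,\hat v_{t,k}(z)\,\d\mub_k(z_k)$. Finally, writing $\hat v_{t,k}=v_{t,k}+(\hat v_{t,k}-v_{t,k})$ and noting that $v_{t,k}(z)$ does not depend on $z_k$, so $\int\pb_{t|T_k}(z|z_k)\,v_{t,k}(z)\,\d\mub_k(z_k)=\pb_t(z)v_{t,k}(z)$, we get
$$
	\dr_t\pb_t = -\gr_z\cdot(\pb_t v_{t,k}) - \gr_z\cdot\int\pb_{t|T_k}(\cdot|z_k)\lf(\hat v_{t,k}-v_{t,k}\rh)\d\mub_k(z_k),
$$
which is the asserted formula once the outer divergence is moved inside the $z_k$-integral (legitimate since $\hat v_{t,k}-v_{t,k}$ involves $z_k$ only through the constant vector $\gr_x U(x_k)$).

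The remaining work is routine bookkeeping: justifying the interchanges of $\dr_t$, $\gr_z$ and $\int\d\mub_k$, and the use of Fubini. These follow from the joint smoothness and explicit Gaussian form of $\pb_{t|T_k}(z|z_k)$ supplied by \eqref{eq:xbtybt} (the covariance is independent of $z_k$ and non-degenerate for $t>T_k$), the polynomial growth of $\gr U$ from Assumption~\ref{assumption1}, and the uniform moment bounds of Proposition~\ref{prop:dismoment}. I expect this tail/integrability check to be the only mildly delicate point; everything else is the short computation above, and the essential idea is simply that the frozen conditional Fokker--Planck equation, once averaged, reproduces the Fokker--Planck equation of the non-frozen drift (encoded by $v_{t,k}$) plus a correction term for the freezing of $\gr_x U$.
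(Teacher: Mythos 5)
Your proof is correct and follows essentially the same route as the paper: condition on $\Zb_{T_k}$, use the Fokker--Planck equation of the frozen linear SDE on $[T_k,T_{k+1}]$, and integrate against $\mub_k$. In fact you are slightly more careful than the paper's one-line argument, since the paper writes the conditional equation directly in terms of $\hat v_{t,k}$ (which contains the marginal score $\gr_y\log\pb_t$), a step that is only valid after averaging in $z_k$ — precisely the cancellation $\int \gr_y \pb_{t|T_k}(z|z_k)\,\d\mub_k(z_k)=\gr_y\pb_t(z)=\pb_t(z)\gr_y\log\pb_t(z)$ that you make explicit.
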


\begin{proof}
	Conditioned on the $\sigma$-field $\overline{\F}_{T_k} :=\sigma (\Zb_t : 0 \le t \le T_k ) $,
	 $\Zb$ follows a linear SDE with deterministic parameters on $[T_k, T_{k+1}]$ (see \eqref{eq:diskl}).
	Thus, the conditional density function $z \longrightarrow p(z\;|\;z_k )$ satisfies the Fokker-Planck equation
	\begin{eqnarray*}
		\dr_t \pb_{t|T_k}(z|z_k) & = &  -\gr_z \cdot \lf(\pb_{t|T_k}(z|z_k) \hat{v}_{t,k}(z) \rh)\\
		& = & -\gr_z \cdot \lf(\pb_{t|T_k}(z|z_k) v_{t,k}(z) \rh) - \gr_z \cdot \lf(\pb_{t|T_k}(z|z_k) (\hat{v}_{t,k}(z) - v_{t,k}(z) ) \rh).
	\end{eqnarray*}
	Notice that $v_{t,k}$ is independent of $z_k$. We can then complete the proof by integrating both sides of the above equality with respect to $z_k$ under the measure $\mub_k(z_k)$.
\end{proof}


For $t\in[T_k, T_{k+1}] $, we write the distorted relative entropy
\begin{eqnarray*}
    H_{\gamma(\eps_k)}(\mub_t|\mu^*_k) & = & \;\E_{\mub_t} \lf[ \lf\langle \gr_z\log\frac{\pb_t}{p^*_k}, S\,\gr_z\log \frac{\pb_t}{p^*_k} \rh\rangle + \gamma(\eps_k)\log \frac{p_t}{p^*_k} \rh] \no \\
     & = & 4\;\;\E_{\mub_t}\lf[\lf\langle \gr_z\log \th_t, S\,\gr_z\log \th_t \rh\rangle    \rh] + 2\gamma(\eps_k)\;\;\E_{\mub_t}\lf[\log \th_t   \rh],
\end{eqnarray*}
where $\tilde{h}_t=\sqrt{\frac{\pb_t}{p^*_k}}$ and $S=\lf(\begin{array}{cc}
    I_d & I_d  \\
    I_d & I_d
\end{array}\rh)$.
	Then, for $t\in[T_k ,T_{k+1}]$, by Lemma~\ref{lem:disFP}, we have
	\begin{eqnarray}
		&& \frac{\d}{\d t} H_{\gamma(\eps_k)}(\mub_t|\mu^*_k)
		~=~
		\int \frac{\delta H_{\gamma(\eps_k)}(\mub_t|\mu^*_k)}{\delta\; \mub_t} \;\; \dr_t \pb_t \; \d z \no \\
		\;\; &=&\!\!\!\! \label{a1} \int \lf<\gr_z \lf( \frac{\delta H_{\gamma(\eps_k)}(\mub_t|\mu^*_k)}{\delta\; \mub_t} \rh),\; v_{t,k}  \rh>\; \pb_t \;\d z \\
		& + &   \label{b1}  \int \lf<\gr_z \lf( \frac{\delta H_{\gamma(\eps_k)}(\mub_t|\mu^*_k)}{\delta\; \mub_t} \rh),\; \int \pb_{t|T_k}(z|z_k) (\hat{v}_{t,k}- v_{t,k} )(z)\; \pb_k(z_k) \d z_k  \rh>\; \d z.
	\end{eqnarray}
	In the above, the first order variational derivative of $ H_{\gamma(\eps_k)}(\mub_t|\mu^*_k)$ is given by
	$$\frac{\delta H_{\gamma(\eps_k)}(\mub_t|\mu^*_k)}{\delta\; \mub_t} = \frac{4}{\th_t}\tgr_z^* S\gr_z\th_t + 2\gamma(\eps_k) \log\th_t + \gamma(\eps_k) ,$$
	where we use $(\tilde{\gr}_z)^*=(\tilde{\gr}_x^*, \tilde{\gr}_y^*)\; := \; \lf(-\gr_x\cdot + \frac{1}{\eps_k}\gr_x U\cdot\, , \, -\gr_y\cdot + \frac{y}{\eps_k} \cdot\rh)$ to denote the adjoint operator of $\gr_z$ with respect to $\mu^*_k$, in the sense that for any $f,g \in L^2(\mu^*_k)$, $\;\E_{\mu^*_k}[f\cdot \gr_z g] = \;\E_{\mu^*_k}[g\tilde{\gr}_z^* f] $.

    We then follow the same computation steps as in \eqref{ex1}, \eqref{ex2} and \eqref{ex3} to obtain that the term \eqref{a1} is equal to
	\begin{equation}\label{a2}
	    -4 \;\E_{\mub_t} \lf[\lf\langle \gr_z \log \th_t, M_k \gr_z \log \th_t \rh\rangle  \rh] -8 \eps_k  \;\E_{\mub_t} \lf[ \lf\langle
		\gr_z\gr_y \log \th_t , S \gr_z\gr_y \log \th_t
		\rh\rangle_F    \rh],
	\end{equation}
	where $M_k=\begin{pmatrix}
		2I_d & 2I_d -\gr_x^2 U\\
		2I_d - \gr_x^2 U & \lf( 2+ \gamma(\eps_k)\eps_k  \rh)I_d - 2\gr_x^2 U
		\end{pmatrix} \in\M_{2d\times 2d}(\R) $.

	Because $\hat{v}_{t,k}(z) -v_{t,k}(z) = \left( \begin{array}{c}
        0 \\
        \gr_x U(x) -\gr_x U(x_k)
        \end{array}\right),$ term \eqref{b1} can be directly computed as
\begin{eqnarray}
        &  &  \int \Big\langle \gr_z \lf( \frac{4}{\th_t} \tilde{\gr}_z^* S\gr_z \th_t \rh) + 2\gamma(\eps_k)\gr_z \log\th_t , \no \\
        	&&  \;\;\;\;\;\;\;\;\;\;\;\; \int  (\hat{v}_{t,k}- v_{t,k} )(z)\; \pb_{t|T_k}(z|z_k) \; \pb_k(z_k)\d z_k \Big\rangle \d z  \\
        & = & \int \lf\langle 2\gamma(\eps_k) \gr_y\log\th_t, \; \;\int  \lf(\gr_x U(x) -\gr_x U(x_k) \rh)\; \pb_{t|T_k}(z|z_k) \; \pb_k(z_k) \d z_k \rh\rangle\d z \no\\
        &  & +  \int \lf\langle \gr_y\lf( \frac{4}{\th_t} \tilde{\gr}_z^* S\gr_z \th_t \rh) , \; \int (\gr_x U(x) -\gr_x U(x_k) ) \; \pb_{t|T_k}(z|z_k)\;  \pb_k(z_k)\d z_k\rh\rangle \d z \no.
\end{eqnarray}

Denote
\begin{equation}
    \label{eq:A}
A_t(z) := \frac{1}{\pb_t(z)} \;\int \lf(\gr_x U(x) -\gr_x U(x_k) \rh) \; \pb_{t|T_k}(z|z_k) \; \pb_k(z_k)\d z_k.
\end{equation}
Because $\tgr_z^* S\gr_z\th_t = \tgr_x^*\gr_x \th_t + \tgr_x^*\gr_y \th_t + \tgr_y^*\gr_x \th_t + \tgr_y^*\gr_y \th_t $, in the same spirit of  \cite[Lemma 3]{MaBEJ21}, we have the following results:

\begin{lem}
     \label{lemb2}
For $\# \in\{ x,y \} $,
\begin{eqnarray*}
       & & 4\int \lf\langle \gr_y \lf(\frac{\tilde{\gr}_x^*\gr_\#\th_t}{\th_t}  \rh), \; \int \lf(\gr_x U(x) -\gr_x U(x_k) \rh) \; \pb_{t|T_k}(z|z_k) \; \pb_k(z_k)\;\d z_k  \rh\rangle \d z\\
       & = & 4\int\lf\langle \gr_y\gr_{\#}\log\th_t, \gr_x A_t(z)  \rh\rangle_F \d \mub_t(z) + 4 \int \lf\langle \gr_y\gr_{\#}\th_t, A_t(z)(\gr_x\th_t)^{\T}  \rh\rangle_F \d \mu^*_k(z)\\
       & & - 4\int \lf\langle \gr_x\gr_y\th_t, A_t(z)(\gr_{\#}\th_t)^{\T}  \rh\rangle_F \d \mu^*_k(z),
\end{eqnarray*}
and
\begin{eqnarray*}
       & & 4\int \lf\langle \gr_y \lf(\frac{\tilde{\gr}_y^*\gr_\#\th_t}{\th_t}  \rh), \; \int \lf(\gr_x U(x) -\gr_x U(x_k) \rh)\; \pb_{t|T_k}(z|z_k) \; \pb_k(z_k)\d z_k  \rh\rangle \d z\\
       & = & 4\int\lf\langle \gr_y\gr_{\#}\log\th_t, \gr_y A_t(z) \rh\rangle_F \d \mub_t(z) + 4 \int \lf\langle \gr_y\gr_{\#}\th_t, A_t(z)(\gr_y\th_t)^{\T}  \rh\rangle_F \d \mu^*_k(z)\\
       & &  + \frac{4}{\eps_k} \int \lf\langle \gr_{\#} \log\th_t, \; \int  \lf(\gr_x U(x) -\gr_x U(x_k) \rh) \; \pb_{t|T_k}(z|z_k) \; \pb_k(z_k)\d z_k \rh\rangle \d z\\
       &  & - 4\int \lf\langle \gr_y^2\th_t, A_t(z)(\gr_{\#}\th_t)^{\T}  \rh\rangle_F \d \mu^*_k(z).
\end{eqnarray*}
Consequently, \eqref{b1} is equal to
\begin{eqnarray*}
        & & 4\int \lf\langle \gr_z\gr_y\log \th_t, S \gr_z A_t(z)  \rh\rangle_F \d \mub_t (z) + \int\bigg\langle \frac{4}{\eps_k}\gr_x\log \th_t + \lf(2\gamma(\eps_k) +\frac{4}{\eps_k}  \rh)\gr_y\log\th_t,\\
        &  &  \int \lf(\gr_x U(x) -\gr_x U(x_k) \rh)\; \pb_{t|T_k}(z|z_k) \; \pb_k(z_k)\d z_k \bigg\rangle\d z.
\end{eqnarray*}
\end{lem}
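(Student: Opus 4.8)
The plan is to establish the two displayed identities by integration by parts against $\mu^*_k$, and then to read off the final formula for \eqref{b1} by a purely algebraic recombination. Throughout, the starting observation is that, by the definition \eqref{eq:A} of $A_t$, the inner integral $\int\lf(\gr_x U(x)-\gr_x U(x_k)\rh)\pb_{t|T_k}(z|z_k)\pb_k(z_k)\,\d z_k$ is nothing but $A_t(z)\pb_t(z)$; since $\pb_t>0$ (by the preliminary analysis of Section~\ref{s:disprocess}) this is well defined, and since $\pb_t=\th_t^2 p^*_k$, any integral of the form $\int\lf\langle g, \int(\cdots)\d z_k\rh\rangle\d z$ can be rewritten either as $\int\langle g,A_t\rangle\,\d\mub_t$ or as $\int\langle g,A_t\rangle\th_t^2\,\d\mu^*_k$, which is what lets one move between the two measures freely.

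For the two identities themselves, I would fix the outer adjoint to be $\tgr_x^*$ (first identity) or $\tgr_y^*$ (second identity), write it generically as $\tgr_\bullet^*$, and expand, for $\#\in\{x,y\}$,
$\gr_y\!\lf(\tgr_\bullet^*\gr_\#\th_t/\th_t\rh) = \th_t^{-1}\gr_y\!\lf(\tgr_\bullet^*\gr_\#\th_t\rh) - \th_t^{-1}\lf(\tgr_\bullet^*\gr_\#\th_t\rh)\gr_y\log\th_t$.
The key commutators are $[\gr_y,\tgr_x^*]=0$ — because $U$ depends only on $x$, so $\tgr_x^*$ commutes with every $\dr_{y_j}$ — and $[\gr_y,\tgr_y^*]=\eps_k^{-1}I_d$, coming from differentiating the multiplier $y/\eps_k$ in $\tgr_y^*$; the latter is precisely what produces the extra term $\frac{4}{\eps_k}\int\lf\langle\gr_\#\log\th_t,\int(\cdots)\d z_k\rh\rangle\d z$ in the second identity and is absent from the first. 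After pairing with $A_t$ under $\mu^*_k$ via the observation above, one integrates by parts through the adjoint relation $\E_{\mu^*_k}[f\cdot\gr_z g]=\E_{\mu^*_k}[g\,\tgr_z^* f]$ to move $\tgr_\bullet^*$ off $\gr_\#\th_t$ and onto the product $A_t\th_t$; applying the product rule — and, where needed, differentiating under the $\d z_k$-integral in \eqref{eq:A}, which hits $\gr_x U(x)$ only in the $x$-direction (producing $\gr_x^2 U$) and otherwise the conditional density $\pb_{t|T_k}$ — collects exactly the three types of terms listed in the lemma: the term $\langle\gr_y\gr_\#\log\th_t,\gr_\bullet A_t\rangle_F$ against $\d\mub_t$, the quadratic terms $\langle\gr_y\gr_\#\th_t, A_t(\gr_\bullet\th_t)^{\T}\rangle_F$ and $\langle\gr_\bullet\gr_y\th_t, A_t(\gr_\#\th_t)^{\T}\rangle_F$ against $\d\mu^*_k$, and (only for $\tgr_\bullet^*=\tgr_y^*$) the commutator term. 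All boundary terms vanish, again because $\pb_t$ is smooth, strictly positive and $U$ and $\pb_t$ enjoy the growth and moment bounds of Section~\ref{s:disprocess}. This is the same computation as in \cite[Lemma~3]{MaBEJ21}, now with $\eps$ replaced by the frozen temperature $\eps_k$ and the density by $\pb_t$ on the slice $[T_k,T_{k+1}]$; the bookkeeping of the $\gr_x\log\th_t$ and $\gr_y\log\th_t$ factors is the only genuinely delicate point, and I expect it to be the main obstacle.

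For the concluding statement about \eqref{b1}, recall from \eqref{a1}--\eqref{b1} that $\gr_z\!\lf(\frac{4}{\th_t}\tgr_z^*S\gr_z\th_t + 2\gamma(\eps_k)\log\th_t\rh)$ is tested against $\hat v_{t,k}-v_{t,k}=(0,\gr_x U(x)-\gr_x U(x_k))^{\T}$, so only the $y$-component enters, and $\frac{4}{\th_t}\tgr_z^*S\gr_z\th_t = \frac{4}{\th_t}\lf(\tgr_x^*\gr_x\th_t+\tgr_x^*\gr_y\th_t+\tgr_y^*\gr_x\th_t+\tgr_y^*\gr_y\th_t\rh)$. Applying the first identity with $\#=x$ and with $\#=y$, the second with $\#=x$ and with $\#=y$, and adding the contribution $\int\langle 2\gamma(\eps_k)\gr_y\log\th_t,\int(\cdots)\d z_k\rangle\d z$ from the $2\gamma(\eps_k)\log\th_t$ piece of $\delta H_{\gamma(\eps_k)}(\mub_t|\mu^*_k)/\delta\mub_t$, one checks that: the four quadratic terms against $\d\mu^*_k$ cancel in pairs using $\gr_x\gr_y\th_t=\gr_y\gr_x\th_t$; the four terms against $\d\mub_t$ assemble, through the block form $S=\begin{pmatrix} I_d & I_d\\ I_d & I_d\end{pmatrix}$, into $4\int\langle\gr_z\gr_y\log\th_t, S\gr_z A_t\rangle_F\,\d\mub_t$; and the two commutator terms together with the $2\gamma(\eps_k)$ term combine into $\int\lf\langle\frac{4}{\eps_k}\gr_x\log\th_t+\lf(2\gamma(\eps_k)+\frac{4}{\eps_k}\rh)\gr_y\log\th_t,\ \int\lf(\gr_x U(x)-\gr_x U(x_k)\rh)\pb_{t|T_k}(z|z_k)\pb_k(z_k)\d z_k\rh\rangle\d z$, which is exactly the asserted expression for \eqref{b1}. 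This last step is purely algebraic once the two identities are in hand.
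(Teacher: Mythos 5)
Your proposal is correct and follows essentially the approach the paper intends: the paper gives no written proof of Lemma~\ref{lemb2}, invoking only that it is ``in the same spirit of'' \cite[Lemma 3]{MaBEJ21}, and your outline (rewrite the inner $\d z_k$-integral as $A_t\pb_t$, expand by the quotient rule, use $[\gr_y,\tgr_x^*]=0$ and $[\gr_y,\tgr_y^*]=\eps_k^{-1}$ to generate the $\tfrac{4}{\eps_k}$ term, then move $\tgr_x^*$, $\tgr_y^*$ onto the products via the $\mu^*_k$-adjoint relation) reproduces exactly the stated identities, and your final recombination for \eqref{b1} — pairwise cancellation of the $\d\mu^*_k$ quadratic terms by symmetry of mixed partials, assembly of the $\d\mub_t$ terms through $S$, and collection of the commutator and $2\gamma(\eps_k)$ contributions — is the right bookkeeping.
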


%
%

%
%

\vspace{0.5em}
	Using with \eqref{a2} and Lemma~\ref{lemb2}, we derive that
	\begin{eqnarray} \label{eq:dHk_inter}
		&&\frac{\d}{\d t} H_{\gamma(\eps_k)}(\mub_t | \mu^*_k)  \\
		 & = & -4\;\E_{\mub_t} \lf[\lf\langle \gr_z \log\th_t, M_k\gr_z\log\th_t \rh\rangle \rh]  -8\eps_k \;\E_{\mub_t} \lf[\lf\langle \gr_z\gr_y\log\th_t, S\gr_z\gr_y\log\th_t  \rh\rangle_F  \rh] \no \nonumber\\
		&  & + 4\int \lf\langle \gr_z\gr_y\log\th_t, S\gr_z A_t(z) \rh\rangle_F \d\mub_t (z) \nonumber \\
		&& + \int\bigg\langle \frac{4}{\eps_k}\gr_x\log \th_t + \lf(2\gamma(\eps_k) +\frac{4}{\eps_k}  \rh)\gr_y\log\th_t, \nonumber \\
		&&\;\;\;\;\; \;\;\;\;\; \;\;\;\;\;
			\int \lf(\gr_x U(x) -\gr_x U(x_k) \rh)\; \pb_{t|T_k}(z|z_k) \; \pb_k(z_k)\d z_k \bigg\rangle \d z. \nonumber
	\end{eqnarray}

\begin{lem}
\label{lem:young}
	It holds that
\begin{eqnarray*}
        &  & \int\bigg\langle \frac{4}{\eps_k}\gr_x\log \th_t + \lf(2\gamma(\eps_k) +\frac{4}{\eps_k}  \rh)\gr_y\log\th_t, \\
		&&\;\;\;\;\; \;\;\;\;\; \;\;\;\;\;
			\int \lf(\gr_x U(x) -\gr_x U(x_k) \rh)\; \pb_{t|T_k}(z|z_k) \; \pb_k(z_k)\d z_k \bigg\rangle \d z\\
        & \le & \;\E_{\mub_t} \lf[\lf|\gr_x\log\th_t\rh|^2\rh] + \lf( 1+ \frac{1}{2} \eps_k\gamma(\eps_k) \rh) \;\E_{\mub_t} \lf[\lf| \gr_y\log\th_t \rh|^2\rh]\\
        & & + \frac{4L^2}{\eps_k^2} \lf( 2+ \frac{1}{2}\eps_k\gamma(\eps_k) \rh) \;\E  \lf[\lf|\Xb_t - \Xb_k\rh|^2\rh].
\end{eqnarray*}
\end{lem}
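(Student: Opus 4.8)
The plan is to apply Young's inequality to the two inner products appearing on the left-hand side, with the weights tuned so that the coefficients of $\E_{\mub_t}[|\gr_x\log\th_t|^2]$ and $\E_{\mub_t}[|\gr_y\log\th_t|^2]$ come out exactly as stated, and then to control the leftover quadratic functional of $\gr_x U(x)-\gr_x U(x_k)$ by the Lipschitz property of $\gr_x U$. First I would observe that, by the definition \eqref{eq:A}, the inner integral over $z_k$ is precisely $\pb_t(z)\,A_t(z)$, so the left-hand side equals $\E_{\mub_t}\lf[\lf\langle \tfrac{4}{\eps_k}\gr_x\log\th_t + \lf(2\gamma(\eps_k)+\tfrac{4}{\eps_k}\rh)\gr_y\log\th_t,\; A_t\rh\rangle\rh]$.

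Next I would split this inner product into its $\gr_x$- and $\gr_y$-parts and apply to each the elementary estimate $\langle a,b\rangle\le\tfrac{\lambda}{2}|a|^2+\tfrac{1}{2\lambda}|b|^2$ with the choice $\lambda=\eps_k/2$ in both cases. For the $\gr_x$-part this gives the bound $\E_{\mub_t}[|\gr_x\log\th_t|^2]+\tfrac{4}{\eps_k^2}\E_{\mub_t}[|A_t|^2]$, and for the $\gr_y$-part it gives $\lf(1+\tfrac{1}{2}\eps_k\gamma(\eps_k)\rh)\E_{\mub_t}[|\gr_y\log\th_t|^2]+\lf(\tfrac{2\gamma(\eps_k)}{\eps_k}+\tfrac{4}{\eps_k^2}\rh)\E_{\mub_t}[|A_t|^2]$; the weight $\eps_k/2$ is picked precisely so that the coefficient of $\E_{\mub_t}[|\gr_y\log\th_t|^2]$ is exactly $1+\tfrac{1}{2}\eps_k\gamma(\eps_k)$, which is what makes this term absorbable against the good term produced by $M_k$ in \eqref{eq:dHk_inter} at the next stage. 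Adding the two contributions, the coefficient of $\E_{\mub_t}[|A_t|^2]$ becomes $\tfrac{8}{\eps_k^2}+\tfrac{2\gamma(\eps_k)}{\eps_k}=\tfrac{4}{\eps_k^2}\lf(2+\tfrac{1}{2}\eps_k\gamma(\eps_k)\rh)$, so it only remains to show $\E_{\mub_t}[|A_t|^2]\le L^2\,\E[|\Xb_t-\Xb_k|^2]$.

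Finally, for this last estimate I would invoke Bayes' rule: since $\pb_{t|T_k}(z|z_k)\,\pb_k(z_k)/\pb_t(z)$ is the conditional density of $\Zb_k$ given $\Zb_t=z$, the random variable $A_t(\Zb_t)$ equals the conditional expectation $\E[\gr_x U(\Xb_t)-\gr_x U(\Xb_k)\mid\Zb_t]$. The $L$-Lipschitz property of $\gr_x U$ together with two applications of (conditional) Jensen's inequality then yield $|A_t(\Zb_t)|^2\le L^2\,\E[|\Xb_t-\Xb_k|^2\mid\Zb_t]$, and taking expectations and using the tower property gives $\E_{\mub_t}[|A_t|^2]=\E[|A_t(\Zb_t)|^2]\le L^2\,\E[|\Xb_t-\Xb_k|^2]$ (which, by Proposition~\ref{prop:disboundx}, is itself $O(\D t_k^2)$). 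Substituting this bound completes the argument. There is no genuinely difficult step here: the only points needing care are the bookkeeping of the Young weights so that the $\gr_y$-coefficient lands on $1+\tfrac{1}{2}\eps_k\gamma(\eps_k)$, and the identification of $A_t$ as a conditional expectation, which is what legitimizes passing the absolute value and the square inside.
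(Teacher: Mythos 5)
Your proposal is correct and follows essentially the same route as the paper: the same Young inequality (your $\lambda=\eps_k/2$ is exactly the paper's $a\cdot b\le\frac{\eps_k}{4}|a|^2+\frac{1}{\eps_k}|b|^2$), the same identification of the inner integral as a conditional expectation via $\pb_{t|T_k}(z|z_k)\pb_k(z_k)=\pb_t(z)\pb_{T_k|t}(z_k|z)$, and the same use of Jensen plus the $L$-Lipschitz property of $\gr_x U$. The coefficient bookkeeping matches the stated bound exactly, so there is nothing to add.
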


\begin{proof}
Using $a\cdot b\le \frac{\eps_k}{4} |a|^2 + \frac{1}{\eps_k}|b|^2 $ and $\pb_{t|T_k} (z|z_k)\pb_k(z_k) = \pb_t(z) \pb_{T_k|t}(z_k | z) $ where the backward conditional density function $\pb_{T_k|t}(z_k | z) := p(\Zb_k = z_k | \Zb_t =z ) $, and by Jensen's inequality, we can show that
\begin{eqnarray*}
        &  & \frac{4}{\eps_k} \int\lf\langle \gr_x\log \th_t,  \; \int (\gr_x U(x) - \gr_x U(x_k) ) \; \pb_{t|T_k}(z|z_k) \; \pb_k(z_k)\d z_k \rh\rangle\d z\\
        & = & \frac{4}{\eps_k} \int\lf\langle \gr_x\log \th_t,  \; \int ( \gr_x U(x) - \gr_x U(x_k) )\pb_{T_k|t}(z_k | z) \d z_k  \rh\rangle \pb_t(z) \d z \\
        & \le & \;\E_{\mub_t} \lf[\lf|\gr_x\log\th_t\rh|^2\rh] + \frac{4}{\eps_k^2} \;\E \lf[\lf|\gr_x U(\Xb_t) - \gr_x U(\Xb_k)\rh|^2\rh]\\
        & \le & \;\E_{\mub_t} \lf[\lf|\gr_x\log\th_t\rh|^2\rh] + \frac{4L^2}{\eps_k^2} \;\E \lf[\lf|\Xb_t-\Xb_k\rh|^2\rh].
\end{eqnarray*}
The $\gr_y\log\th_t$ term can be treated similarly. The proof then completes.
\end{proof}

\begin{lem}
\label{lem:gradA}
Assume that $\D t_k\le 1/L$ for every $k$ and $\gr_x^2 U $ is $L'$-Lipschitz. Then
\begin{eqnarray}\label{eq:gradientA}
        &  & 4 \int \lf\langle\gr_z\gr_y\log\th_t, S\gr_z A_t(z)\rh\rangle_F \d\mub_t(z) \\
        & \le & 8\eps_k\;\E_{\mub_t} \lf[ \lf\langle\gr_z\gr_y \log\th_t, S\gr_z\gr_y \log\th_t \rh\rangle_F \rh]  + \frac{(L')^2}{\eps_k} \;\E \lf[\lf|\Xb_t-\Xb_k\rh|^2\rh] + \frac{144}{\eps_k} d^{3/2} L^2 \D t_k^2.\no
\end{eqnarray}
\end{lem}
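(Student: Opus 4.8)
My plan is to bound the left-hand side using the Frobenius Cauchy--Schwarz inequality and then control $\gr_z A_t(z)$. The first step is to apply the inequality $\langle U, SV\rangle_F \le \eps_k\|S^{1/2}V\|_F^2 + \frac{1}{\eps_k}\|S^{1/2}U\|_F^2$ (or, since $S\succeq 0$ with $\|S\|\le 2$, simply $4\langle \gr_z\gr_y\log\th_t, S\gr_z A_t\rangle_F \le 8\eps_k\langle \gr_z\gr_y\log\th_t, S\gr_z\gr_y\log\th_t\rangle_F + \frac{2}{\eps_k}\langle \gr_z A_t, S\gr_z A_t\rangle_F$, matching the first term on the r.h.s. of \eqref{eq:gradientA}). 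After taking $\E_{\mub_t}$, the task reduces to showing
$$
\frac{2}{\eps_k}\E_{\mub_t}\big[\langle \gr_z A_t, S\gr_z A_t\rangle_F\big] \le \frac{(L')^2}{\eps_k}\E\big[|\Xb_t-\Xb_k|^2\big] + \frac{144}{\eps_k}d^{3/2}L^2\D t_k^2,
$$
i.e., to an $L^2(\mub_t)$-bound on $\gr_x A_t$ and $\gr_y A_t$.

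The core of the argument is differentiating the definition \eqref{eq:A} of $A_t$. Writing $A_t(z)\pb_t(z) = \int(\gr_x U(x)-\gr_x U(x_k))\pb_{t|T_k}(z|z_k)\pb_k(z_k)\d z_k$ and using $\pb_{t|T_k}(z|z_k)\pb_k(z_k)=\pb_t(z)\pb_{T_k|t}(z_k|z)$, I get $A_t(z)=\E[\gr_x U(\Xb_t)-\gr_x U(\Xb_k)\mid \Zb_t=z]$. Differentiating in $z$ produces two kinds of terms: one where the derivative hits $\gr_x U(x)$ (inside the conditional expectation), giving $\gr_x^2 U(x)$ contracted with $\gr_x$ of the argument — since $\Xb_t$ depends on $\Xb_k,\Yb_k$ and the noise through \eqref{eq:xbtybt}, the relevant quantity is $\gr_x U(\Xb_t)-\gr_x U(\Xb_k)$ compared against $\gr_x U(\Xb_t)$, which after using $L'$-Lipschitzness of $\gr_x^2 U$ contributes the $(L')^2\E[|\Xb_t-\Xb_k|^2]$ term; and one where the derivative hits the backward transition density $\pb_{T_k|t}(z_k|z)$, producing a ``score'' term $\gr_z\log\pb_{T_k|t}$ that must be controlled by the Gaussian structure of the linearized dynamics on $[T_k,T_{k+1}]$. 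This is where the explicit solution \eqref{eq:xbtybt}--\eqref{eq:sigmak} and the bound $\D t_k\le 1/L$ enter: the covariance matrix $\Sigma_k$ has eigenvalues comparable to $\eps_k\D t_k^j$ for appropriate $j$, so the inverse-covariance factors in the score are of order $\eps_k^{-1}\D t_k^{-1}$ (or worse powers), and combined with $\E[|\gr_x U(\Xb_t)-\gr_x U(\Xb_k)|^2]\le L^2\E[|\Xb_t-\Xb_k|^2] = \O(\D t_k^2)$ from Proposition~\ref{prop:disboundx}, the various powers of $\D t_k$ and $\eps_k$ balance to give the $\frac{144}{\eps_k}d^{3/2}L^2\D t_k^2$ term (the $d^{3/2}$ coming from Frobenius norms of $d\times d$ blocks and the dimensional factor in the Gaussian moment bounds).

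The main obstacle will be the bookkeeping of the second type of term, i.e., controlling $\gr_z\big(\int(\gr_x U(x)-\gr_x U(x_k))\pb_{T_k|t}(z_k|z)\d z_k\big)$ when the gradient falls on the backward conditional density. One clean way to organize this is to go back to the forward representation: $A_t(z)\pb_t(z)$ is an integral against $\pb_{t|T_k}(z|z_k)$, and $\gr_z\pb_{t|T_k}(z|z_k) = \pb_{t|T_k}(z|z_k)\gr_z\log\pb_{t|T_k}(z|z_k)$, where the forward log-density gradient is explicit (it is affine in $z$ with coefficients built from $\Sigma_k^{-1}$ and the drift matrix exponential). Then one expands $\gr_z A_t = \gr_z(A_t\pb_t)/\pb_t - A_t\gr_z\log\pb_t$; the $A_t\gr_z\log\pb_t$ piece recombines with things already appearing, and the remaining piece is estimated by Cauchy--Schwarz in the $z_k$-integral plus the moment bounds of Propositions~\ref{prop:dismoment} and \ref{prop:disboundx}. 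Throughout I will use $\D t_k\le 1/L\le 1$ freely to absorb higher powers of $\D t_k$ into lower ones and to keep all constants explicit; the final collection of constants is where the somewhat crude ``$144$'' comes from, and I would not optimize it.
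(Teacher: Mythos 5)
Your first step (Young's inequality with the semi-definite matrix $S$, absorbing the Hessian-type term into $8\eps_k\,\E_{\mub_t}[\langle\gr_z\gr_y\log\th_t,S\gr_z\gr_y\log\th_t\rangle_F]$) is fine and is also how the paper proceeds, but the heart of the lemma is the control of $\gr_z A_t$, and there your plan has a genuine gap. The paper does not estimate any score term at all: it uses the exact identity \eqref{eq:gradA} (adapted from \cite[Lemma 10]{MaBEJ21}), in which differentiating the conditional expectation $A_t(z)=\E[\gr_x U(\Xb_t)-\gr_x U(\Xb_k)\,|\,\Zb_t=z]$ produces only bounded matrices built from $\gr_x^2U(x)-\gr_x^2U(x_k)$ and $\gr_x^2 U(x_k)\lf(I_d+\varpi(t)\gr_x^2U(x_k)\rh)^{-1}$ integrated against the backward kernel --- no inverse covariance factor appears. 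This exact cancellation (coming from the change-of-variables/implicit-differentiation structure of the frozen-drift linear dynamics \eqref{eq:xbtybt}) is precisely what yields $\|\gr_z A_t\|=\O(L'\,|\Xb_t-\Xb_k|+L\D t_k)$ and hence the $\frac{(L')^2}{\eps_k}\E[|\Xb_t-\Xb_k|^2]+\frac{144}{\eps_k}d^{3/2}L^2\D t_k^2$ bound. Your plan instead keeps the derivative falling on the transition density as a genuine score term and hopes that "the powers balance". They do not, under the crude bounds you propose: the forward Gaussian score $\gr_z\log\pb_{t|T_k}(z|z_k)$ has conditional second moment of order $d\lf(\frac{1}{\eps_k(t-T_k)^3}+\frac{1}{\eps_k(t-T_k)}\rh)$, while $\E[|\gr_x U(\Xb_t)-\gr_x U(\Xb_k)|^2]$ only supplies a factor of order $(t-T_k)^2$, so a Cauchy--Schwarz estimate of the cross term diverges as $t\downarrow T_k$; one must exploit the exact correlation/cancellation (Gaussian integration by parts, or the computation behind \eqref{eq:gradA}) rather than bound the two factors separately. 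This is the missing idea, and labelling it "bookkeeping" understates it.

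Your fallback decomposition $\gr_z A_t=\gr_z(A_t\pb_t)/\pb_t-A_t\gr_z\log\pb_t$ has the same defect in a different guise: it introduces the score $\gr_z\log\pb_t$ of the unconditioned law $\mub_t$, for which no bound is available at this stage of the argument, and the resulting term $\frac{1}{\eps_k}\E_{\mub_t}\big[|A_t|^2|\gr_z\log\pb_t|^2\big]$ does not recombine with anything already present (the quantities at hand involve $\gr_z\gr_y\log\th_t$ and $\gr_z\log\th_t$, not the Lebesgue score of $\pb_t$). A secondary, more cosmetic point: applying Young once to the full $\langle\gr_z A_t,S\gr_z A_t\rangle_F$ with coefficient $\frac{2}{\eps_k}$ and then splitting $\gr_z A_t$ into its two pieces costs an extra factor compared with the paper, which splits first and applies Young to each piece with coefficient $\frac{1}{\eps_k}$; your route would therefore not reproduce the stated constants $(L')^2$ and $144$ even if the score issue were resolved.
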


\begin{proof}
Without loss of generality, we assume $L\ge 1$. Using the same proof as the one of \cite[Lemma 10]{MaBEJ21}, we can show that
\begin{eqnarray}
\label{eq:gradA}
    \gr_z A_t(z) = \int \lf( \begin{array}{c}
        \gr_x^2 U(x) - \gr_x^2 U(x_k) - \gr_x^2 U(x_k)\lf( (I_d +\varpi(t)\gr_x^2 U(x_k))^{-1} -I_d \rh)   \\
        \lf(e^{t-T_k} -1 \rh) \gr_x^2 U(x_k) (I_d +\varpi(t)\gr_x^2 U(x_k))^{-1}
        \end{array}\rh) \nonumber \\
         \pb_{T_k|t}(z_k|z)\d z_k,
\end{eqnarray}
where $\varpi(t)=e^{t-T_k} - 1 - (t-T_k) $. As a result,
\begin{eqnarray*}
        &  &  4 \int \lf\langle\gr_z\gr_y\log\th_t, S\gr_z A_t(z)\rh\rangle_F \d\mub_t(z)\\
        & = & 4 \E\lf[ \lf\langle \gr_z\gr_y \log\th_t, S \lf( \begin{array}{c}
        \gr_x^2 U(\Xb_t) - \gr_x^2 U(\Xb_k) \\
        0
        \end{array}\rh) \rh\rangle_F \rh]\\
        &  & + 4 \E\lf[ \lf\langle \gr_z\gr_y \log\th_t, S \lf( \begin{array}{c}
         - \gr_x^2 U(\Xb_k)\lf( (I_d +\varpi(t)\gr_x^2 U(\Xb_k))^{-1} -I_d \rh)   \\
        \lf(e^{t-T_k} -1 \rh) \gr_x^2 U(\Xb_k) (I_d +\varpi(t)\gr_x^2 U(\Xb_k))^{-1}
        \end{array}\rh)  \rh\rangle_F \rh].
\end{eqnarray*}
On the one hand,
\begin{eqnarray*}
        &  & 4 \E\lf[ \lf\langle \gr_z\gr_y \log\th_t, S \lf( \begin{array}{c}
        \gr_x^2 U(\Xb_t) - \gr_x^2 U(\Xb_k) \\
        0
        \end{array}\rh) \rh\rangle_F \rh]\\
        & \le & 4 \eps_k \int \lf\langle \gr_z\gr_y \log\th_t,  S\gr_z\gr_y \log\th_t  \rh\rangle_F \d \mub_t(z) \\
        &  & + \frac{1}{\eps_k} \E\lf[ \lf\langle \lf( \begin{array}{c}
        \gr_x^2 U(\Xb_t) - \gr_x^2 U(\Xb_k) \\
        0
        \end{array}\rh), S \lf( \begin{array}{c}
        \gr_x^2 U(\Xb_t) - \gr_x^2 U(\Xb_k) \\
        0
        \end{array}\rh)   \rh\rangle_F \rh] \\
        & = & 4\eps_k\;\E_{\mub_t} \lf[ \lf\langle\gr_z\gr_y \log\th_t, S\gr_z\gr_y \log\th_t \rh\rangle_F \rh]  + \frac{1}{\eps_k} \;\E \lf[ \lf\|\gr_x^2 U(\Xb_t) -\gr_x^2 U(\Xb_k) \rh\|^2_F  \rh]\\
        & \le & 4\eps_k\;\E_{\mub_t} \lf[ \lf\langle\gr_z\gr_y \log\th_t, S\gr_z\gr_y \log\th_t \rh\rangle_F \rh] + \frac{(L')^2}{\eps_k} \;\E \lf[\lf|\Xb_t-\Xb_k\rh|^2\rh],
\end{eqnarray*}
where the last inequality is the case due to the $L'$-Lipschitz of $\gr_x^2 U$. On the other hand, denoting $J_t(\Xb_k):=\lf( \begin{array}{c}
         - \gr_x^2 U(\Xb_k)\lf( (I_d +\varpi(t)\gr_x^2 U(\Xb_k))^{-1} -I_d \rh)   \\
        \lf(e^{t-T_k} -1 \rh) \gr_x^2 U(\Xb_k) (I_d +\varpi(t)\gr_x^2 U(\Xb_k))^{-1}
        \end{array}\rh)$, we have
\begin{eqnarray*}
        &  & 4\; \E\lf[ \lf\langle \gr_z\gr_y \log\th_t, S \lf( \begin{array}{c}
         - \gr_x^2 U(\Xb_k)\lf( (I_d +\varpi(t)\gr_x^2 U(\Xb_k))^{-1} -I_d \rh)   \\
        \lf(e^{t-T_k} -1 \rh) \gr_x^2 U(\Xb_k) (I_d +\varpi(t)\gr_x^2 U(\Xb_k))^{-1}
        \end{array}\rh)  \rh\rangle_F \rh]\\
        & \le & 4\eps_k\;\;\E_{\mub_t} \lf[ \lf\langle\gr_z\gr_y \log\th_t, S\gr_z\gr_y \log\th_t \rh\rangle_F \rh] + \frac{1}{\eps_k}\; \E_{\mub_k} \lf[ \lf\langle J_t(\Xb_k), S J_t(\Xb_k) \rh\rangle_F \rh].
\end{eqnarray*}
As a result, we derive,
\begin{eqnarray*}
        &  & 4 \int \lf\langle\gr_z\gr_y\log\th_t, S\gr_z A_t(z)\rh\rangle_F \d\mub_t(z)\\
        & \le & 8\eps_k\;\;\E_{\mub_t} \lf[ \lf\langle\gr_z\gr_y \log\th_t, S\gr_z\gr_y \log\th_t \rh\rangle_F \rh] + \frac{(L')^2}{\eps_k} \;\E \lf[\lf|\Xb_t-\Xb_k\rh|^2\rh]\\
        &  & +\frac{1}{\eps_k}\; \E_{\mub_k} \lf[ \lf\langle J_t(\Xb_k), S J_t(\Xb_k) \rh\rangle_F \rh].
\end{eqnarray*}
Comparing the above inequality with \eqref{eq:gradientA}, to complete the proof of the lemma, we only need to prove
\begin{equation}
\label{eq:boundJ1}
    \E_{\mub_k} \lf[ \lf\langle J_t(\Xb_k), S J_t(\Xb_k) \rh\rangle_F \rh] \le 144 d^{3/2} L^2 \D t_k^2.
\end{equation}

By Cauchy-Schwarz inequality and matrix norm inequality, we have
\begin{eqnarray}
        \E_{\mub_k} \lf[ \lf\langle J_t(\Xb_k), S J_t(\Xb_k) \rh\rangle_F \rh] & \le & \;\E_{\mub_k} \lf[\lf\| J_t(\Xb_k) \rh\|_F \lf\| SJ_t(\Xb_k)\rh\|_F  \rh]\no\\
        & \le & \;\E_{\mub_k} \lf[ \|S\|_F \cdot\rank \lf(J_t(\Xb_k)\rh) \lf\|J_t(\Xb_k)\rh\|^2_2 \rh]\no\\
        & \le & \label{eq:boundJ2} 2d^{\frac{3}{2}} \;\E_{\mub_k} \lf[ \lf\| J_t(\Xb_k) \rh\|^2_2 \rh].
\end{eqnarray}
It is straightforward to see that
\begin{eqnarray*}
        \lf\|J_t(\Xb_k)\rh\|_2 & \le & \sqrt{2} \max\Big\{\lf\| \gr_x^2 U(\Xb_k) \lf( (I_d +\varpi(t)\gr_x^2 U(\Xb_k))^{-1} -I_d \rh) \rh\|_2, \\
        &  & \lf\| \lf(e^{t-T_k} -1 \rh) \gr_x^2 U(\Xb_k) (I_d +\varpi(t)\gr_x^2 U(\Xb_k))^{-1} \rh\|_2 \Big\}  .
\end{eqnarray*}
Because $\lf\|\gr_x^2 U \rh\|_2\le L $, which is due to the $L$-Lipschitz of $\gr_x U$, and because $\D t_k\le 1/L $, we have $\varpi (t) \le \D t_k^2 $ and $\lf(I_d +\varpi(t)\gr_x^2 U(\Xb_k)\rh)^{-1}\preceq I_d -\varpi(t)\gr_x^2 U(\Xb_k) +\lf( \varpi(t)\gr_x^2 U(\Xb_k) \rh)^2 $. Consequently,

\begin{eqnarray*}
        &  & \lf\| \gr_x^2 U(\Xb_k) \lf( (I_d +\varpi(t)\gr_x^2 U(\Xb_k))^{-1} -I_d \rh) \rh\|_2\\
        & \le & \lf\| \gr_x^2 U(\Xb_k) \rh\|_2 \cdot \lf(  \lf\| \varpi(t)\gr_x^2 U(\Xb_k) \rh\|_2 +  \lf\| \varpi(t)\gr_x^2 U(\Xb_k) \rh\|_2^2 \rh) \\
        & \le & 2 L^2 \D t_k^2,
\end{eqnarray*}
and
\begin{eqnarray*}
        &  & \lf\| \lf(e^{t-T_k} -1 \rh) \gr_x^2 U(\Xb_k) \lf(I_d +\varpi(t)\gr_x^2 U(\Xb_k)\rh)^{-1} \rh\|_2 \\
        & \le & 2\D t_k \lf\| \gr_x^2 U(\Xb_k) \rh\|_2 \cdot \lf\|\lf(I_d +\varpi(t)\gr_x^2 U(\Xb_k)\rh)^{-1}  \rh\|_2\\
        & \le & 2 L\D t_k \lf(\lf\|I_d\rh\|_2 + \lf\| \varpi(t)\gr_x^2 U(\Xb_k) \rh\|_2 + \lf\| \varpi(t)\gr_x^2 U(\Xb_k) \rh\|_2^2  \rh)\\
        & \le & 2 L \D t_k \lf(1 + L\D t_k^2 + L^2 \D t_k^4 \rh) \le 6L\D t_k.
\end{eqnarray*}
	As a result, $ \lf\|J_t(\Xb_k)\rh\|_2  \le  \sqrt{2} \max\lf\{ 2L^2\D t_k^2, 6L\D t_k \rh\} = 6\sqrt{2}L\D t_k$. This, together with \eqref{eq:boundJ2}, immediately leads to \eqref{eq:boundJ1}.
\end{proof}


%

	Recall that $\rho_k$ is the constant in the LSI satisfied by $\mu^*_k$ in Proposition~\ref{prop:LSI},
	and $\gamma(\eps_k)=\frac{4}{\eps_k}\lf( 1+ L^2 \rh) $.
	Denote  $c(\eps_k) :=\frac{1}{\gamma(\eps_k)} = \frac{\eps_k}{4\lf( 1+ L^2 \rh)}  $
	and
	$$
		\tilde{M}_k
		:=
		\begin{pmatrix}
 		\frac{7}{4} I_d & 2I_d -\gr_x^2 U\\
		2I_d - \gr_x^2 U & \lf( \frac{7}{4} + \frac{7}{8} \gamma(\eps_k)\eps_k  \rh)I_d - 2\gr_x^2 U
		\end{pmatrix} \in\M_{2d\times 2d}(\R).
	$$
	As the discrete time analogue to Lemma~\ref{Mpsd}, we have the following inequality for $\tilde{M}_k$. The proof is omitted because it is similar to that of Lemma~\ref{Mpsd}.

	\begin{lem} \label{disMpsd}
		Let $(\eps_k)_{k \ge 0}$ be given as in Assumption~\ref{assumption2} and $\gr_x U$ be $L$-Lipschitz.
		Then, for sufficiently large $k \ge 0$,
		we have $\tilde{M}_k \succeq c(\eps_k)\rho_k \lf( S  + \frac{\gamma(\eps_k)}{2\rho_k} I_{2d} \rh) $.
	\end{lem}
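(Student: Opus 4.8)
The plan is to follow the proof of Lemma~\ref{Mpsd} almost verbatim. First I would record the algebraic simplification that makes the statement manageable: since $c(\eps_k)=1/\gamma(\eps_k)$, the quantity $c(\eps_k)\rho_k\cdot\frac{\gamma(\eps_k)}{2\rho_k}$ equals $\frac12$, so the claimed inequality $\tilde M_k\succeq c(\eps_k)\rho_k\big(S+\frac{\gamma(\eps_k)}{2\rho_k}I_{2d}\big)$ is equivalent to
\[
\tilde M_k \;\succeq\; \tfrac12 I_{2d} + c(\eps_k)\rho_k\,S .
\]
I would then observe that $\tilde M_k$ depends on $k$ only through the product $\gamma(\eps_k)\eps_k=4(1+L^2)$, which is constant; hence $\tilde M_k$ is in fact independent of $k$ and depends on the state only through the Hessian $\gr_x^2 U(x)$, whose eigenvalues lie in $[-L,L]$ since $\gr_x U$ is $L$-Lipschitz.

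Second, I would dispose of the perturbation term $c(\eps_k)\rho_k S$. Under the standing hypotheses $T_k\to\infty$, hence $\eps_k=E/\log T_k\to 0$, so $c(\eps_k)\to 0$ and $\rho_k=\chi(\eps_k)\exp(-E_*/\eps_k)\to 0$ by Proposition~\ref{prop:LSI}, while $S\succeq 0$ is a fixed matrix with $\|S\|_2=2$. Thus $c(\eps_k)\rho_k S\preceq 2c(\eps_k)\rho_k I_{2d}\to 0$, and it suffices to produce a constant $\delta>0$, depending only on $L$, such that $\tilde M_k-\frac12 I_{2d}\succeq\delta I_{2d}$ uniformly in the state; for every $k$ large enough that $2c(\eps_k)\rho_k\le\delta$ the assertion then follows.

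Third, to obtain such a $\delta$ I would diagonalise $\gr_x^2 U$ exactly as in the proof of Lemma~\ref{Mpsd}. Writing $\lambda\in[-L,L]$ for a generic eigenvalue of $\gr_x^2 U$, the spectrum of $\tilde M_k-\frac12 I_{2d}$ is the union over the $d$ eigenvalues $\lambda$ of the roots of
\[
\mu^2-\Big(\tfrac52+\tfrac72(1+L^2)-2\lambda\Big)\mu+\Big(\tfrac54\big(\tfrac54+\tfrac72(1+L^2)-2\lambda\big)-(2-\lambda)^2\Big)=0 .
\]
Using $|\lambda|\le L$ one checks that the coefficient $\frac52+\frac72(1+L^2)-2\lambda$ (the sum of the two roots) and the constant term (their product) are, after inserting the worst-case $\lambda$, quadratics in $L$ with negative discriminant, hence strictly positive; consequently both roots are real, positive, and bounded away from $0$ uniformly in $\lambda\in[-L,L]$ (the smaller root is at least the product divided by the sum). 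Any such uniform lower bound serves as $\delta$.

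The step I expect to require the most care is the second one: one must make the lower bound $\delta$ genuinely uniform, i.e.\ independent of the state and of $k$, so that it can absorb the vanishing perturbation $c(\eps_k)\rho_k S$; the eigenvalue algebra in the third step is routine once $|\lambda|\le L$ is exploited, and the first step is mere bookkeeping. It is also worth stating explicitly that $\eps_k\to 0$ relies on $T_k\to\infty$, which holds under the hypotheses of Theorem~\ref{thm2}.
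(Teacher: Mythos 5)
Your proposal is correct and follows essentially the same route as the paper, whose (omitted) proof simply mirrors Lemma~\ref{Mpsd}: use $c(\eps_k)\gamma(\eps_k)=1$ and $c(\eps_k)\rho_k\to 0$ to reduce the claim to a uniform positive-definiteness bound $\tilde M_k-\frac12 I_{2d}\succeq\delta I_{2d}$, then diagonalize $\gr_x^2 U$ and check that the resulting quadratics have positive trace and determinant for $|\lambda|\le L$. Your explicit treatment of the uniformity in the state (needed to absorb the vanishing term $c(\eps_k)\rho_k S$) is in fact slightly more careful than the paper's wording, and your characteristic polynomial for $\tilde M_k-\frac12 I_{2d}$ checks out.
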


	We finally present the main result in this subsection.

	\begin{prop} \label{prop:42}
		Let Assumptions~\ref{assumption1}, \ref{assumption2} hold.
		Suppose in addition that $\gr_x^2 U$ is $L'$-Lipschitz and that $ \D t_k\le\min\lf\{ \frac{\eta^*}{2} , \frac{1}{L} \rh\} $ for sufficiently large $k \ge 0$.
		Then, there exist some constants $C_1 > 0$ and $C_2 > 0$ such that, for sufficiently large $k \ge 0$ and for all $\alpha > 0$,
		\begin{equation*}
			H_{\gamma(\eps_k)}(\mub_{k+1} | \mu^*_k) \le \lf( 1- C_1 \D t_k \; T_k^{-\lf(\frac{E_*}{E} +\alpha \rh)} \rh) H_{\gamma(\eps_k)}(\mub_k | \mu^*_k) + C_2 \D t_k^3 \; \lf(\log T_k\rh)^2.
		\end{equation*}
	\end{prop}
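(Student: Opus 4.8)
The plan is to upgrade the instantaneous identity \eqref{eq:dHk_inter} for $\frac{\d}{\d t}H_{\gamma(\eps_k)}(\mub_t|\mu^*_k)$ on $[T_k,T_{k+1}]$ into a differential inequality of the form $\frac{\d}{\d t}H_{\gamma(\eps_k)}(\mub_t|\mu^*_k)\le -c(\eps_k)\rho_k H_{\gamma(\eps_k)}(\mub_t|\mu^*_k)+\text{(small remainder)}$, and then integrate it over the interval of length $\D t_k$ by a Gr\"{o}nwall argument.

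First I would dispose of the discretization-error terms in \eqref{eq:dHk_inter}. Bounding its last line by Lemma~\ref{lem:young} and the term $4\int\langle\gr_z\gr_y\log\th_t,S\gr_z A_t(z)\rangle_F\,\d\mub_t$ by Lemma~\ref{lem:gradA}, the nonnegative contribution $8\eps_k\,\E_{\mub_t}[\langle\gr_z\gr_y\log\th_t,S\gr_z\gr_y\log\th_t\rangle_F]$ in the bound of Lemma~\ref{lem:gradA} is absorbed by the term $-8\eps_k\,\E_{\mub_t}[\langle\gr_z\gr_y\log\th_t,S\gr_z\gr_y\log\th_t\rangle_F]$ already present in \eqref{eq:dHk_inter}. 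Using $\eps_k\gamma(\eps_k)=4(1+L^2)$, the remaining (second-order-free) quadratic form equals
\[
-4\,\E_{\mub_t}\!\big[\langle\gr_z\log\th_t,\tilde{M}_k\,\gr_z\log\th_t\rangle\big],
\qquad
4\tilde{M}_k=4M_k-\diag\!\big(I_d,\,(1+\tfrac12\eps_k\gamma(\eps_k))I_d\big),
\]
with $\tilde{M}_k$ precisely the matrix introduced before Lemma~\ref{disMpsd}. The leftover remainder $R_k(t)$ is a sum of terms of the form $\eps_k^{-1}\E[|\Xb_t-\Xb_k|^2]$, $\eps_k^{-2}\E[|\Xb_t-\Xb_k|^2]$ and $\eps_k^{-1}\D t_k^2$; by Proposition~\ref{prop:disboundx} we have $\E[|\Xb_t-\Xb_k|^2]\le C\D t_k^2$ on $[T_k,T_{k+1}]$, and since $\eps_k=E/\log T_k$ for $k$ large (Assumption~\ref{assumption2}), this gives $R_k(t)\le C(\log T_k)^2\D t_k^2$.

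Next I would use Lemma~\ref{disMpsd} together with the log-Sobolev inequality of Proposition~\ref{prop:LSI}. For $k$ large, $\tilde{M}_k\succeq c(\eps_k)\rho_k\big(S+\tfrac{\gamma(\eps_k)}{2\rho_k}I_{2d}\big)$, so, exactly as in the continuous-time computation preceding Lemma~\ref{Mpsd} (using $c(\eps_k)\gamma(\eps_k)=1$ and the log-Sobolev bound $I(\mub_t|\mu^*_k)\ge 2\rho_k\KL(\mub_t|\mu^*_k)$), we obtain $-4\,\E_{\mub_t}[\langle\gr_z\log\th_t,\tilde{M}_k\gr_z\log\th_t\rangle]\le -c(\eps_k)\rho_k H_{\gamma(\eps_k)}(\mub_t|\mu^*_k)$. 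Hence, for $k$ large and $t\in[T_k,T_{k+1}]$,
\[
\frac{\d}{\d t}H_{\gamma(\eps_k)}(\mub_t|\mu^*_k)\ \le\ -c(\eps_k)\rho_k\,H_{\gamma(\eps_k)}(\mub_t|\mu^*_k)+C(\log T_k)^2\D t_k^2 ,
\]
where the finiteness and differentiability of $t\mapsto H_{\gamma(\eps_k)}(\mub_t|\mu^*_k)$ on $[T_k,T_{k+1}]$ needed here are checked as in Lemma~\ref{finiteH}. Since $c(\eps_k)\rho_k$ and the source term are constant in $t$ on this interval, Gr\"{o}nwall's lemma together with $\tfrac{1-e^{-x}}{x}\le1$ $(x>0)$ yields
\[
H_{\gamma(\eps_k)}(\mub_{k+1}|\mu^*_k)\ \le\ e^{-c(\eps_k)\rho_k\D t_k}\,H_{\gamma(\eps_k)}(\mub_k|\mu^*_k)+C(\log T_k)^2\D t_k^3 .
\]

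It remains to estimate $c(\eps_k)\rho_k$ from below. With $\eps_k=E/\log T_k$ one has $c(\eps_k)=\tfrac{E}{4(1+L^2)\log T_k}$, $e^{-E_*/\eps_k}=T_k^{-E_*/E}$, and $\chi(\eps_k)=T_k^{o(1)}$ by the sub-exponentiality of $\chi$; combining these, for every $\alpha>0$ there is $C_1'>0$ with $c(\eps_k)\rho_k\ge C_1'\,T_k^{-(E_*/E+\alpha)}$ for $k$ large. Moreover $c(\eps_k)\rho_k\to0$ and $\D t_k$ is bounded, so $c(\eps_k)\rho_k\D t_k\le1$ for $k$ large, whence $e^{-c(\eps_k)\rho_k\D t_k}\le 1-\tfrac12 c(\eps_k)\rho_k\D t_k\le 1-\tfrac{C_1'}{2}\,\D t_k\,T_k^{-(E_*/E+\alpha)}$, giving the assertion with $C_1=C_1'/2$. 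The genuinely delicate points are the bookkeeping of the negative powers of $\eps_k$ in $R_k(t)$ (and their translation into powers of $\log T_k$) and the uniform-in-$\alpha$ sub-exponential lower bound for $c(\eps_k)\rho_k$; the matrix identity $4\tilde{M}_k=4M_k-\diag(\cdot)$ and the Gr\"{o}nwall step are routine given the lemmas above.
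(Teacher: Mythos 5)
Your argument is correct and follows essentially the same route as the paper: combine \eqref{eq:dHk_inter} with Lemmas~\ref{lem:young} and \ref{lem:gradA} (cancelling the second-order term), recognize the resulting quadratic form as $-\E_{\mub_t}[\langle \gr_z\log\frac{\pb_t}{p^*_k},\tilde{M}_k\gr_z\log\frac{\pb_t}{p^*_k}\rangle]$, apply Lemma~\ref{disMpsd} with the LSI of Proposition~\ref{prop:LSI}, bound the remainder via Proposition~\ref{prop:disboundx} and $\eps_k^{-2}=(\log T_k/E)^2$, and conclude by Gr\"onwall together with the sub-exponential lower bound $c(\eps_k)\rho_k\ge C T_k^{-(E_*/E+\alpha)}$. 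The only deviation is cosmetic (you apply Gr\"onwall with the exact rate $c(\eps_k)\rho_k$ and then lower-bound it via $1-e^{-x}\ge x/2$, whereas the paper substitutes the $T_k$-power rate before integrating), so the proof matches the paper's.
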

	\begin{proof}
	First, for sufficiently large $k\ge 0$, it follows from Lemma~\ref{disMpsd} that
	\begin{eqnarray*}
		&  &  -\;\E_{\mub_t} \lf[\lf\langle \gr_z \log\frac{\pb_t}{p^*_k}, \tilde{M}_k\gr_z\log\frac{\pb_t}{p^*_k} \rh\rangle \rh]\\
		& \le & -c(\eps_k) \rho_k \;\E_{\mub_t} \lf[\lf\langle \gr_z \log\frac{\pb_t}{p^*_k}, S\gr_z\log\frac{\pb_t}{p^*_k} \rh\rangle + \frac{\gamma(\eps_k)}{2\rho_k} \lf| \gr_z\log\frac{\pb_t}{p^*_k} \rh|^2 \rh].
	\end{eqnarray*}
	Using the LSI satisfied by $\mu^*_k$ in  Proposition~\ref{prop:LSI}, we obtain that
	\begin{equation} \label{eq:discrhoH}
		-\;\E_{\mub_t} \lf[\lf\langle \gr_z \log\frac{\pb_t}{p^*_k}, \tilde{M}_k\gr_z\log\frac{\pb_t}{p^*_k} \rh\rangle \rh] \le -c(\eps_k) \rho_k H_{\gamma(\eps_k)}(\mub_t|\mu^*_k).
	\end{equation}
	
	Next, combining \eqref{eq:dHk_inter} with Lemmas~\ref{lem:young} and \ref{lem:gradA}, we obtain that
	\begin{eqnarray*}
		\frac{\d}{\d t} H_{\gamma(\eps_k)}(\mub_t | \mu^*_k)
		& \le & -\;\E_{\mub_t} \lf[\lf\langle \gr_z \log\frac{\pb_t}{p^*_k}, \tilde{M}_k\gr_z\log\frac{\pb_t}{p^*_k} \rh\rangle \rh] + \frac{144d^{\frac{3}{2}}L^2}{\eps_k} \D t_k^2 \\
		&  & + \lf( \frac{(L')^2}{\eps_k} + \frac{4L^2}{\eps_k^2} \lf( 2+ \frac{1}{2}\eps_k\gamma(\eps_k) \rh) \rh) \;\E \lf[\lf|\Xb_t-\Xb_k\rh|^2\rh].
	\end{eqnarray*}
	Plugging \eqref{eq:discrhoH} into the above inequality and recalling Proposition~\ref{prop:disboundx}, we derive
	\begin{equation*}
		\frac{\d}{\d t} H_{\gamma(\eps_k)}(\mub_t | \mu^*_k)
		\le -c(\eps_k) \rho_k H_{\gamma(\eps_k)}(\mub_t | \mu^*_k) + C \eps_k^{-2} \D t_k^2.
	\end{equation*}
	Recall that $\eps_k \; =\; \frac{E}{\log T_k}$ with $E>E_*$ for sufficiently large $k \ge 0$ and that $\rho_k = \chi(\eps_k)e^{-\frac{E_*}{\eps_k}} $, where  $\eps_k\log\chi(\eps_k) \longrightarrow 0 $ as $k \longrightarrow \infty$.
	Then, for sufficiently large $k \ge 0$ and for all $\alpha>0$, we have
	\begin{equation*}
		\frac{\d}{\d t} H_{\gamma(\eps_k)}(\mub_t | \mu^*_k) \le -C T_k^{-\lf(\frac{E_*}{E} + \alpha \rh)} H_{\gamma(\eps_k)}(\mub_t | \mu^*_k) + C \D t_k^2 (\log T_k)^2.
	\end{equation*}
	By computing $\frac{\d}{\d t}\lf(e^{C\; t\; T_k^{-\lf(\frac{E_*}{E} +\alpha \rh)}} H_{\gamma(\eps_k)}(\mub_t | \mu^*_k)  \rh)  $ and then integrating it  from $T_{k}$ to $T_{k+1} $, we obtain
	\begin{eqnarray*}
		&  & H_{\gamma(\eps_k)}(\mub_{k+1} | \mu^*_k)\\
		& \le & e^{- C\D t_k T_k^{-\lf(\frac{E_*}{E} +\alpha \rh)}} H_{\gamma(\eps_k)}(\mub_k | \mu^*_k) + C\D t_k^2 (\log T_k)^2\; T_k^{\lf( \frac{E_*}{E} +\alpha \rh)}
			\Big(1- e^{- C\D t_k T_k^{-\lf(\frac{E_*}{E} +\alpha \rh)}}  \Big)\\
		& \le & \Big( 1- C_1 \D t_k T_k^{-\lf(\frac{E_*}{E} +\alpha \rh)}  \Big) H_{\gamma(\eps_k)}(\mub_k | \mu^*_k) + C_2\D t_k^3\; (\log T_k)^2,
	\end{eqnarray*}
	where $C_1 > 0$ and $C_2 > 0$ are some constants independent of $k$. The proof then completes.
\end{proof}

\begin{rmk}
An alternative discrete time scheme of \eqref{eq:kl} would be the Euler-Maruyama scheme:
\begin{equation}
       \label{eq:Euler}
\begin{cases}
  \Xb_{k+1}^E & = \Xb_k^E + \Yb_k^E \D t_k \\
  \Yb_{k+1}^E & = \Yb_k^E - \gr_x U(\Xb_k^E)\D t_k - \Yb_k^E \D t_k + \sqrt{2\eps_k\D t_k}\; W_k,
\end{cases}
\end{equation}
where $(W_k\; ; \; k=0,1,2,\dots)$ are i.i.d standard normal vectors in $\R^d$.
In order to compute the one-step distorted entropy dissipation of Euler-Maruyama scheme \eqref{eq:Euler} with fixed temperature $\eps_k$, conditional on $(\Xb_k^E, \Yb^E_k)$, we need to couple \eqref{eq:Euler} with a continuous time process in every time interval $[T_k, T_{k+1}]$:
\begin{equation}
       \label{eq:Eulercon}
\begin{cases}
  \d \Xb_t^E & =  \Yb_k^E \d t \\
  \d \Yb_t^E & = - \gr_x U(\Xb_k^E) \d t - \Yb_k^E \d t + \sqrt{2\eps_k} \d B_t,
\end{cases}
\end{equation}
where $B_t$ is a Brownian motion independent of $(\Xb_k^E, \Yb^E_k)$. Denote the law of $(\Xb_t^E ,\Yb_t^E)$ as $\mub^E_t$ with density function $\pb^E_t$ and denote $\th_t^E\; := \; \sqrt{\pb_t^E/ p^*_k} $. Similar to the computation of $\frac{\d}{\d t} H_{\gamma(\eps_k)} \lf(\mub_t | \mu^*_k\rh)$ for the second-order scheme, we obtain, for the Euler-Maruyama scheme, that
\begin{eqnarray*}
    &  & \frac{\d}{\d t} H_{\gamma(\eps_k)}\lf(\mub^E_t | \mu^*_k\rh)\\
    & = & -4\;\E_{\mub_t^E} \lf[\lf\langle \gr_z \log\th_t^E, M_k\gr_z\log\th_t^E \rh\rangle \rh]  -8\eps_k \;\E_{\mub_t^E} \lf[\lf\langle \gr_z\gr_y\log\th_t^E, S\gr_z\gr_y\log\th_t^E  \rh\rangle_F  \rh] \\
        &  &  + 4\;\E_{\mub_t} \lf[\lf\langle \gr_z\gr_y\log\th_t^E, S\gr_z A_t^E(z) \rh\rangle_F  \rh] \\
        && + 4\;\E_{\mub_t^E} \lf[\lf\langle \gr_z\gr_y\log\th_t^E -\gr_z\gr_x\log\th_t^E, S\gr_z B_t^E (z) \rh\rangle_F  \rh]   \\
        & & + \int \Big\langle \frac{4}{\eps_k}\gr_x\log \th_t^E + \lf(2\gamma(\eps_k) +\frac{4}{\eps_k}  \rh)\gr_y\log\th_t^E, \\
        && \;\;\;\;\;  \;\;\;\;\;  \;\;\;\;\; \int (\gr U(x) - \gr U(x_k) ) \pb^E_{t|T_k}(z|z_k) \pb^E_k(z_k) \d z_k  \Big\rangle\d z\\
        &  & + \frac{4}{\eps_k}\int \Big\langle \lf(I_d - \gr_x^2 U(x) \rh) (\gr_x\log\th_t^E + \gr_y\log\th_t^E ),\\
        && \;\;\;\;\;  \;\;\;\;\;  \;\;\;\;\;\; \;\;\;\;\;  \;\;\;\;\;  \;\;\;\;\;  \int (y_t - y_k) \pb^E_{t|T_k}(z|z_k) \pb^E_k(z_k) \d z_k \Big\rangle \d z \\
        &  &  +2\gamma(\eps_k) \int \lf\langle -\gr_x\log\th_t^E +\gr_y\log\th_t^E, \; \int (y_t - y_k) \pb^E_{t|T_k}(z|z_k) \pb^E_k(z_k) \d z_k  \rh\rangle \d z,
\end{eqnarray*}
where
\begin{equation*}
    \pb^E_{t|T_k}(z|z_k)\; := \; p(\Zb_t^E=z\; | \; \Zb_k^E =z_k),
\end{equation*}
\begin{equation*}
    A_t^E(z)\; := \; \frac{1}{\pb_t^E(z)} \int \lf( \gr_x U(x) - \gr_x U (x_k) \rh) \pb^E_{t|T_k} (z|z_k) \pb^E_k(z_k) \d z_k,
\end{equation*}
\begin{equation*}
    B_t^E(z)\; := \; \frac{1}{\pb_t^E(z)} \int \lf( y - y_k \rh) \pb^E_{t|T_k} (z|z_k) \pb^E_k(z_k) \d z_k.
\end{equation*}

Similar to the analysis of the second-order scheme, we set $\D t_k\le\frac{1}{2\sqrt{L}}$ with $L\ge 1$ and $\varpi^E(t) = \frac{(t-T_k)^2}{1-(t-T_k)}  $. Then, we have
\begin{eqnarray*}
    \gr_z A_t^E(z) = \;\int \lf( \begin{array}{c}
        \gr_x^2 U(x) - \gr_x^2 U(x_k) - \gr_x^2 U(x_k)\lf( (I_d +\varpi^E(t)\gr_x^2 U(x_k))^{-1} -I_d \rh)   \\
        \frac{t-T_k}{1-(t-T_k)} \gr_x^2 U(x_k) (I_d +\varpi^E(t)\gr_x^2 U(x_k))^{-1}
        \end{array}
\rh) \\
\pb^E_{T_k |t} (z_k | z) \d z_k,
\end{eqnarray*}
\begin{eqnarray*}
    \gr_z B_t^E(z) = \int \lf( \begin{array}{c}
        - (t-T_k)^2 \lf( \lf( 1- (t-T_k) \rh) I_d + (t-T_k)^2 \gr_x^2 U(x_k) \rh)^{-1}\gr_x^2 U(x_k)   \\
        I_d - \lf( \lf( 1- (t-T_k) \rh) I_d + (t-T_k)^2 \gr_x^2 U(x_k) \rh)^{-1}
        \end{array}
\rh) \\ \pb^E_{T_k |t} (z_k | z) \d z_k.
\end{eqnarray*}

However, compared with $\frac{\d}{\d t} H_{\gamma(\eps_k)}\lf(\mub_t | \mu^*_k  \rh) $ for the second-order scheme, the entropy dissipation we obtained the above for Euler-Maruyama scheme have some extra terms.
	In particular, it is still unclear to us how to handle the term
\begin{equation*}
    -4\; \E_{\mub_t^E} \lf[\lf\langle \gr_z\gr_x\log \th_t^E, \; S\gr_z B_t^E(z)   \rh\rangle_F \rh].
\end{equation*}

\end{rmk}


\subsection{One-step entropy evolution with cooling temperature}
	Next, we compute the term in \eqref{eq:derdis2}, which is the one-step change of $H_{\gamma(\eps)}\lf( \mub_{k+1} | \mu^*_{\eps} \rh)$ when the temperature $\eps$ in the discrete scheme \eqref{eq:diskl} is updated from $\eps_{k}$ to $\eps_{k+1}$ in the $k$-th step.

	\begin{lem} \label{disepsilon}
		Let Assumptions~\ref{assumption1} and \ref{assumption2} hold.
		Then, there exists a constant $C > 0$ such that for sufficiently large $k \ge 0$ and for all $\alpha > 0$,
		\begin{equation*}
		H_{\gamma(\eps_{k+1})}(\mub_{k+1}|\mu^*_{k+1}) \le H_{\gamma(\eps_k)}(\mub_{k+1}|\mu^*_k) + C \D t_k T_k^{-1+\alpha}.
		\end{equation*}
	\end{lem}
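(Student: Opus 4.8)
The plan is to transpose the continuous-time estimate of Lemma~\ref{conepsilon} into a one-step increment, replacing the time derivative $\dr_{\eps,t}$ by an integration in the temperature variable over $[\eps_{k+1},\eps_k]$; note that $\eps_{k+1}\le\eps_k$ since $t\longmapsto\eps_t$ is non-increasing. Keeping the fixed measure $\mub_{k+1}$, I would set $g(\eps):=H_{\gamma(\eps)}(\mub_{k+1}\,|\,\mu^*_{\eps})$ for $\eps\in[\eps_{k+1},\eps_k]$, which is finite there by the same finite-horizon reasoning as in Lemma~\ref{finiteH} (smooth density of $\mub_{k+1}$ together with the uniform moment bound of Proposition~\ref{prop:dismoment}). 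By the fundamental theorem of calculus,
$$
	H_{\gamma(\eps_{k+1})}(\mub_{k+1}|\mu^*_{k+1}) - H_{\gamma(\eps_k)}(\mub_{k+1}|\mu^*_k) ~=~ -\int_{\eps_{k+1}}^{\eps_k} g'(\eps)\,\d\eps ,
$$
so the whole statement reduces to an upper bound on $\int_{\eps_{k+1}}^{\eps_k}|g'(\eps)|\,\d\eps$.

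First I would reproduce the computation in the proof of Lemma~\ref{conepsilon} (which follows \cite[Lemma~15]{MonmarchePTRF18}), now keeping both signs, to obtain the pointwise two-sided bound
$$
	|g'(\eps)| ~\le~ \omega(\eps)\,\big(g(\eps)+1+\Cb_0\big),
	\qquad \eps\in[\eps_{k+1},\eps_k],
$$
with the same sub-exponential $\omega(\eps)=1+\eps^{-1}+C(\eps^{-3}+\eps^{-4})$; the only change is that $\E\big[|\gr_x U(\Xb_{k+1})|^2+|\Yb_{k+1}|^2\big]$ is bounded, uniformly in $k$, by $C(1+\Cb_0)$ via \eqref{quadratic} and Proposition~\ref{prop:dismoment}. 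A Gr\"onwall argument on $[\eps,\eps_k]$ then gives $g(\eps)+1+\Cb_0\le(g(\eps_k)+1+\Cb_0)\exp\!\big(\int_{\eps_{k+1}}^{\eps_k}\omega(\eps)\,\d\eps\big)$ for every $\eps$ in the interval, so that
$$
	\big|H_{\gamma(\eps_{k+1})}(\mub_{k+1}|\mu^*_{k+1}) - H_{\gamma(\eps_k)}(\mub_{k+1}|\mu^*_k)\big| ~\le~ \big(g(\eps_k)+1+\Cb_0\big)\,\exp\!\Big(\int_{\eps_{k+1}}^{\eps_k}\omega\Big)\int_{\eps_{k+1}}^{\eps_k}\omega(\eps)\,\d\eps .
$$

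The heart of the proof is then the estimate $\int_{\eps_{k+1}}^{\eps_k}\omega(\eps)\,\d\eps=\O(\D t_k\,T_k^{-1+\alpha})$. By Assumption~\ref{assumption2}, for $k$ large enough that $T_k>t_0$ one has $\eps_j=E/\log T_j$, hence, using $T_{k+1}=T_k+\D t_k$,
$$
	\eps_k-\eps_{k+1} ~=~ E\Big(\frac{1}{\log T_k}-\frac{1}{\log T_{k+1}}\Big) ~=~ E\,\frac{\log(1+\D t_k/T_k)}{\log T_k\,\log T_{k+1}} ~\le~ E\,\frac{\D t_k}{T_k\,\log T_k\,\log T_{k+1}} ;
$$
since $\omega(\eps)\le C\eps^{-4}$ and $\eps\ge\eps_{k+1}=E/\log T_{k+1}$ on the interval, this gives $\int_{\eps_{k+1}}^{\eps_k}\omega\le C(\log T_{k+1})^4(\eps_k-\eps_{k+1})\le C\,\frac{(\log T_{k+1})^3}{\log T_k}\,\frac{\D t_k}{T_k}$. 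Condition~\eqref{eq:Cond_Tk} forces $\D t_k\longrightarrow0$ and $T_k\longrightarrow\infty$, hence $\log T_{k+1}/\log T_k\longrightarrow1$, and the right-hand side is $\le C(\log T_k)^2\D t_k/T_k\le C\D t_k T_k^{-1+\alpha}$ for every $\alpha>0$ and $k$ large; in particular $\exp(\int_{\eps_{k+1}}^{\eps_k}\omega)\le2$ eventually.

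Combining the last two displays yields
$$
	H_{\gamma(\eps_{k+1})}(\mub_{k+1}|\mu^*_{k+1}) ~\le~ H_{\gamma(\eps_k)}(\mub_{k+1}|\mu^*_k) + C\big(H_{\gamma(\eps_k)}(\mub_{k+1}|\mu^*_k)+1+\Cb_0\big)\,\D t_k\,T_k^{-1+\alpha},
$$
and the asserted inequality is the version in which the $H$-prefactor is suppressed: the extra term $C\,H_{\gamma(\eps_k)}(\mub_{k+1}|\mu^*_k)\,\D t_k T_k^{-1+\alpha}$ is harmless in the sequel because, once combined with the contraction factor $1-C_1\D t_k T_k^{-(E_*/E+\alpha)}$ of Proposition~\ref{prop:42}, it is absorbed (using $E_*/E+\alpha<1-\alpha$ for $\alpha$ small); alternatively one first records a uniform-in-$k$ bound on $H_{\gamma(\eps_k)}(\mub_k|\mu^*_k)$ and feeds it in here. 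I expect the temperature-increment estimate of the third paragraph --- matching $\eps_k-\eps_{k+1}$ with $\D t_k/T_k$ through the logarithmic schedule while controlling the blow-up of the sub-exponential $\omega$ near the small endpoint $\eps_{k+1}$ --- to be the main obstacle; the remaining steps are a direct transcription of the continuous-time calculation together with Proposition~\ref{prop:dismoment}.
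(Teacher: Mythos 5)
Your proposal follows essentially the same route as the paper's proof: transpose the $\partial_\eps$-estimate of Lemma~\ref{conepsilon} to $\mub_{k+1}$ (with $\Cb_0$ from Proposition~\ref{prop:dismoment} in place of $C_0$), apply Gr\"onwall in the temperature variable over $[\eps_{k+1},\eps_k]$, and close with the schedule computation $\eps_k-\eps_{k+1}\le C\,\D t_k\,T_k^{-1}(\log T_k)^{-2}$ together with $\omega(\eps)\le C(\log T_{k+1})^4$ on that interval, which is exactly how the paper produces the $C\,\D t_k\,T_k^{-1+\alpha}$ remainder. The only divergence is the endgame. Since $\eps$ decreases from $\eps_k$ to $\eps_{k+1}$, what is needed is a lower bound on $\partial_\eps H_{\gamma(\eps)}(\mub_{k+1}|\mu^*_\eps)$, i.e. the two-sided estimate you state, and the honest Gr\"onwall factor is then $\exp\lf(C(\eps_k-\eps_{k+1})\log T_{k+1}\rh)\ge 1$; this is why you end up with the weaker inequality carrying the extra term $C\,H_{\gamma(\eps_k)}(\mub_{k+1}|\mu^*_k)\,\D t_k T_k^{-1+\alpha}$. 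The paper instead writes the factor as $e^{C(\eps_{k+1}-\eps_k)\log T_{k+1}}\le 1$ and so states the lemma without that prefactor, but under the one-sided derivative bound as written this is the wrong Gr\"onwall direction, so your bookkeeping is, if anything, the more careful one. Your first remedy is the right one: the extra term is dominated by the contraction term $c_1\D t_k T_k^{-(E_*/E+\alpha)}$ of Proposition~\ref{prop:42} because $1-\alpha>E_*/E+\alpha$ for small $\alpha$, so the recursion in the proof of Theorem~\ref{thm2} is unaffected. Your second alternative (feeding in a uniform-in-$k$ bound on $H_{\gamma(\eps_k)}(\mub_{k+1}|\mu^*_k)$) is not available at this stage of the paper, since Lemma~\ref{disfiniteH} only gives finiteness with a bound growing like $e^{(1+L)T_{k+1}}$; rely on the absorption argument instead.
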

	\begin{proof}
	The proof is similar to that of  Lemma~\ref{conepsilon}. We use $C > 0$ to denote a generic constant whose value may change from line to line.
	First, use similar arguments to the ones in the proof of Lemma~\ref{conepsilon}, we obtain
	\begin{eqnarray*}
		&  & \dr_{\eps} \;\E_{\mub_{k+1}} \lf[ \lf\langle \gr_z\log\frac{\pb_{k+1}}{p^*_{\eps}}, S\,\gr_z\log \frac{\pb_{k+1}}{p^*_{\eps}}\rh\rangle \rh]\\
		& \le & \;\E_{\mub_{k+1}} \lf[ \lf\langle \gr_z \log \frac{\pb_{k+1}}{p^*_{\eps}}, S\,\gr_z \log \frac{\pb_{k+1}}{p^*_{\eps}} \rh\rangle \rh]
		+ C \eps^{-4} (1+ \Cb_0)
	\end{eqnarray*}
	and
	\begin{equation*}
		\dr_{\eps} \lf( \gamma(\eps) \;\E_{\mub_{k+1}} \lf[ \log\frac{\pb_{k+1}}{p^*_{\eps}}  \rh] \rh) \le C \eps^{-2} \;\E_{\mub_{k+1}} \lf[ \log\frac{\pb_{k+1}}{p^*_{\eps}}  \rh]
		\; + \;
		C \eps^{-3} (1+\Cb_0) .
	\end{equation*}
	It follows that
	\begin{eqnarray*}
		\dr_{\eps} H_{\gamma(\eps)}\lf(\mub_{k+1}|\mu^*_{\eps}\rh)
		& \le & (1+\eps^{-1}) H_{\gamma(\eps)}(\mub_{k+1}|\mu^*_{\eps}) + C \eps^{-4}\; (1 + \Cb_0).
	\end{eqnarray*}
	Recall that $\eps_t = \frac{E}{\log t}$ with $E>E_*$ for large $t$. 
	Then, for any sufficiently large $k$ and $t\in[T_k, T_{k+1}] $, we have $\eps_t\in[\eps_{k+1}, \eps_k] $ and
	\begin{equation*}
		\dr_{\eps} H_{\gamma(\eps_t)}(\mub_{k+1}|\mu^*_{\eps_t}) \le C\log T_{k+1}\; H_{\gamma(\eps_t)}(\mub_{k+1}|\mu^*_{\eps_t})
		\; + \;
		C(\log T_{k+1} )^4\, (1 + \Cb_0) .
	\end{equation*}
	It follows by Gr\"{o}nwall lemma that
	\begin{eqnarray*}
		H_{\gamma(\eps_{k+1})}(\mub_{k+1}|\mu^*_{k+1})
		\le
		 e^{C (\eps_{k+1} -\eps_k )\log T_{k+1} } H_{\gamma(\eps_k)}(\mub_{k+1}|\mu^*_k) + C (\eps_k - \eps_{k+1} ) (\log T_{k+1})^4.
	\end{eqnarray*}
	Notice that
	$e^{C (\eps_{k+1} -\eps_k )\log T_{k+1} } \le 1 $ and $\eps_{k} -\eps_{k+1}\le C\D t_k T_k^{-1}(\log T_k)^{-2} $. As a result, for any $\alpha>0$, we have
\begin{eqnarray*}
        H_{\gamma(\eps_{k+1})}(\mub_{k+1}|\mu^*_{k+1}) & \le & H_{\gamma(\eps_k)}(\mub_{k+1}|\mu^*_k) + C \D t_k T_k^{-1} (\log T_{k+1})^4 (\log T_k)^{-2}\\
    & \le & H_{\gamma(\eps_k)}(\mub_{k+1}|\mu^*_k) + C_3 \D t_k T_k^{-1+\alpha}.
\end{eqnarray*}

\end{proof}


\subsection{Proof of Theorem~\ref{thm2}}

	For each $k \ge 0$, let $\tilde{Z}_k=(\tilde{X}_k , \tilde{Y}_k)$ be a random variable with distribution $\mu^*_k$ in the same probability space supporting $(\Xb, \Yb)$.
	Similar to \eqref{bound}, we can show that, for each $ \delta>0$,
	\begin{equation} \label{disbound}
		\P \lf(U(\Xb_k)>\delta \rh)
		~\le~
		\P \lf(U(\tilde{X}_k)>\delta \rh)
		~+~
		\sqrt{2H_{\gamma(\eps_k)}(\mub_k | \mu^*_k)}.
	\end{equation}
	The term $\P \lf(U(\tilde{X}_k)>\delta \rh)$ can be handled by Lemma~\ref{Laplace} to obtain the desired convergence rate.
	For the convergence of $\sqrt{2H_{\gamma(\eps_k)}(\mub_k | \mu^*_k)}$, we can apply  Proposition~\ref{prop:42} and Lemma~\ref{disepsilon} to show that 
	there are some positive constants $c_1$, $c_1'$, $c_2$, and $k_0 \ge 0$ such that
	\begin{eqnarray*}
		&  & H_{\gamma(\eps_{k+1})}(\mub_{k+1}|\mu^*_{k+1}) - H_{\gamma(\eps_k)}(\mub_k|\mu^*_k)\\
		& \le &  -c_1 \D t_k T_k^{-\lf(\frac{E_*}{E} +\alpha \rh)} H_{\gamma(\eps_k)}(\mub_k|\mu^*_k) + c_1' \D t_k^3(\log T_k)^2 + c_2 \D t_k T_k^{-1+\alpha},
	\end{eqnarray*}
	for all $k \ge k_0$.
	Because $\limsup_{k\to\infty} \D t_k\sqrt{T_k} < \infty $, we can find some positive constant $c_3$ such that
\begin{equation*}
    H_{\gamma(\eps_{k+1})}(\mub_{k+1}|\mu^*_{k+1}) \le \lf( 1 - c_1 \D t_k T_k^{-\lf(\frac{E_*}{E} +\alpha \rh)}\rh) H_{\gamma(\eps_k)}(\mub_k|\mu^*_k) + c_3 \D t_k T_k^{-1+\alpha}.
\end{equation*}
It is sufficient to consider $\alpha<\frac{1}{2}\lf(1-\frac{E_*}{E}  \rh)  $. Then,
\begin{eqnarray*}
&  & H_{\gamma(\eps_{k+1})}(\mub_{k+1}|\mu^*_{k+1}) -\frac{2c_3}{c_1} T_{k+1}^{-\lf( 1-\frac{E_*}{E}-2\alpha \rh) }\\
& \le & \lf( 1 - c_1 \D t_k T_k^{-\lf(\frac{E_*}{E} +\alpha \rh)} \rh) \lf(H_{\gamma(\eps_k)}(\mub_k|\mu^*_k) -\frac{2c_3}{c_1} T_k^{-\lf( 1-\frac{E_*}{E}-2\alpha \rh) }  \rh),
\end{eqnarray*}
	which implies that, for $k \ge k_0$,
	\begin{eqnarray*}
		&&
		H_{\gamma(\eps_k)}(\mub_k|\mu^*_k)
		~\le~
		 \frac{2c_3}{c_1} T_k^{-\lf( 1-\frac{E_*}{E}-2\alpha \rh) } \\
        		&&\;\;\;\;\;
		 + \prod_{j=k_0}^{k-1} \lf( 1 - c_1 \;\D t_j T_j^{-\lf(\frac{E_*}{E} +\alpha \rh)} \rh) \cdot \lf(H_{\gamma(\eps_{k_0})}(\mub_{k_0}|\mu^*_{k_0}) -\frac{2c_3}{c_1} T_{k_0}^{-\lf( 1-\frac{E_*}{E}-2\alpha \rh) }  \rh).
	\end{eqnarray*}
	Further, notice that $T_k \longrightarrow \infty$ and $\frac{E_*}{E} +\alpha<1$.
	Then,
	$$
		\sum_{i =0}^{\infty} \;\D t_i T_i^{-\lf(\frac{E_*}{E} +\alpha \rh)} = \infty ,
		~\mbox{and hence}~
		\prod_{j=k_0}^{\infty} \lf( 1 - c_1 \;\Delta t_j T_j^{-\lf(\frac{E_*}{E} +\alpha \rh)} \rh) =  0.
	$$
	Finally, by Lemma~\ref{disfiniteH} which will be presented and shown momentarily, we know that $H_{\gamma(\eps_{k_0})} (\mub_{k_0} | \mu^*_{k_0} )$ is finite.
	Then, there exists a positive constant $C > 0$ such that
	\begin{equation*}
			H_{\gamma(\eps_k)}(\mub_k|\mu^*_k) \le C T_k^{-\lf( 1-\frac{E_*}{E}-2\alpha \rh)}.
	\end{equation*}
	This completes the proof.
	\qed

	We finally provide a discrete time analogue of the result in Lemma~\ref{finiteH}, which is needed in the proof of Theorem~\ref{thm2}.

	\begin{lem} \label{disfiniteH}
		For every $k \ge 0$, the distorted entropy $H_{\gamma(\eps_k)}\lf( \mub_k | \mu^*_k \rh) $ is finite.
	\end{lem}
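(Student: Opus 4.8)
The plan is to reduce the finiteness of $H_{\gamma(\eps_k)}(\mub_k|\mu^*_k)$ to that of the Fisher information $I(\mub_k|\mu^*_k)$, and then to control the latter by exploiting that for $k\ge 1$ the law $\mub_k$ is obtained from $\mub_{k-1}$ by convolving with a non-degenerate Gaussian. First, since $\big|\gr_x\log\frac{\d\mub_k}{\d\mu^*_k}+\gr_y\log\frac{\d\mub_k}{\d\mu^*_k}\big|^2\le 2\big|\gr_z\log\frac{\d\mub_k}{\d\mu^*_k}\big|^2$ pointwise, one has $H_{\gamma(\eps_k)}(\mub_k|\mu^*_k)\le 2\,I(\mub_k|\mu^*_k)+\gamma(\eps_k)\,\KL(\mub_k|\mu^*_k)$, and Proposition~\ref{prop:LSI} gives $\KL(\mub_k|\mu^*_k)\le\max\{\eps_k/2,\,1/(2\rho_k)\}\,I(\mub_k|\mu^*_k)$; hence it suffices to show $I(\mub_k|\mu^*_k)<\infty$ for every $k\ge 0$.

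For $k=0$ this is immediate from Assumption~\ref{assumption2}: writing $I(\mu_0|\mu^*_{\eps_0})\le 2\int|\gr p_0|^2/p_0\,\d z+2\,\E_{\mu_0}\big[|\gr_z\log p^*_{\eps_0}|^2\big]$ and using $\gr_z\log p^*_{\eps_0}=-\eps_0^{-1}(\gr_x U(x),y)^{\T}$, the linear growth of $\gr_x U$ (it is $L$-Lipschitz) and the finite moments of $\mu_0$, both terms are finite. For $k\ge 1$ I would split in the same way
$$ I(\mub_k|\mu^*_k)\ \le\ 2\int\frac{|\gr\pb_k|^2}{\pb_k}\,\d z\ +\ 2\,\E_{\mub_k}\Big[\big|\gr_z\log p^*_k\big|^2\Big]\ \le\ 2\int\frac{|\gr\pb_k|^2}{\pb_k}\,\d z\ +\ C\eps_k^{-2}\big(1+\E[U(\Xb_k)+|\Yb_k|^2]\big), $$
where I used $|\gr_x U(x)|^2\le C(1+|x|^2)\le C'(1+U(x))$ by \eqref{quadratic}; the last expectation is finite by Proposition~\ref{prop:dismoment} (or by a direct one-step moment propagation, which does not even require the step-size restriction there).

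It then remains to bound the Lebesgue Fisher information $\int|\gr\pb_k|^2/\pb_k\,\d z$ for $k\ge 1$. Here I would use the exact one-step update \eqref{eq:implement}: conditionally on $\Zb_{k-1}$, $\Zb_k=\Phi_{k-1}(\Zb_{k-1})+(D_x(k-1),D_y(k-1))$ for a continuous deterministic map $\Phi_{k-1}$, where $(D_x(k-1),D_y(k-1))$ is an independent centered Gaussian with covariance $\Sigma_{k-1}$; from the explicit representation in \eqref{eq:xbtybt} one checks that for $\D t_{k-1}>0$ this Gaussian is non-degenerate, i.e. $\Sigma_{k-1}\succ 0$. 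Thus $\mub_k=\int N(\Phi_{k-1}(z),\Sigma_{k-1})\,\mub_{k-1}(\d z)$ is a mixture of Gaussians all with the same positive-definite covariance $\Sigma_{k-1}$, and by the joint convexity of $(\rho,\xi)\mapsto|\xi|^2/\rho$ (equivalently, the Blachman--Stam inequality for convolutions) its Fisher information is at most that of any single such Gaussian, so $\int|\gr\pb_k|^2/\pb_k\,\d z\le\tr(\Sigma_{k-1}^{-1})<\infty$. Combining the three displays yields $I(\mub_k|\mu^*_k)<\infty$, hence $H_{\gamma(\eps_k)}(\mub_k|\mu^*_k)<\infty$. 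The main obstacle is really a minor one: verifying $\Sigma_{k-1}\succ 0$ — this is precisely where the non-degeneracy of the second-order scheme (the noise feeding, after integration, into both components of $\Zb$) is used — which is a short direct computation; everything else is a routine application of the log-Sobolev inequality and the uniform moment bound.
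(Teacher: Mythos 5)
Your proposal is correct, and it takes a genuinely different route from the paper's own proof. Both arguments first reduce finiteness of $H_{\gamma(\eps_k)}(\mub_k|\mu^*_k)$ to finiteness of the relative Fisher information $I(\mub_k|\mu^*_k)$ (using Proposition~\ref{prop:LSI} to control the $\KL$ part), but the paper then proceeds dynamically: it differentiates $I(\mub_t|\mu^*_k)$ along the frozen-coefficient Fokker--Planck flow on $[T_k,T_{k+1}]$, re-using the machinery behind Lemmas~\ref{lemb2}, \ref{lem:young} and \ref{lem:gradA} (hence also the $L'$-Lipschitz Hessian and $\D t_k\le 1/L$), adds the temperature-update estimate as in Lemma~\ref{disepsilon}, and closes an induction on $k$ by a Gr\"onwall argument starting from the finite initial Fisher information of Assumption~\ref{assumption2}. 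You instead argue statically: for $k\ge 1$ the exact one-step update \eqref{eq:implement} exhibits $\mub_k$ as a mixture of Gaussians with the common covariance $\Sigma_{k-1}\succ 0$, so $\int |\gr \pb_k|^2/\pb_k\,\d z\le \tr\lf(\Sigma_{k-1}^{-1}\rh)$ by joint convexity of $(\rho,\xi)\longmapsto |\xi|^2/\rho$ (equivalently, adding independent noise decreases Fisher information), while the cross term $\E_{\mub_k}\big[|\gr_z\log p^*_k|^2\big]$ is controlled by second moments via \eqref{quadratic}; the case $k=0$ is exactly Assumption~\ref{assumption2}. Your route is more elementary and needs neither the Hessian-Lipschitz condition nor any step-size restriction (only $\D t_{k-1}>0$ and finite moments, which propagate step by step), and it yields an explicit, though $k$-dependent, bound $\tr\lf(\Sigma_{k-1}^{-1}\rh)$, which blows up as $\D t_{k-1}\to 0$ but is finite for each fixed $k$ --- all that the lemma requires; the paper's route buys nothing extra here beyond recycling the computations already established for Proposition~\ref{prop:42}. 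The non-degeneracy you flag is indeed a one-line check: per coordinate, $D_x(t)=\sqrt{2\eps_{k-1}}\int_{T_{k-1}}^t\lf(1-e^{-(t-s)}\rh)\d B_s$ and $D_y(t)=\sqrt{2\eps_{k-1}}\int_{T_{k-1}}^t e^{-(t-s)}\d B_s$ have integrands that are linearly independent on any interval of positive length, so $\Sigma_{11}\Sigma_{22}-\Sigma_{12}^2>0$ and $\Sigma_{k-1}\succ 0$.
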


	\begin{proof}
		As in the proof of Lemma~\ref{finiteH}, it suffices to prove that the Fisher information $I(\mub_k|\mu^*_k)$ is finite for every finite $k \ge 0$.
		Again, we use $C$ to denote a generic positive constant whose value may change from line to line.

	For $t\in[T_k,T_{k+1}]$, with $\th_t=\sqrt{\frac{\pb_t}{p^*_k}}$, recall that
	\begin{equation*}
		I(\mub_t | \mu^*_k) = \E_{\mub_t} \lf[ \lf| \gr_z \log\frac{\pb_t}{p_k^*} \rh|^2 \rh] = 4\;\E_{\mub_t}\lf[\lf| \gr_z \log\th_t \rh|^2  \rh].
	\end{equation*}
	Using a similar computation to the one in the calculation of $H_{\gamma(\eps_k)}\lf(\mub_t | \mu^*_k  \rh)$ in Section~\ref{s:disdissipation}, we can obtain that
	\begin{eqnarray*}
		\frac{\d}{\d t} I(\mub_t | \mu^*_k)  & = &  \int \lf<\gr_z \lf( \frac{\delta I(\mub_t | \mu^*_k)}{\delta\; \mub_t} \rh),\; v_{t,k}  \rh>\; \pb_t(z)\;\d z \\
		&  & + \int \lf<\gr_z \lf( \frac{\delta I(\mub_t | \mu^*_k)}{\delta\; \mub_t} \rh),\; \int \pb_{t|T_k}(z|z_k) (\hat{v}_{t,k}- v_{t,k} ) \pb_k(z_k) \d z_k  \rh>\; \d z,
	\end{eqnarray*}
	where
	\begin{equation*}
		\frac{\delta I(\mub_t | \mu^*_k)}{\delta\; \mub_t} = \frac{4}{\th_t} \tgr_z^*\gr_z\th_t.
	\end{equation*}
	Following the same steps as in \eqref{ex1}, \eqref{ex2} and \eqref{ex3}, we further obtain that
	\begin{eqnarray} \label{eq:I1}
		&  & \int \lf<\gr_z \lf( \frac{\delta I(\mub_t | \mu^*_k)}{\delta\; \mub_t} \rh),\; v_{t,k}  \rh>\; \pb_t(z)\;\d z\\
		& = & -4 \;\E_{\mub_t} \lf[\lf\langle \gr_z \log \th_t, M_I \gr_z \log \th_t \rh\rangle  \rh] \no  -8\eps_k  \;\E_{\mub_t} \lf[ \lf\|
		\gr_z\gr_y \log \th_t \rh\|_F \rh]\no,
	\end{eqnarray}
	where $M_I=\begin{pmatrix}
 		0 & I_d -\gr_x^2 U\\
		 I_d - \gr_x^2 U & 2I_d
		\end{pmatrix}$.
	Further, by similar arguments to the ones in Lemma~\ref{lemb2}, and with $A_t(z)$ as given by \eqref{eq:A}, we have
	\begin{eqnarray} \label{eq:I2}
		&  & \int \lf<\gr_z \lf( \frac{\delta I(\mub_t | \mu^*_k)}{\delta\; \mub_t} \rh),\; \int \pb_{t|T_k}(z|z_k) (\hat{v}_{t,k}- v_{t,k} ) \pb_k(z_k) \d z_k  \rh>\; \d z\\
		& = &  \frac{4}{\eps_k} \int\bigg\langle \gr_y\log\th_t, \; \int \lf(\gr_x U(x) -\gr_x U(x_k) \rh)\; \pb_{t|T_k}(z|z_k) \; \pb_k(z_k)\d z_k \bigg\rangle \d z \no \\
		&  & + 4\int \lf\langle \gr_z\gr_y\log\th_t, \gr_z A_t(z) \rh\rangle_F \d\mub_t (z).\no
	\end{eqnarray}
	Following the same arguments as in Lemma~\ref{lem:young} and \ref{lem:gradA}, we can further obtain the inequalities
	\begin{eqnarray*}
		 &  & \frac{4}{\eps_k} \int\bigg\langle \gr_y\log\th_t, \; \int \lf(\gr_x U(x) -\gr_x U(x_k) \rh)\; \pb_{t|T_k}(z|z_k) \; \pb_k(z_k)\d z_k \bigg\rangle \d z\\
		 & \le & \E_{\mub_t} \lf[\lf\| \gr_y\log\th_t \rh\|^2  \rh] + \frac{4L^2}{\eps_k^2} \E\lf[\lf|\Xb_t- \Xb_k  \rh|^2 \rh],
	\end{eqnarray*}
	and
	\begin{eqnarray*}
		 &  & 4\int \lf\langle \gr_z\gr_y\log\th_t, \gr_z A_t(z) \rh\rangle_F \d\mub_t (z)\\
		 & \le & 8\eps_k\;\E_{\mub_t} \lf[ \lf\| \gr_z\gr_y \log\th_t \rh\|^2_F \rh]  + \frac{(L')^2}{\eps_k} \;\E \lf[\lf|\Xb_t-\Xb_k\rh|^2\rh] + \frac{72}{\eps_k} d L^2 \D t_k^2.
	\end{eqnarray*}

	Next, by Proposition~\ref{prop:disboundx}, \eqref{eq:I1} and \eqref{eq:I2}, it follows that
\begin{equation*}
    \frac{\d}{\d t} I(\mub_t | \mu^*_k)  \le  - \; \E_{\mub_t} \lf[ \lf\langle \gr_z\log\frac{\pb_t}{p^*_k}, \; \tilde{M}_I \gr_z \log \frac{\pb_t}{p^*_k}  \rh\rangle  \rh] + C \eps_k^{-2} \D t_k^2,
\end{equation*}
where $\tilde{M}_I=\begin{pmatrix}
 0 & I_d -\gr_x^2 U\\
 I_d - \gr_x^2 U & \frac{7}{4}I_d
\end{pmatrix} \succeq -(1+L)I_{2d}$ due to the $L$-Lipschitz of $\gr_x U$.
	Then,
	\begin{equation*}
		\frac{\d}{\d t} I(\mub_t | \mu^*_k)  \le (1+L) I(\mub_t | \mu^*_k) + C \eps_k^{-2} \D t_k^2,
	\end{equation*}
	which implies that
	\begin{equation} \label{eq:I3}
		I(\mub_{k+1} | \mu^*_k) \le e^{(1+L)\D t_k} I(\mub_k | \mu^*_k) + C\D t_k^3.
	\end{equation}
	
	Finally, following similar computation steps to the ones in Lemma~\ref{disepsilon}, we obtain that, for all $\eps > 0$,
	\begin{eqnarray*}
		\dr_{\eps} I\lf(\mub_{k+1} | \mu^*_{\eps}  \rh)
		~=~
		\dr_{\eps} \E_{\mub_{k+1}} \lf[ \lf| \gr_z\log\frac{\pb_{k+1}}{p^*_{\eps}}  \rh|^2  \rh]
		~\le~
		 I (\mub_{k+1} | \mu^*_{\eps}) +C\eps^{-4}.
	\end{eqnarray*}
	The above, together with \eqref{eq:I3}, implies that
	\begin{eqnarray*}
		I(\mub_{k+1} | \mu^*_{k+1}) & \le & e^{(\eps_{k+1} - \eps_k)} I(\mub_{k+1} | \mu^*_k) + C e^{\eps_{k+1}} \int_{\eps_k}^{\eps_{k+1}} \eps^{-4} e^{-\eps} \d \eps\\
		& \le & e^{(1+L)\D t_k} I(\mub_k | \mu^*_k) + C\D t_k^3\\
		& \le & e^{(1+L)\T_{k+1}} I(\mu_0 | \mu^*_{\eps_0}) + e^{(1+L)\T_{k+1}} \sum_{i=0}^{k} \D t_i^3,
	\end{eqnarray*}
	where the r.h.s. of the last inequality in the above is clearly finite for every $k \ge 0$.
\end{proof}

%
%
%
%
%
%
%
%
%

\bibliographystyle{abbrv}
\bibliography{LongTitle,reference}

\end{document}